\definecolor{lightgrey}{gray}{0.8}
\definecolor{sxred}{RGB}{153,0,0}
\definecolor{tocolor}{rgb}{.1,.1,.1}
\definecolor{urlcolor}{rgb}{.2,.2,.6}
\definecolor{linkcolor}{rgb}{.1,.1,.5}
\definecolor{citecolor}{rgb}{.4,.2,.1}
\definecolor{lightgrey}{rgb}{.9,.9,.9}
\newcommandx{\thdef}[2]{
	\newaliascnt{#1}{theorem}  
	\newtheorem{#1}[#1]{#2}
	\aliascntresetthe{#1}  
	\expandafter\newcommand\expandafter{\csname #1autorefname\endcsname}{#2}
}
\newtheorem{theorem}{Theorem}[section]
\theoremstyle{definition}
\theoremstyle{remark}
\newenvironment{example}
  {\pushQED{\qed}\examplex}
  {\popQED\endexamplex}
\newenvironment{remark}
  {\pushQED{\qed}\remarkx}
  {\popQED\endremarkx}
\newcommand{\weak}{{^\mathbf{V}}}
\newcommand{\an}[1]{{#1}^{\mathrm{an}}}
\newcommand{\defn}[1]{\textbf{\textit{#1}}} 
\newcommand{\CC}{\mathbb{C}}
\newcommand{\KK}{\mathbb{K}}
\newcommand{\KKx}{\mathbb{K}^\times}
\newcommand{\RR}{\mathbb{R}}
\newcommand{\QQ}{\mathbb{Q}}
\newcommand{\NN}{\mathbb{N}}
\newcommand{\ZZ}{\mathbb{Z}}
\newcommand{\AF}{\mathbb{A}}
\newcommand{\PP}{\mathbb{P}}
\newcommand{\halfspc}[1]{[0,\infty)^{#1}}
\DeclareMathOperator{\divf}{div}
\newcommand{\id}[1]{\mathrm{id}_{#1}}
\renewcommand{\ge}{\geqslant}
\newcommand{\iu}{\mathrm{i}}
\newcommand{\tipi}{2\pi\iu}
\renewcommand{\epsilon}{\varepsilon}
\newcommand{\red}{\mathrm{red}}
\newcommand{\gp}{\mathrm{gp}}
\newcommand{\set}[2]{\left\{#1\,\middle|\,#2\right\}}
\newcommand{\rbrac}[1]{\left(#1\right)}
\newcommand{\abrac}[1]{\left\langle#1\right\rangle}
\newcommand{\fibprod}{\mathop{\times}}
\newcommand{\cO}[1]{\mathcal{O}_{#1}}
\newcommand{\cOx}[1]{\cO{#1}^\times}
\newcommand{\cOlog}[1]{\cO{#1}^{\log}}
\newcommand{\cM}{\mathscr{M}}
\newcommand{\cMphan}{\mathscr{M}^{\mathrm{phan}}}
\newcommand{\cMgp}{\mathscr{M}^{\mathrm{gp}}}
\newcommand{\cL}{\mathscr{L}}
\newcommand{\Cont}[1]{\mathscr{C}^0_{#1}}
\newcommand{\Contcirc}[1]{\mathscr{C}^{0, \unitcirc}_{#1}}
\newcommand{\Cinf}[1]{\mathscr{C}^\infty_{#1}}
\newcommand{\Cinfpos}[1]{\mathscr{C}^{\infty, > 0}_{#1}}
\newcommand{\Cinfge}[1]{\mathscr{C}^{\infty, \ge 0}_{#1}}
\newcommand{\dd}{\mathrm{d}}
\newcommand{\dlog}[1]{\frac{\dd#1}{#1}}
\newcommand{\ddlog}[1]{\mathrm{dlog}(#1)}
\newcommand{\wddlog}[1]{\mathrm{dlog}\weak(#1)}
\newcommand{\cvf}[1]{\partial_{#1}}
\newcommand{\uD}{\underline{D}}
\newcommand{\uL}{\underline{L}}
\newcommand{\uE}{\underline{E}}
\newcommand{\uF}{\underline{F}}
\newcommand{\uN}{\underline{N}}
\newcommand{\uX}{\underline{X}}
\newcommand{\uY}{\underline{Y}}
\newcommand{\uZ}{\underline{Z}}
\newcommand{\uS}{\underline{S}}
\newcommand{\uphi}{\underline{\phi}}
\newcommand{\upsi}{\underline{\psi}}
\newcommand{\pt}{*}
\newcommand{\logline}{\AF^1_{\mathrm{log}}}
\newcommand{\logpt}{*_{\mathrm{log}}}
\newcommand{\LogSch}{\mathbf{LogSch}}
\newcommand{\fsLogSch}{\mathbf{LogSch}^{\mathrm{fs}}}
\newcommand{\WLogSch}{\weak\mathbf{LogSch}}
\newcommand{\fsWLogSch}{\weak\mathbf{LogSch}^{\mathrm{fs}}}
\newcommand{\smWLogSch}{\weak\mathbf{LogSch}^{\mathrm{sm}}}
\newcommand{\diffable}{differentiable}
\newcommand{\mapdef}[5]{\begin{array}{ccccc}
#1 &:& #2 &\rightarrow& #3 \\
&& #4 &\mapsto& #5
\end{array}}
\newcommand{\Sig}{\Sigma}
\newcommand{\tb}[2][]{T_{#1}#2}
\newcommand{\KN}[2][]{\mathrm{KN}_{#1}(#2)}
\newcommand{\Spec}[2][]{\mathsf{Spec}_{#1}(#2)}
\newcommand{\rsect}[1]{\mathsf{R\Gamma}(#1)}
\newcommand{\sect}[1]{\mathsf{\Gamma}(#1)}
\newcommand{\coH}[2][\bullet]{\mathsf{H}^{#1}\rbrac{#2}}
\newcommand{\HdR}[2][\bullet]{\mathsf{H}_{\mathrm{dR}}^{#1}(#2)}
\newcommand{\HB}[2][\bullet]{\mathsf{H}_{\mathrm{B}}^{#1}(#2)}
\newcommand{\forms}[2][\bullet]{\Omega^{#1}_{#2}}
\newcommand{\wforms}[2][\bullet]{{^\mathbf{V}}\Omega^{#1}_{#2}}
\newcommand{\Hom}[2][]{\mathsf{Hom}_{#1}\rbrac{#2}}
\newcommand{\Gm}{\mathbb{G}_m}
\newcommand{\Ga}{\mathbb{G}_a}
\newcommand{\unitcirc}{\mathsf{S}^{1}}
\newcommand{\M}[2][]{\mathfrak{M}_{#2}^{#1}}
\newcommand{\bM}[2][]{\overline{\mathfrak{M}}_{#2}^{#1}}
\newcommand{\bbM}[2][]{\partial_{#1}\overline{\mathfrak{M}}_{#2}}
\newcommand{\Conf}{\mathsf{Conf}}
\newcommand{\Crv}[1]{\mathfrak{X}_{#1}}
\newcommand{\Tinf}[2][\infty]{T_{#1}\Crv{#2}}
\newcommand{\Tinfx}[2][\infty]{T^\times_{#1}\Crv{#2}}
\newcommand{\FM}[1]{\mathfrak{FM}_{#1}}
\newcommand{\FMfr}[1]{\FM{#1}^{\mathrm{fr}}}
\newcommand{\FMtop}[1]{FM_{#1}}
\title{Logarithmic morphisms, tangential basepoints, and little disks}
\author{Cl\'ement Dupont}
\address{Institut Montpelli\'erain Alexander Grothendieck, Universit\'e de Montpellier, CNRS,
Montpellier, France}
\email{clement.dupont@umontpellier.fr}
\author{Erik Panzer}
\address{Mathematical Institute, University of Oxford, UK}
\email{erik.panzer@maths.ox.ac.uk}
\author{Brent Pym}
\address{McGill University, Montr\'eal, Canada}
\email{brent.pym@mcgill.ca}
\begin{document}

\begin{abstract}
We develop the theory of ``virtual morphisms'' in logarithmic algebraic geometry, introduced by Howell.  It allows one to give algebro-geometric meaning to various useful maps of topological spaces that do not correspond to morphisms of (log) schemes in the classical sense, while retaining functoriality of key constructions.   In particular, we explain how virtual morphisms provide a natural categorical home for Deligne's theory of tangential basepoints: the latter are simply the virtual morphisms from a point. We also extend Howell's results on the functoriality of Betti and de Rham cohomology.  

Using this framework, we lift the topological operad of little $2$-disks to an operad in log schemes over the integers, whose virtual points are isomorphism classes of stable marked curves of genus zero equipped with a tangential basepoint. The gluing of such curves along marked points is performed using virtual morphisms that transport tangential basepoints around the curves. This builds on Vaintrob's analogous construction for framed little disks, for which the classical notion of morphism in logarithmic geometry sufficed. In this way, we obtain a direct algebro-geometric proof of the formality of the little disks operad, following the strategy envisioned by Beilinson. Furthermore, Bar-Natan's parenthesized braids naturally appear as the fundamental groupoids of our moduli spaces, with all virtual basepoints defined over the integers.
\end{abstract}

\maketitle

\setcounter{tocdepth}{1}
\tableofcontents

\section{Introduction}

\subsection{Overview}
In interactions between algebraic geometry  and other areas, such as topology, number theory and mathematical physics, one often encounters situations where one would like to ``compactify'' an algebraic variety $U$ by adding ``points at infinity'', without changing the homotopy type.  The problem is that if we try to compactify $U$ as a variety, we are forced to ``fill in the punctures'', which changes the topology in a drastic way (e.g.~the complex plane becomes a sphere).

Deligne \cite{Deligne1989} realized that points at infinity should instead be treated by choosing a regular compactification $X$ of $U$ whose boundary $D:=X\setminus U$ is a normal crossing divisor, and decorating a point of $X$ with a tangent vector pointing outwards from $D$. One can think of such ``tangential basepoints'' as analogous to points in a punctured tubular neighbourhood of $D$ in the classical topology. Correspondingly, Deligne proves that tangential basepoints give rise to fibre functors for various Tannakian categories of local systems, vector bundles with integrable connection, etc.\ on $U$. 

The need for tangential basepoints and their generalizations can be recognized in many contexts, e.g.:
\begin{itemize}
    \item To define the limit of the (co)homology groups of a family of varieties $Y_t$, $t \in \AF^1\setminus\{0\}$, one needs to choose a direction to approach $t \to 0$, which is naturally indicated by a tangential basepoint at $t=0$.
    \item In arithmetic geometry, one often considers schemes $U$ that have no rational (or integral) points, like $\mathbb{P}^1\setminus \{0,1,\infty\}$ over $\ZZ$, or have none that are compatible with symmetries or other structure. For instance, this is the case for the collection of moduli spaces of marked curves $\M{g,n}$, for which Grothendieck had already advertised the use of basepoints ``at infinity'' in what later became known as Grothendieck--Teichm\"{u}ller theory 
\cite{GrothendieckEsquisse}.
\item  In anabelian geometry, a variant of Grothendieck's section conjecture predicts that tangential basepoints classify sections of the map from the \'{e}tale fundamental group of a hyperbolic curve to the absolute Galois group \cite{StixSection}.
\item In mathematical physics, e.g.~in the study of Feynman integrals, it is common to encounter logarithmically divergent integrals that need to be assigned finite values by a process of ``regularization''. This requires a choice of scale near the boundary of the integration domain, which as we explain in \cite{DPP:Integrals}, is naturally encoded in a family of tangential basepoints. 
\item Various spaces and maps arising in quantum algebra and topology look like they should come from algebraic geometry, if only one could allow the maps to take values at infinity. A key example is the topological operad of little 2-disks.  The space of $n$ little disks embedded in a bigger one is homotopy equivalent to the configuration space of $n$ distinct points in $\CC$, which is an algebraic variety.  However, the topological operation of gluing smaller disks into bigger ones is not expressible by a map of varieties.  In this paper, building on work of Vaintrob~\cite{Vaintrob2021}, we will explain how one can nevertheless realize the little disks operad algebro-geometrically---even over $\Spec{\ZZ}$---as a ``virtual logarithmic moduli space of genus zero curves with a tangential basepoint'', yielding a direct algebro-geometric proof of its formality, following a strategy of Beilinson~\cite{BeilinsonLetterToKontsevich}. The question of the algebro-geometric nature of the little disks operad was also raised independently by Morava \cite[appendix]{morava}.
\end{itemize}

 The core difficulty in dealing with tangential basepoints is the need to carry around the extra data of the tangent vectors, taking care to ensure that all constructions are compatible with them.  On a case-by-case basis, this is not a major problem, but when many varieties/schemes start to interact via nontrivial morphisms, as in many of the examples above, it becomes more cumbersome. For instance, to construct certain cases of motivic fundamental groups with tangential basepoints, Deligne--Goncharov~\cite{DeligneGoncharov} first introduce extra unnatural basepoints in the interior and must later eliminate them using transcendental techniques, to obtain the desired object.

The underlying conceptual issue is ultimately that tangential basepoints are not truly basepoints in the categorical sense, i.e.~maps from the terminal object $* = \Spec{\KK}$ in the category of schemes over $\KK$. The purpose of this paper is to explain how logarithmic algebraic geometry can be used to solve this problem. However, this requires an addition to the classical theory of log schemes of Fontaine--Illusie and Kato \cite{IllusieLogSpaces, Kato1989a}, which capitalizes on the following remarkable fact:
\begin{quote}
    \textbf{Many constructions in log geometry are functorial for a broader class of morphisms.}
\end{quote}
Namely, following ideas of Howell~\cite{Howell2017} (a variant of which we discovered later but independently in \cite{DPP:Integrals}), we will describe a category of log schemes and ``virtual morphisms'', a natural variant of the classical notion of morphisms. In this category, tangential basepoints are truly basepoints in the categorical sense:
\begin{quote}
    \textbf{Tangential basepoints are simply virtual morphisms from a point.}
\end{quote}
We also have a functorial theory of Betti and de Rham cohomology, facilitating the treatment of the scenarios above.

\subsection{Virtual morphisms in log geometry}

The first step towards realizing tangential basepoints is to enlarge the category of schemes to include more general objects that encode compactifications and degenerations thereof.  Logarithmic geometry, in the sense of Fontaine--Illusie and Kato~\cite{IllusieLogSpaces,Kato1989a}, is a natural such context.  Recall that a logarithmic scheme is a scheme equipped with a factorization of the natural inclusion $\cOx{}\hookrightarrow \cO{}$ of the invertible functions through an intermediate sheaf of (multiplicative) monoids $\cM$.  A compactification $X \supset U$ with divisorial boundary carries a natural such structure, where $\cM$ is the sheaf of regular functions on $X$ that are invertible on $U$.  The open $U$ is then recovered from $(X,\cM)$ as the locus where $\cM=\cOx{}$, so that the logarithmic structure ``remembers'' the fact that $X$ was obtained by compactifying $U$.  This viewpoint is amplified by Kato--Nakayama's construction~\cite{KatoNakayama} of a topological space associated to fs log schemes over $\CC$, which when applied to a compactifying log scheme as above,  produces a compactification of the complex points $U(\CC)$ that preserves the homotopy type (in the analytic topology). 
 
 However, enlarging the class of objects in this way does not fully solve the problem of tangential basepoints.  The reason is that if we use the standard notion of morphisms between log schemes, i.e.~morphisms of schemes and monoids respecting both maps in the factorization $\cOx{}\to\cM\to\cO{}$, then interiors of compactifications are preserved.  Thus, in particular, a categorical basepoint in a logarithmic compactification is forced to be an ordinary basepoint in $U$ itself.  In other words, there are not enough morphisms in the usual category of log schemes to capture the notion of a tangential basepoint.

There are also other situations where one may wish to find additional morphisms in logarithmic geometry.  For instance, if $L \to U$ is a line bundle that extends to a line bundle $\widetilde{L} \to X$ on a compactification $X \supset U$, then there is a log structure on $X$ whose associated topological space has the homotopy type of the unit circle bundle of $L$. However, the isomorphism class of this log structure depends on the isomorphism class of the extension $\widetilde{L}$. In particular, it can happen that $L$ is trivial, but the log structure on $X$ is not, so that the latter does not fully reflect the simplicity of the geometry of $L$.

A crucial observation, which is a central theme of Howell's PhD thesis~\cite{Howell2017}, is that many constructions in logarithmic geometry---particularly those with a cohomological flavour---actually only use the map $\cOx{} \to \cMgp$, where $\cMgp$ is the group completion of the monoid $\cM$; Howell calls this map the associated ``virtual log structure''.  It is therefore profitable to define an alternative notion of morphism between log schemes, by requiring only compatibility with $\cOx{} \to \cMgp$ and dropping compatibility with $\cM{} \to \cO{}$.   In this way, we obtain a category $\WLogSch$ of log schemes and ``virtual'' morphisms, which contains the classical category of schemes as a full subcategory (i.e.~the morphisms between ordinary schemes are unchanged), and a functor $\LogSch \to \WLogSch$ which is faithful on the subcategory of log schemes whose sheaf of monoids is integral (i.e.~an ordinary morphism is determined by the corresponding virtual morphism whenever the map $\cM\to\cMgp$ is injective). We record some basic properties of this category in \autoref{sec:virtual-mors}, building on the results in Howell's thesis.  We include many concrete examples to illustrate the subtle differences between virtual morphisms and ordinary ones.

\subsection{Tangential basepoints as virtual morphisms}
In \autoref{sec:tangential-basepoints}, we unpack the definition of virtual morphism in the case of the log structure associated to a strict normal crossing divisor.  This yields the following result, which justifies the slogan that ``tangential basepoints are just points'':

\begin{theorem}
Let $X$ be the log scheme associated to a strict normal crossing divisor $\uD$ in a smooth variety $\uX$ over a field $\KK$. Let $*$ denote the point $\Spec{\KK}$ equipped with the trivial log structure. 
 Then virtual morphisms $* \to X$ of log schemes over $\KK$  are in natural bijection with tangential basepoints for the pair $(\uX,\uD)$ in the sense of Deligne.
\end{theorem}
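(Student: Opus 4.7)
The plan is to unwind the definition of a virtual morphism $* \to X$ explicitly in terms of the local charts coming from the strict normal crossing divisor, and then match the resulting data with Deligne's definition of a tangential basepoint.

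\textbf{Step 1 (Reduction to stalks).} A virtual morphism $* \to X$ over $\KK$ consists of a morphism of schemes $\upsi \colon \Spec{\KK} \to \uX$, picking out a $\KK$-rational point $p \in \uX(\KK)$, together with a morphism of sheaves $\cMgp_X \to \upsi_*\cOx{*} = \upsi_* \KK^\times$ compatible with the factorization through $\cOx{\uX}$. Since $\upsi_*$ is supported at $p$, this datum is equivalently a homomorphism $\phi \colon \cMgp_{X,p} \to \KK^\times$ of abelian groups whose restriction to $\cOx{\uX,p}$ is evaluation at $p$. So the theorem reduces to a local statement at each $\KK$-point.

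\textbf{Step 2 (Local structure of $\cMgp$).} Since $\uD$ has strict normal crossings, after possibly shrinking to a Zariski neighbourhood of $p$ we may label the irreducible components passing through $p$ as $\uD_1, \dots, \uD_k$, with local defining equations $f_1, \dots, f_k$ extending to a regular system of parameters. The local sections of $\cM$ around $p$ are then of the form $u \cdot f_1^{a_1} \cdots f_k^{a_k}$ with $u \in \cOx{\uX,p}$ and $a_i \in \NN$ (components of $\uD$ not passing through $p$ are already invertible at $p$ and contribute nothing). Group completing gives a split short exact sequence
\begin{equation*}
1 \to \cOx{\uX,p} \to \cMgp_{X,p} \to \ZZ^k \to 0,
\end{equation*}
with the splitting $\ZZ^k \to \cMgp_{X,p}$ depending on the choice of the $f_i$. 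Therefore, extending the evaluation map on $\cOx{\uX,p}$ to a homomorphism $\phi \colon \cMgp_{X,p} \to \KK^\times$ amounts to specifying, for each $i$, a value $v_i := \phi(f_i) \in \KK^\times$.

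\textbf{Step 3 (From values to normal vectors).} The only subtlety is the dependence on the choice of defining equations. If we replace $f_i$ by $u_i f_i$ with $u_i \in \cOx{\uX,p}$, then any extending $\phi$ must satisfy $\phi(u_i f_i) = u_i(p) \phi(f_i)$, so the tuple $(v_i)$ transforms by $v_i \mapsto u_i(p) v_i$. The class $[f_i] \in \fm_{\uX,p}/(\fm_{\uX,p}^2 + (f_j : j \neq i))$ is a basis of the conormal line $(\conormal{\uD_i})_p$, and transforms by the same rule. Hence the assignment $f_i \mapsto v_i$ is precisely the datum of a linear functional $t_i \in (\conormal{\uD_i})_p^\vee = (\normal{\uD_i})_p$, determined by $t_i(f_i) = v_i$, and the constraint $v_i \in \KK^\times$ corresponds exactly to $t_i \neq 0$.

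\textbf{Step 4 (Bijection with Deligne's definition).} Putting this together, the virtual morphism is equivalent to the data of a point $p \in \uX(\KK)$ together with, for each irreducible component $\uD_i$ passing through $p$, a nonzero normal vector $t_i \in (\normal{\uD_i})_p$. This is exactly Deligne's definition of a tangential basepoint for $(\uX,\uD)$, so the assignment $\phi \mapsto (p,(t_i)_i)$ is the desired bijection. The main (and essentially the only) point requiring care is the naturality in Step~3: checking that the local identification between values $v_i$ and normal vectors $t_i$ is independent of all auxiliary choices of defining equations, and glues across charts when $p$ is not a closed point or lies on multiple components; this is a routine verification once the transformation law is in hand, but it is where the $\cMgp$-formalism pays off, since it automatically encodes the correct cocycle.
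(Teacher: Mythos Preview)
Your proof is correct and follows essentially the same approach as the paper: reduce to the data at a single $\KK$-point, identify the stalk $\cMgp_{X,p}$ (equivalently, the pullback log structure) as $\cOx{\uX,p}$ times a free abelian group on local equations for the branches, and then verify coordinate-independence via the transformation law under $f_i \mapsto u_i f_i$. The paper packages the last step slightly differently---it recasts the pullback log structure as the monoid $M_x$ of monomial functions on the normal space $N_x$, so that the bijection with tangential basepoints becomes tautological---but the content is the same as your Step~3.
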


We thus propose the concept of virtual morphisms from a point, or simply ``virtual points'' as a natural notion of tangential basepoints in more general settings, e.g.~if $\KK$ is replaced by a more general base ring (such as $\ZZ$), or the smoothness hypotheses on $(\uX,\uD)$ are relaxed.  We give some examples of these scenarios in \autoref{sec:general-basepoints}.

\subsection{Virtual morphisms and cohomology}
The category $\WLogSch$ of virtual morphisms is also well behaved with respect to basic cohomological invariants of log schemes, generalizing the (singular) Betti and (algebraic) de Rham cohomology of varieties.  Here, we restrict our attention to the classes of log schemes considered by Kato--Nakayama in \cite{KatoNakayama}, for which these cohomology theories are defined.  These give full subcategories
\[
\smWLogSch_\KK \subset \fsWLogSch_\KK \subset \WLogSch_\KK
\]
consisting of log schemes over a field $\KK$ satisfying certain  smoothness and finiteness conditions (``ideally smooth'' and ``fs'', respectively).  Kato--Nakayama associate to such log schemes a topological space $\KN{X}$ (when $\KK=\CC$) whose singular cohomology defines the Betti cohomology $\HB{X}$, and an algebraic de Rham complex $\forms{X}$ (for any $\KK$ of characteristic zero), whose hypercohomology defines the de Rham cohomology $\HdR{X}$.  They prove that these are naturally isomorphic as functors on the category of ideally smooth log schemes and ordinary morphisms.

Howell~\cite[\S3.3]{Howell2017} explained that the Kato--Nakayama space, and in some cases the forms, are functorial for virtual morphisms as well.  We review and extend these results in \autoref{sec:Betti} through \autoref{sec:comparison}.

In particular, on the Betti side, the Kato--Nakayama space also has a log structure, now in the $C^\infty$ context of ``positive log differentiable spaces'' as developed by Gillam--Molcho~\cite{GillamMolcho}.  We prove that the positive log structure is also functorial for virtual morphisms, a fact we exploit in \cite{DPP:Integrals} to give cohomological meaning to logarithmically divergent period integrals on algebraic varieties. 

Meanwhile, on the de Rham side, we prove that the log de Rham complex $\forms{}$ is functorial for virtual morphisms $X \to Y$, provided that the scheme underlying $X$ is reduced;  since de Rham cohomology is invariant under nilpotent extensions, its functoriality in general follows.  Howell had proven a similar result under the stronger assumption of integrality. Our proof is similar, but invokes a simple ``continuity principle'' (\autoref{lem:continuity-principle}) that allows one to make arguments involving the map $\cM \to \cO{}$ from knowledge of $\cOx{} \hookrightarrow \cMgp$, by restricting to the locus where a given function is invertible. 

In summary, we have the following extension of the results in \cite[\S3.3]{Howell2017}:
\begin{theorem}\label{thm:BdR}
The following statements hold.
\begin{enumerate}
    \item\label{it:Betti} For $\KK=\CC$, the construction of the positive log differentiable space $\KN{X}$ defines a symmetric monoidal functor on $\fsWLogSch_\CC$ and hence so does the Betti cohomology $\HB{X} := \coH{\KN{X};\ZZ}$.
    \item\label{it:deRham} For any field $\KK$ of characteristic zero, the construction of the logarithmic de Rham cohomology $\HdR{X}$  extends to a symmetric monoidal functor on $\smWLogSch_\KK$.  
    \item\label{it:comp} For $\KK=\CC$, the comparison isomorphism $\HB{X} \otimes \CC \cong \HdR{X}$ of Kato--Nakayama is natural for virtual morphisms, i.e.~it is an isomorphism of symmetric monoidal functors on $\smWLogSch_\CC$.
\end{enumerate}
\end{theorem}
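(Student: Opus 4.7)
The plan is to observe that in each of the three constructions---the Kato--Nakayama space, the log de Rham complex, and their comparison---the essential data is built only from the map $\cOx{} \to \cMgp$ defining a virtual log structure, rather than from the full data $\cOx{} \hookrightarrow \cM \to \cO{}$, and to handle the residual appearances of $\cM$ via the reducedness/continuity argument encoded in \autoref{lem:continuity-principle}. For part \eqref{it:Betti}, the points of $\KN{X}$ are pairs $(x, \sigma)$ with $x \in \uX(\CC)$ and $\sigma \colon \cMgp_x \to \unitcirc$ a group homomorphism restricting to $u \mapsto u(x)/|u(x)|$ on $\cOx{}_x$, so any virtual morphism $f$ induces a continuous map $\KN{f} \colon \KN{X} \to \KN{Y}$. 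The positive log structure on $\KN{X}$, although \emph{a priori} phrased via $\cM$, is determined by $\sigma$ together with the radial coordinate on $\CC^\times$, both of which are visibly functorial for virtual morphisms. Compatibility with fibre products is inherited from the classical case, and composing with $\coH{-;\ZZ}$ yields the symmetric monoidal functor asserted in part \eqref{it:Betti}.

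Part \eqref{it:deRham} is the main content. The log de Rham complex $\forms{X}$ is generated by $\forms[1]{\uX}$ together with formal symbols $\ddlog{m}$ for $m \in \cM_X$, subject to the relation $\dd(a(m)) = a(m) \cdot \ddlog{m}$, where $a \colon \cM_X \to \cO{X}$ is the structure map. Because $\ddlog{-}$ factors through $\cMgp_X$, the natural candidate for a pullback along a virtual morphism $f \colon X \to Y$ sets $f^* \ddlog{n} := \ddlog{f^\sharp n}$ for $n \in \cMgp_Y$ and uses the ordinary pullback on $\forms[1]{\uX}$. The main obstacle is to verify that this assignment respects the defining relation: for $m \in \cM_Y$, one needs to establish
\[
\uf^* \dd(a(m)) \;=\; \uf^*(a(m)) \cdot \ddlog{f^\sharp m}
\]
in $\forms[1]{X}$, even though $f^\sharp m$ lies \emph{a priori} only in $\cMgp_X$ and has no canonical image in $\cO{X}$.

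The resolution will be provided by the continuity principle \autoref{lem:continuity-principle}. On the open $U \subseteq \uY$ where $a(m)$ is invertible, the element $m$ already lies in $\cOx{}_Y|_U$, so $f^\sharp m \in \cOx{}_X$ over $\uf^{-1}(U)$ and coincides there with $\uf^\sharp(a(m))$; the required identity then reduces on this dense open to the tautology $\dd(\uf^\sharp a(m)) = \dd(\uf^\sharp m)$. When $X$ is reduced, the continuity principle propagates the equality from $\uf^{-1}(U)$ to the whole of $X$, producing the desired morphism of log de Rham complexes. The general, possibly non-reduced, case then follows by the invariance of $\HdR{-}$ under nilpotent thickenings, and symmetric monoidality is inherited from the K\"unneth decomposition $\forms{X \times Y} \cong \forms{X} \boxtimes \forms{Y}$, whose proof uses only the group completions of the respective monoid sheaves.

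For part \eqref{it:comp}, the Kato--Nakayama comparison quasi-isomorphism is built sheaf-locally by sending $\ddlog{m}$ to a canonical smooth form on $\KN{X}$ constructed from the argument homomorphism $\sigma$ and the canonical logarithm of the radial coordinate. Since both sides of the comparison are now symmetric monoidal functors on $\smWLogSch_\CC$ by parts \eqref{it:Betti} and \eqref{it:deRham}, and since the comparison map itself is constructed from the data $\cOx{} \to \cMgp$ shared by the two sides, its naturality for virtual morphisms follows from the same continuity/reducedness argument employed in part \eqref{it:deRham}; monoidality is inherited from the classical statement via the K\"unneth formulas on both sides.
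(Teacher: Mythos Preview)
Your overall strategy matches the paper's: the Kato--Nakayama space is functorial for virtual morphisms because its definition uses only $\cOx{}\to\cMgp$; differential forms are handled by a reducedness/continuity argument; non-reduced cases are absorbed via nil-invariance of $\HdR{-}$; and the comparison follows since Kato--Nakayama's sheaf $\cOlog{}$ of formal logarithms (which you allude to) is again built only from $\cOx{}\to\cMgp$.

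However, your differentiability argument in part \eqref{it:deRham} has a genuine gap. You write that the relation holds on the preimage $\uf^{-1}(U)$ of the invertibility locus of $a(m)$, and that ``the continuity principle propagates the equality from $\uf^{-1}(U)$ to the whole of $X$''. This is not what \autoref{lem:continuity-principle} says, and as stated the argument is incomplete for two reasons. First, $\uf^{-1}(U)$ need not be dense: an entire irreducible component of $\uX$ can map into the zero locus of $a(m)$, in which case there is nothing to propagate from. The paper's \autoref{lem:differentiability} handles this by working one component at a time and invoking the dichotomy of \autoref{lem:continuity-principle}: at the generic point $\eta$, either $\uphi^*\alpha_Y(f)=0$ identically (whence both sides of the relation vanish), or $f$ becomes a unit near $\uphi(\eta)$ and your tautology applies. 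You must treat the zero case explicitly. Second, even when $\uf^{-1}(U)$ is dense, passing from an equality on a dense open to an equality on all of $X$ requires that $\forms[1]{X}$ have no sections supported on proper closed subsets; this is where the hypothesis that $\forms[1]{X}$ is locally free (part of ideal smoothness) enters, and you should state it.

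A smaller point: the nil-invariance you invoke to pass from the reduced to the general case is itself proved in the paper (\autoref{lem:nil-invariance}) via the Betti--de Rham comparison, so one has to be slightly careful about circularity; the paper notes that only naturality for the \emph{ordinary} morphism $X_\red\hookrightarrow X$ is used there, so no circularity arises. It would strengthen your write-up to acknowledge this.
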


\subsection{Virtual morphisms and regularized pullbacks}
In \autoref{sec:strata} we explain how the functoriality of cohomology plays out for an important class of virtual morphisms, related to inclusions of strata in normal crossing divisors.  We show that the induced maps on cohomology have a purely classical description as the composition of specialization maps for punctured tubular neighbourhoods, and pullbacks along sections of punctured normal bundles. On the level of de Rham cohomology this amounts to a ``regularized pullback'' in which logarithmic poles $\dlog{z}$ are formally set to zero on the divisor $z=0$ via a consistent choice of tangential basepoints.  It follows immediately that  the periods obtained by pairing relative Betti cycles and de Rham cocycles of such morphisms can be expressed explicitly as classical periods in the sense of Kontsevich--Zagier~\cite{KontsevichZagier}.

\subsection{Virtual morphisms and little disks}
Finally, in \autoref{sec:little-disks}, we apply the formalism to give an algebro-geometric construction of the little disks operad via moduli spaces of curves with tangential basepoints, as suggested by Beilinson \cite{BeilinsonLetterToKontsevich}. It is modelled on an analogous construction of Vaintrob~\cite{Vaintrob2021} for the \emph{framed} little disks operad, which uses \emph{ordinary} morphisms.  However, as observed in \S4 of \emph{op.~cit.}, such morphisms cannot be used in the unframed case. Thus, our construction uses virtual morphisms in an essential way.

Namely, we construct log schemes $\FM{n}$, $n \ge 2$, lying over the moduli spaces $\bM{0,n+1}$ of stable marked curves of genus zero.  The virtual points of $\FM{n}$ are in bijection with stable marked curves of genus zero endowed with a tangential basepoint at one of the markings, and a choice of first-order smoothing of every node, in the spirit of Kimura--Stasheff--Voronov \cite{Kimura1995}. We explain how to lift the classical maps $\bM{0,n+1} \times \bM{0,m+1}\to\bM{0,n+m+1}$ that glue marked curves to virtual morphisms $\FM{n}\times\FM{m}\to\FM{n+m-1}$ that are defined via a canonical recipe for transporting nonzero tangent vectors between different points on a genus zero curve.  The result is the following:
\begin{theorem}
    The spaces $\FM{n}$, $n \ge 2$ assemble into an operad in the category $\smWLogSch_\ZZ$, whose operad of Kato--Nakayama spaces is isomorphic to the Fulton--MacPherson model $\FMtop{}$ of the little disks operad. 
\end{theorem}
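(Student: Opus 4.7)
The plan is to construct the operadic composition maps $\circ_i \colon \FM{n} \times \FM{m} \to \FM{n+m-1}$ in the category $\smWLogSch_\ZZ$ that lift the classical clutching maps between Deligne--Mumford compactifications $\bM{0,\bullet}$, verify the operad axioms, and then apply the Kato--Nakayama functor to compare with $\FMtop{}$. On virtual points, the composition glues two stable marked curves of genus zero at a chosen pair of markings to form a new nodal curve, whose new node must be equipped with a first-order smoothing: this smoothing is produced by declaring the tensor product of the tangent vectors at the two identified markings to be a fixed canonical generator, which is well-defined on a rational curve up to the usual rescaling inherent in the moduli problem. Only one of the two source curves carries a genuine tangent vector at the glued marking, the other being replaced by an output tangent line, so this tensor-product rule does not lift to a morphism of log structures $\cM \to \cO{}$; it does however produce a well-defined map of virtual log structures $\cOx{} \to \cMgp$, which is precisely why one must work in $\WLogSch$.

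The operad axioms then follow from pointwise verifications on virtual points together with the continuity principle (\autoref{lem:continuity-principle}). Equivariance under the symmetric group is immediate from the equivariance of $\bM{0,n+1}$ combined with the symmetry of the tensor product in the smoothing rule. Associativity $(a \circ_i b) \circ_j c = a \circ_i (b \circ_j c)$ reduces to the classical associativity of curve gluing on the scheme-theoretic level, together with associativity of the tensor-product smoothing rule across sequences of nodes on a rational tail. To promote these pointwise identities to equalities of virtual morphisms in $\smWLogSch_\ZZ$, one verifies them on the dense open locus of smooth curves in the products $\FM{n}\times\FM{m}$, where the problem reduces to the classical one on $\bM{0,\bullet}$, and then extends the equality over the boundary strata using the continuity principle, which controls virtual morphisms by their behaviour after restriction to the invertible locus of the relevant functions.

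For the comparison with the topological Fulton--MacPherson operad, we apply \autoref{thm:BdR}(\ref{it:Betti}), which gives a symmetric monoidal functor from $\fsWLogSch_\CC$ to topological spaces, hence sending our operad in log schemes to a topological operad. To identify this with $\FMtop{}$, we build on Vaintrob's computation that the Kato--Nakayama space of $\bM{0,n+1}$ with its divisorial log structure is homeomorphic to the framed Fulton--MacPherson space $\FMfrtop{n}$, with boundary strata realized as principal torus bundles recording the relative phases at nodes. The additional data of a tangential basepoint at the distinguished marking, encoded virtually in $\FM{n}$, rigidifies the framing at that marking, effectively quotienting by one circle factor and producing $\FMtop{n}$ as the unframed model. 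The main obstacle, and the payoff of working with virtual morphisms, is checking that the operadic gluing maps match on Kato--Nakayama spaces: each newly created node must contribute precisely the circle factor that appears in the spherical blow-up defining $\FMtop{}$, and this happens because the tangent-transport datum lives naturally in $\cMgp$ rather than being rigidified by a section of $\cM \to \cO{}$, which would force a trivialized circle bundle and destroy the comparison with $\FMtop{n}$ in the unframed regime.
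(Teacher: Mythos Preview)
Your outline has the right overall shape, but it is missing the central construction that makes the operad structure on $\FM{}$ work, and your proposed use of the continuity principle does not do what you need.

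The crucial gap is in your description of the composition map. You say the smoothing at the new node is ``produced by declaring the tensor product of the tangent vectors at the two identified markings to be a fixed canonical generator''. But in $\FM{A\sqcup\{\nu\}}$ there is \emph{no} tangent vector at the marking $\nu$: the only line bundle in the Deligne--Faltings datum is $T_\infty\Crv{A\sqcup\{\nu\}}$. To create the gluing datum at the new node you must \emph{transport} the tangential basepoint from $\infty$ to $\nu$ along the first curve. This is the role of the universal transport map $s^{A\sqcup\{\nu\}}_{\infty,\nu}$ (a virtual isomorphism $T_\infty\Crv{}\to T_\nu\Crv{}$ built from the canonical pairing $s_{p,q}\in T_p^\vee C\otimes T_q^\vee C$ of \autoref{lem:s-tensor}), and its construction is the main technical content of the paper's argument. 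Associativity of the operad is then not a continuity argument but the explicit algebraic factorization $s|_D = s'\,\eta\,s''$ of \autoref{prop:s-factorization}, which shows that transporting through a sequence of nodes using the gluing data agrees with the global transport map. Without this, you have neither a definition of $\circ_\nu$ on the log structure nor a proof of associativity.

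Relatedly, your invocation of the continuity principle is misplaced. \autoref{lem:continuity-principle} compares $\uphi^*\alpha(f)$ with $\alpha(\phi^*f)$ for a single virtual morphism; it does not let you extend an equality of virtual morphisms from a dense open. Moreover, the image of $\circ_\nu$ lies entirely in the boundary divisor $D_{A,B}\subset\bM{A\sqcup B}$, so the ``dense open locus of smooth curves'' in the target plays no role. Finally, for the Kato--Nakayama comparison, the paper does not argue by ``quotienting by one circle factor'' from Vaintrob's framed computation; rather it observes (\autoref{rem:FM-as-free-set-operad} and the proof of \autoref{prop:conf-vs-fm}) that both $\FMtop{}$ and $\KN{\FM{}}$ are, as set operads, free on the arity-wise pieces $\Conf_\bullet(\CC)/(\RR_{>0}\ltimes\CC)\cong\KN{\FM{}|_{\M{}}}$, and matches the generators via \eqref{eq:conf-to-FM}.
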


As an immediate consequence, we obtain in \autoref{sec:formality} a new proof of the following statement.
\begin{corollary}
    The little disks operad is formal.
\end{corollary}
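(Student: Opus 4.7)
The plan is to implement the strategy envisioned by Beilinson~\cite{BeilinsonLetterToKontsevich}: use the algebraic realization of the little disks operad provided by the preceding theorem to produce an operad of commutative differential graded algebras that models the rational cochains of the Fulton--MacPherson operad, and then deduce formality from the purity of the weight filtration on log de Rham cohomology.

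First, working over $\QQ$, I would refine the log de Rham complexes $\forms{\FM{n,\QQ}}$ into a strict operad of CDGAs whose underlying cochain complex computes $\HdR{\FM{n,\QQ}}$. One standard route is to apply the Thom--Sullivan--Whitney functor to the simplicial CDGA obtained from a \v{C}ech resolution of $\forms{\FM{n,\QQ}}$; the symmetric monoidality of the de Rham functor asserted in part (2) of \autoref{thm:BdR} ensures that the operad structure on $\{\FM{n}\}$ descends to an operad structure on these CDGAs. By the comparison isomorphism of part (3) of \autoref{thm:BdR} and the identification of $\{\KN{\FM{n,\CC}}\}$ with the topological Fulton--MacPherson operad, this CDGA-operad becomes, after $-\otimes_\QQ\CC$, a model for the singular cochain operad $C^*(FM_\bullet;\CC)$.

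Second, I would deduce formality of this CDGA-operad using the weight filtration. Each $\bM{0,n+1}$ is smooth and projective with a normal crossings boundary, so Deligne's mixed Hodge theory equips $\HdR{\FM{n,\CC}}$ with a pure weight filtration in each cohomological degree. The weight spectral sequence is multiplicative and degenerates at $E_1$, and a routine adaptation of the Deligne--Griffiths--Morgan--Sullivan purity argument to the operadic setting (in the spirit of Guill\'en--Navarro--Pascual--Roig and Petersen) produces a zig-zag of operadic quasi-isomorphisms from the CDGA-operad to its cohomology. Finally, formality over $\CC$ descends to formality over $\QQ$ by the operadic version of Sullivan's descent theorem.

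The main obstacle is the first step: producing a strict operad of CDGAs from the sheaf-level log de Rham complexes in a way that is strictly compatible with the operadic $\symgp{n}$-actions and composition morphisms. Once this rectification is in place, the purity argument of the second step and Sullivan's rational descent are essentially standard.
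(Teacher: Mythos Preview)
Your approach is correct in outline and would work, but it is considerably more elaborate than what the paper does, and the ``main obstacle'' you identify is in fact a non-issue here.

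The paper's argument avoids derived global sections and rectification entirely. The key observation is that the functor $X \mapsto \sect{X,\forms{X/\KK}}$ of \emph{plain} global log forms is already strong symmetric monoidal, so $\Omega^\bullet(\FM{}) := \sect{\FM{},\forms{\FM{}/\QQ}}$ is a strict dg Hopf cooperad with no further work. The paper then shows directly that the differential on $\Omega^\bullet(\FM{})$ is zero and that the natural map $\Omega^\bullet(\FM{}) \to \HdR{\FM{}}$ is an isomorphism. This uses a virtual isomorphism $\FM{A} \cong (\bM{A},\bbM{A}) \times \logpt$ (the tangent line $T_\infty\Crv{A}$ trivializes over the interior $\M{A}$) to reduce to the classical fact that all global log forms on $(\bM{A},\bbM{A})$ are closed and compute $\HdR{\M{A}}$---which is exactly the purity of $\coH[k]{\M{A}}$ in weight $2k$. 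So purity is used, but in this sharper form rather than via a weight spectral sequence argument. Formality over $\CC$ then follows immediately from the comparison zig-zag $\Omega^\bullet(\FM{})\otimes\CC \to \rsect{\FM{},\forms{\FM{}}}\otimes\CC \to \rsect{\KN{\FM{}};\CC}$.

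In short: your Thom--Sullivan--Whitney step and your operadic DGMS/spectral-sequence step both work, but both are bypassed in the paper by the stronger statement that global forms already give a strict model with vanishing differential. Your approach buys greater generality (it would apply even without the special purity of $\M{A}$), while the paper's approach buys a shorter and more explicit proof. Note also that the paper only asserts formality over $\CC$; your Sullivan descent to $\QQ$ is an addition.
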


    Earlier proofs of this statement were given by Tamarkin \cite{TamarkinDisks}, Kontsevich \cite{KontsevichOperadsMotives} with details by Lambrechts--Voli\'{c} \cite{LambrechtsVolic}, Petersen \cite{PetersenDisks}, Cirici--Horel \cite{CiriciHorelMHS}, and Drummond-Cole--Horel \cite{DrummondColeHorel}. Our proof follows Beilinson's suggestion \cite{BeilinsonLetterToKontsevich} that formality should be an immediate consequence of the fact that the little disks operad ``comes from an operad in algebraic geometry''. Vaintrob \cite{Vaintrob2021} (see also \cite{Vaintrob2019}) previously managed to carry out that strategy for the \emph{framed} little disks operad using his operad of log schemes and \emph{ordinary} morphisms (\autoref{sec:operads-moduli-spaces}); he then combined it with an extra argument about the behaviour of the cochain $\infty$-functor to deduce formality of the \emph{unframed} little disks operad. Our proof, which needs \emph{virtual} morphisms, is the first direct implementation of Beilinson's suggestion.

\subsection{Open questions and relations to other works}

\subsubsection{Motives}
\autoref{thm:BdR} above should be the shadow of a motivic structure associated to log schemes and virtual morphisms.  There are nowadays several approaches to motives of log schemes, e.g.~\cite{Howell2017,Shuklin,BindaAndCo, ParkA1}, in which various parts of this correspondence are made precise, but the comparison between the different approaches appears to be subtle.  It would be interesting and useful to explicitly formulate the relationships between these constructions. As already noted by Howell \cite{Howell2017} and explained by Shuklin \cite{Shuklin} in the setting of classical motives, virtual morphisms give rise to motivic specialization functors (see \autoref{sec:specialization} below) and shed a new light on the construction of limit motives by Spitzweck \cite{SpitzweckLimit}, Levine \cite{LevineTubular}, and Ayoub \cite{AyoubThesis1, AyoubThesis2}.

\subsubsection{Other Weil cohomology theories}
Relatedly, other Weil cohomology theories, such as \'etale and crystalline cohomology, have been extended to the logarithmic context.  Indeed, all of them must extend by the motivic considerations above.  On the other hand, Kato--Nakayama~\cite{KatoNakayama} proved a direct equivalence between logarithmic \'etale and Betti cohomology (with constructible torsion coefficients).  It thus follows indirectly from \autoref{thm:BdR} that \'etale cohomology for log schemes over $\CC$ must also be functorial for virtual morphisms. It would be instructive to prove the functoriality directly from the definition of \'etale cohomology and virtual morphisms, and to do similarly for other Weil cohomology theories. 

\subsubsection{K\"ahler differentials and smoothness}
In the classical theory of schemes, there are many equivalent definitions of smoothness, but their natural analogues in log geometry  do not all coincide.  For instance, consider the standard log point $\logpt$ over a field $\KK$. 
 Its K\"ahler differentials $\forms[1]{\logpt/\KK}$ are free, and moreover, when $\KK=\CC$ its Kato--Nakayama space is a smooth manifold (the circle $\unitcirc$).    However, $\logpt$ is not considered smooth over $\KK$ (only ``ideally smooth'', which is enough for our purposes).  The reason is that in log geometry, ``smoothness'' is a condition on infinitesimal liftings, which cannot be solved in this case, because there are no ordinary morphisms $\Spec{\KK} \to \logpt$.  In contrast, there are many \emph{virtual} morphisms, which suggests that considering virtual infinitesimal liftings in the definition of smoothness may result in a theory in which the log point is smooth and the connection with K\"ahler differentials is more direct.

\subsubsection{Fundamental groupoids}
As illustrated by the example of parenthesized braids and little disks, the realization of tangential basepoints as virtual morphisms from a point allows one to give natural constructions of the fundamental groupoid with tangential basepoints in its various incarnations (de Rham, Betti, \'etale, etc.). This will be developed further in future work. Note that in the context of $p$-adic Hodge theory, Olsson \cite[Chapter 9]{olssonpadic} already used logarithmic geometry to show how tangential basepoints provide fiber functors for relevant Tannakian categories.

\subsubsection{Riemann--Hilbert correspondence and ``splittings''}
Kato--Nakayama~\cite{KatoNakayama} and Ogus~\cite{OgusRHC} have proven a Riemann--Hilbert correspondence for logarithmic flat connections on ideally smooth log schemes.  This has recently been extended by Achinger~\cite{Achinger2023} to handle connections with regular singularities at infinity.  A key ingredient in Achinger's construction is the notion of a ``splitting'' of a log structure, which  ``behaves as if it were a section of the map from [a log scheme] $X^\sharp$ to its underlying scheme $\uX^\sharp$ even though no such section exists, and in particular it allows one to turn log connections into classical connections by means of a `pull-back'
functor'' (\emph{op.~cit.}, p.~3). Virtual morphisms give a framework in which this intuition becomes precise: a splitting of an ideally smooth log scheme is literally equivalent to a section $\uX^\sharp \to X^\sharp$ in the sense of virtual morphisms; see \autoref{ex:achinger-splittings}.  The pullback on connections is then induced by the natural pullback on forms. 

\subsubsection{Deformation quantization} The present work is part of our ongoing project on the motivic structures arising in deformation quantization of Poisson manifolds, in which the little disks operad plays an important role.  In future work, we will build on these results to treat the period integrals arising in Kontsevich's  formality morphism~\cite{Kontsevich2003} motivically, thus realizing his vision of a motivic Galois action in deformation quantization~\cite{KontsevichOperadsMotives} that can be directly compared with the known action of the Grothendieck--Teichm\"{u}ller group~\cite{Dolgushev2021,Willwacher:GCandGRT}.

\subsection*{Conventions and notation}
Throughout this paper, $\KK$ is a field. All monoids are implicitly commutative, and we almost always write the monoid law multiplicatively. The main exception is the set $\NN$ of natural numbers (i.e.~non-negative integers, including zero), which we view as a monoid under addition.

\subsection*{Acknowledgements}
We thank Piotr Achinger, Federico Binda, Damien Calaque, Danny Gillam, Samouil Molcho,  Sam Payne, Yiannis Sakellaridis, and Matt Satriano
for helpful discussions and correspondence. We are especially grateful to Dan Petersen for a stimulating conversation at CIRM, which helped refine our ideas about little disks and virtual morphisms, and prompted us to include them in the present paper. We thank Xu Gao for identifying an error in the proof of \autoref{prop:virtual-Z-points-of-FM-are-binary} in a previous version of the article, which we have now corrected. This work grew out of discussions held by the authors during a Research in Pairs activity at the Mathematisches Forschungsinstitut Oberwolfach (MFO) in 2022 and a Research in Residence activity at the Centre International de Rencontres Math\'{e}matiques (CIRM) in 2024. We thank both institutes for providing excellent hospitality and working conditions.
C.~D.\ was supported by the Agence Nationale de la Recherche through grants ANR-18-CE40-0017 PERGAMO and ANR-20-CE40-0016 HighAGT.
E.~P.\ was funded as a Royal Society University Research Fellow through grant {URF{\textbackslash}R1{\textbackslash}201473}.
B.~P.\ was supported by an Discovery Grant from the Natural Sciences and Engineering Research Council of Canada, a New university researchers startup grant from the Fonds de recherche du Qu\'ebec -- Nature et technologies (FRQNT), and a startup grant at McGill University.

\section{Virtual morphisms in log geometry}
\label{sec:virtual-mors}

In this section, we describe the notion of virtual morphisms in log geometry and the basic properties of the resulting category. Even though we only deal with log schemes, many aspects of the discussion work equally well for log analytic spaces and other log geometry contexts such as (positive) log differentiable spaces~\cite{GillamMolcho}. 
 Basic references on log algebraic geometry include \cite{Kato1989a, Ogus, Temkin:IntroLog}.

\subsection{Log structures and log schemes}
Recall that a \defn{pre-log structure} on a scheme $X$ is a sheaf of monoids $\cM_X$ on $X$ in the \'{e}tale topology along with a morphism of sheaves of monoids
\[
\alpha_X\colon \cM_X\to \cO{X}.
\]
It is called a \defn{log structure} if the induced morphism
\[
\alpha_X^{-1}(\cOx{X})\to \cOx{X}
\]
is an isomorphism, where $\cOx{X}\subset\cO{X}$ is the subsheaf of invertible elements. A \defn{(pre-)log scheme} is a scheme equipped with a (pre-)log structure. Following the convention in log geometry, we will often abuse notation and simply write $X$ for a (pre-)log scheme $(X,\cM_X,\alpha_X)$, sometimes denoting by $\uX$ the underlying scheme when there is a risk of confusion. If $X$ is a log scheme, we will tacitly identify $\cOx{X}$ with the submonoid $\alpha_X^{-1}(\cOx{X})$ of $\cM_X$, viewing $\alpha_X$ as a factorization of the inclusion of $\cOx{X}$ inside $\cO{X}$:
\[
\cOx{X}\hookrightarrow \cM_X \stackrel{\alpha_X}{\longrightarrow} \cO{X}.
\]

A pre-log structure $(\cM_X,\alpha_X)$ has an associated log structure $(\cM^{\log}_X,\alpha^{\log}_X)$, given by the pushout
\begin{equation}
\cM_X^{\log} := \cOx{X} \underset{\alpha_X^{-1}(\cOx{X})}{\sqcup}\cM_X  \label{eq:logification-pushout}
\end{equation}
together with the map $\alpha^{\log}_X\colon \cM_X^{\log}\to \cO{X}$ induced by the inclusion $\cOx{X}\hookrightarrow \cO{X}$ and the map $\alpha_X \colon \cM_X \to \cO{X}$. This \defn{logification} procedure allows one to define new log structures from old ones, which is convenient because usually the naive operation only produces a pre-log structure.

\begin{example}[The spectrum of a log ring]
Let $M$ be a monoid, $R$ a commutative ring and $\alpha \colon M \to R$ a morphism from $M$ to the multiplicative monoid of $R$; such a triple $(R,M,\alpha)$ is called a \defn{log ring}.  This data defines a pre-log structure on $\uX = \Spec{R}$, for which $\cM_X$ is the constant sheaf with stalk $M$ and $\alpha_X \colon \cM_X\to \cO{X}$ is the canonical map induced by $\alpha$.  The logification of this pre-log structure is called the spectrum of $(R,M,\alpha)$; we shall typically denote it by $\Spec{M \to R}$, the map $\alpha$ being understood.
\end{example}

\begin{example}[Trivial log structure]
A scheme $X$ can be made into a log scheme with the \defn{trivial log structure} $\cM_X=\cOx{X}$, with $\alpha_X\colon \cM_X \to \cO{X}$ the inclusion.
\end{example}

\begin{example}[Divisorial log structure]\label{ex:divisorial-log-structures}
Let $\uX$ be a scheme and $\uD\subset \uX$ an effective Cartier divisor, i.e.~a closed subscheme locally defined by the vanishing of a function which is not a zero divisor. Let $j\colon \uX\setminus \uD\hookrightarrow \uX$ denote the open immersion of its complement. We have a log structure $\cM_{X,D}=j_*\cOx{X\setminus D}\cap \cO{X}$ whose sections are the regular functions on $\uX$ which become invertible when restricted to $\uX\setminus \uD$, with $\alpha_{X,D} \colon \cM_{X,D} \to \cO{X}$ the inclusion. We call this a \defn{divisorial log structure}.
\end{example}
Given a divisorial log structure we may pull it back to a closed subscheme (in the sense of  \autoref{sec:pullback} below) to obtain new examples of log structures for which $\alpha_X$ is not injective.

\begin{example}[The log line]\label{ex:log-line}
   The \defn{log line} $\logline$ is the log scheme over $\KK$ whose underlying scheme is the affine line $\AF^1=\Spec{\KK[z]}$
   and whose log structure is the divisorial log structure for the divisor $\{z=0\}$. Concretely, $\cM_{\logline} = z^\NN \cOx{\AF^1} \subset \cO{\AF^1}$ is the sheaf of functions that are expressible as a monomial in $z$ times an invertible function. Equivalently, $\logline = \Spec{z^\NN\hookrightarrow \KK[z]}$.
\end{example}

\begin{example}[The log point]\label{ex:log-point}
    The \defn{log point} $\logpt$ is the log scheme over $\KK$ whose underlying scheme is $\Spec{\KK}$ and whose log structure is the pullback of the log structure of $\logline$ along the closed immersion $i\colon \{0\} \hookrightarrow \AF^1$. Let $t$ denote the germ at $0$ of the coordinate $z$ on $\AF^1$. Then $\cM_{\logpt}$ has global sections given by $\KK^\times  t^\NN$, the product of $\KK^\times$ with the free monoid generated by $t$, and $\alpha$ is given by ``evaluation at $t=0$'':
    \[
    \alpha\colon \KK^{\times} t^\NN \to \KK  \qquad \qquad\lambda t^j \mapsto \lambda 0^j := \begin{cases} \lambda & \mbox{if } j=0 \\ 0 & \mbox{if } j>0.  \end{cases} 
    \]
    Equivalently, $\logpt =\Spec{t^\NN\to \KK[t]/(t)}$.
\end{example}

\subsection{Ordinary and virtual morphisms}
Recall that a morphism of (pre-)log schemes $\phi \colon X \to Y$ is a morphism of schemes $\uphi \colon \uX\to\uY$ together with a morphism of sheaves of monoids $\phi^* \colon \uphi^{-1}\cM_Y \to \cM_X$ such that the following diagram commutes, where the bottom horizontal arrow is the usual pullback of functions. 
\begin{equation}
\begin{tikzcd}
    \uphi^{-1}\cM_{Y} \ar[r,"\phi^*"]\ar[d,"\alpha_Y"'] & \cM_{X} \ar[d,"\alpha_X"] \\
    \uphi^{-1}\cO{Y} \ar[r,"\uphi^*"'] \ar[r] & \cO{X}
\end{tikzcd} \label{eq:ordinary-commutativity}
\end{equation}
 We will refer to such morphisms as \defn{ordinary morphisms}, to distinguish them from the virtual morphisms we will now consider. 
 
 To formulate the notion of virtual morphisms, recall that the \defn{group completion} of a sheaf of (commutative) monoids $\cM$ is the universal sheaf of (abelian) groups $\cMgp$ receiving a homomorphism from $\cM$. The natural map $\cM \to \cMgp$ is injective if and only if $\cM$ is integral, i.e.~satisfies the cancellation law ($fg=fh$ implies $g=h$ for any local sections $f,g,h$ of $\cMgp$).

\begin{definition}\label{def:log-virtual-morph}
 Let $X$ and $Y$ be log schemes.  A \defn{virtual morphism} $X \to Y$ is a pair $(\uphi,\phi^*)$ consisting of a morphism of schemes $\uphi \colon \uX \to \uY$, and a morphism $\phi^* \colon \uphi^{-1}\cMgp_Y \to \cMgp_X$ of sheaves of groups, making the following diagram commute:
 \begin{equation}
 \begin{tikzcd} 
    \uphi^{-1}\cOx{Y} \ar[r,"\uphi^*"] \ar[d] & \cOx{X} \ar[d] \\
     \uphi^{-1}\cMgp_Y \ar[r,"\phi^*"] & \cMgp_X
\end{tikzcd} \label{eq:virtual-commutativity}
\end{equation}
\end{definition}

\begin{remark}
    By the universal property of group completion, the datum of $\phi^*$ is equivalent to the datum of a morphism $\uphi^{-1}\cM_Y \to \cMgp_X$.
\end{remark}

There are two notable differences between the notions of ordinary and virtual morphisms, illustrated by examples below (\autoref{sec:examples-of-morphisms}).
\begin{enumerate}
\item Firstly, the notion of virtual morphism does not require compatibility of pullback with the maps $\alpha$, only with invertible functions. If $Y$ is a log scheme, let us say that a section $f$ of $\cM_Y$ is a \defn{phantom} if $\alpha_Y(f) = 0$. If $\phi \colon X \to Y$ is an ordinary morphism, then the commutativity of \eqref{eq:ordinary-commutativity} implies that every phantom $f \in \uphi^{-1}\cM_Y$ pulls back to a phantom $\phi^*f \in \cM_X$, and more generally if $f \in \uphi^{-1}\cM_Y$ is such that $\uphi^*\alpha_Y(f)= 0$ (i.e.\ $f$ is a phantom ``relative to $\uphi$''), then $\phi^*f\in\cM_X$ is a phantom.  For a virtual morphism, this is no longer true and therefore virtual morphisms are allowed to ``breathe life into phantoms'' by pulling them back to non-phantoms, but the axiom asserts that they cannot change the values assigned to invertible functions.
\item Secondly, the pullback happens at the level of the group completions of the sheaves of monoids and produces formal quotients of monoid elements which are no longer associated to functions on the underlying scheme---this explains the terminology ``virtual'' (as in ``virtual representation of a group''). For instance, for the log point $\logpt$ (\autoref{ex:log-point}), the monoid element $t$ corresponds to the function $\alpha(t)=0$ on the point, but its inverse $t^{-1}$ does not correspond to any function.
\end{enumerate}

\subsection{Categories of log schemes}
Note that virtual morphisms of log schemes may be composed in the same way as ordinary morphisms: one simply composes the underlying morphisms of schemes and sheaves. 

\begin{definition}
We denote by $\WLogSch$ the category of log schemes with virtual morphisms, and by $\LogSch$ the category of log schemes with ordinary morphisms.   
\end{definition}
Thus $\LogSch$ and $\WLogSch$ have the same objects, but the morphisms are different.  Every ordinary morphism $X \to Y$ gives rise to a virtual morphism $X \to Y$ by replacing the pullback map $\phi^* \colon \uphi^{-1}\cM_Y \to \cM_X$ with the induced map $\uphi^{-1}\cMgp_Y \to \cMgp_X$.  Note that the former is completely determined by the latter if $\cM_X$ is integral.  Thus we have a functor
\[
\LogSch \to \WLogSch
\]
which is faithful on the subcategory of integral log schemes. Note that even in the integral setting there are more virtual isomorphisms (isomorphisms in $\WLogSch$) than ordinary isomorphisms (isomorphisms in $\LogSch$); see \autoref{ex:automorphisms-log-pt}, \autoref{ex:inverting-line-bundle} and \autoref{ex:automorphisms-DF-log-structures} below.

We may also consider categories of log schemes relative to a base, as follows.  
 For a log scheme $S$, we denote by $\LogSch_S$ (respectively, $\WLogSch_S$) the slice category $\LogSch/S$ (resp.~$\WLogSch/S$), whose objects are log schemes $X$ equipped with an ordinary (resp.~virtual) morphism of log schemes $X\to S$, and whose morphisms are ordinary morphisms (resp.~virtual morphisms) $X\to Y$ commuting with the morphisms to $S$. Clearly, $\WLogSch_S$ has more objects than $\LogSch_S$ in general---but not if $S$ has the trivial log structure, thanks to the following lemma, which is immediate from the definitions:

\begin{lemma}\label{lem:virtual-mor-to-scheme}
    Let $S$ be a scheme viewed as a log scheme with the trivial log structure. Then any virtual morphism of log schemes $X\to S$ is ordinary, i.e., is simply the datum of a morphism of schemes $\uX\to S$.
\end{lemma}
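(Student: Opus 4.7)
The plan is to unfold \autoref{def:log-virtual-morph} in the special case where the log structure on $S$ is trivial and verify that the data of $\phi^{*}$ collapses to nothing beyond the underlying map of schemes $\uphi$.

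First I would observe that when $\cM_S = \cOx{S}$ is already a sheaf of groups, the canonical map $\cM_S \to \cMgp_S$ is an isomorphism, so $\cMgp_S = \cOx{S}$. Consequently the top horizontal arrow in diagram \eqref{eq:virtual-commutativity}, namely $\uphi^* \colon \uphi^{-1}\cOx{S} \to \cOx{X}$, coincides with the left vertical arrow precomposed with $\phi^{*}$ to land in $\cMgp_X$. The commutativity of \eqref{eq:virtual-commutativity} thus forces $\phi^{*} \colon \uphi^{-1}\cMgp_S \to \cMgp_X$ to be the composition of the ordinary pullback on invertible functions with the inclusion $\cOx{X} \hookrightarrow \cMgp_X$; in particular $\phi^{*}$ is entirely determined by $\uphi$.

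Conversely, given any morphism of schemes $\uphi \colon \uX \to \uS$, the above formula does define a homomorphism of sheaves of groups $\phi^{*} \colon \uphi^{-1}\cMgp_S \to \cMgp_X$ making \eqref{eq:virtual-commutativity} commute, so the assignment $(\uphi,\phi^{*}) \mapsto \uphi$ is a bijection between virtual morphisms $X \to S$ and morphisms of underlying schemes $\uX \to \uS$.

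Finally, I would check that such a virtual morphism is actually ordinary: one must produce a map $\uphi^{-1}\cM_S \to \cM_X$ compatible with $\alpha$ in the sense of \eqref{eq:ordinary-commutativity}. But $\cM_S = \cOx{S}$, and we already have the pullback $\uphi^{-1}\cOx{S} \to \cOx{X} \subseteq \cM_X$, which makes \eqref{eq:ordinary-commutativity} commute because on invertibles the map $\alpha$ is just the inclusion into $\cO{}$. This step is essentially a tautology; there is no real obstacle in the proof, which is why the lemma is labeled as immediate from the definitions.
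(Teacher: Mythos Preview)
Your proposal is correct and matches the paper's treatment: the paper simply declares the lemma ``immediate from the definitions'' without writing out a proof, and what you have done is precisely the unfolding one would perform to verify this. The key observation---that $\cMgp_S = \cOx{S}$ when the log structure is trivial, so the left vertical arrow in \eqref{eq:virtual-commutativity} is an isomorphism and hence $\phi^*$ is forced to be the inclusion $\cOx{X}\hookrightarrow\cMgp_X$ composed with $\uphi^*$---is exactly right, as is the final check that this data satisfies \eqref{eq:ordinary-commutativity}.
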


In particular, if $\KK$ is a ring and $S$ is the ``point'' $*:=\Spec{\KK}$ with the trivial log structure, we get the categories $\LogSch_\KK$ and $\WLogSch_\KK$ whose objects are log schemes over $\KK$ in the usual sense.

\subsection{Examples of virtual morphisms}\label{sec:examples-of-morphisms}
The following examples illustrate the differences between the notions of ordinary and virtual morphisms.

\begin{example}[Virtual morphisms to a log point]\label{ex:virtual-mor-to-log-pt}
Let $X$ be a log scheme over $\KK$ and $\logpt$ be the log point (\autoref{ex:log-point}). A virtual morphism $f\colon X\to \logpt$ of log schemes over $\KK$ is equivalent to the datum of a group homomorphism $t^{\ZZ}\to \cMgp(X):= \sect{X,\cMgp_X}$, i.e., an element $f^*(t)\in \cMgp(X)$. Thus virtual morphisms $X\to \logpt$ of log schemes over $\KK$ are in bijection with $\cMgp(X)$. In contrast, ordinary morphisms $X\to \logpt$ are in bijection with $\cMphan(X)\subset \cM(X)$, the ideal of \emph{phantom} global sections of $\cM_X$, i.e.~those sent to zero by $\alpha_X$.
\end{example}

\begin{example}[Virtual points of a log point]\label{ex:basepoints-of-log-point}
    As a special case of the previous example, let $X=*$ denote the point $\Spec{\KK}$ equipped with the trivial log structure. There is a unique morphism $p \colon \logpt \to \pt$ of log schemes over $\KK$, given by the identity map of $\Spec{\KK}$, with the map of monoids of global sections given by the inclusion $\KK^{\times} \hookrightarrow \KK^{\times}t^\NN$. Meanwhile, a virtual morphism 
    \[
    s\colon  * \to \logpt
    \]
    of log schemes over $\KK$ is determined by an element $s^*(t)=\lambda\in\KK^\times$. On global sections, $s^*\colon\KK^{\times}t^\NN \to \KK^{\times}$ is the morphism of monoids given by evaluation at $t=\lambda$.  Thus virtual morphisms $*\to\logpt$ of log schemes over $\KK$ are in bijection with $\KK^\times$, and are all sections of $p\colon \logpt\to *$. Note, however, that there are no ordinary morphisms from $*$ to $\logpt$. Indeed, since $\alpha(t)=0$, an ordinary morphism would have to send $t$ to a phantom in $\KK^\times$, of which there are none.  
\end{example}

\begin{example}[Automorphisms of the log point]\label{ex:automorphisms-log-pt}
A virtual automorphism $f$ of the log point $\logpt$ over $\KK$ is equivalent to the datum of
$$f^*(t)=\lambda t^j \quad \mbox{ with } \lambda\in\KK^\times \mbox{ and } j\in\{-1,1\}.$$
It is an ordinary automorphism if and only if $j=1$.
\end{example}

\begin{example}[Retraction from a log line to a log point]\label{ex:log-line-retract}
Let $X = \logline$ be the log line (\autoref{ex:log-line}). Letting $j\colon \AF^1\setminus \{0\}\hookrightarrow\AF^1$, the group completion of $\cM_X = z^\NN\cOx{\AF^1}$ is $\cMgp_X = j_*\cOx{\AF^1\setminus \{0\}}$,
, the sheaf of rational functions that are regular and invertible away from zero.  Thus the  global sections are given by $\cMgp(X) = \KK^\times z^\ZZ$, the set of Laurent monomials in $z$.
A virtual morphism 
\[
r\colon \logline \to \logpt
\]
of log schemes over $\KK$ is thus determined by an element $r^*(t)=\mu z^j$ with $\mu\in \KK^\times$ and $j\in\ZZ$. If $\mu=1$ and $j=1$, then $r$ is a retraction of the ordinary morphism $i\colon \logpt \hookrightarrow \logline$ defined by $i^*(z)=t$. Note, however, that there are no ordinary morphisms from $\logline$ to $\logpt$ because $\cM_{\logline}$ has no phantoms.
\end{example}

\begin{example}[Mapping a divisorial log structure to a log point]More generally, let $X = (\uX,\uD)$ be a divisorial log scheme over $\KK$ (\autoref{ex:divisorial-log-structures}). Letting $j\colon \uX\setminus \uD\hookrightarrow\uX$, we have $\cMgp_X = j_*\cOx{\uX\setminus \uD}$, the sheaf of rational functions that are regular and invertible on $\uX\setminus\uD$.  Therefore virtual morphisms $X \to \logpt$ of log schemes over $\KK$ are in bijection with $\cMgp(X) = \cOx{}(\uX\setminus \uD)$, or equivalently maps of schemes $\uX\setminus \uD \to \Gm$ over $\KK$.
\end{example}

\begin{example}[Splittings]\label{ex:achinger-splittings}
    Let $X$ be a log scheme and let $\uX$ be the underlying scheme, viewed as a log scheme with trivial log structure. 
 Then there is a canonical ordinary morphism $\pi\colon X \to \uX$, given by the inclusion $\cOx{X}\hookrightarrow\cM_X$ of sheaves of monoids.  When the log structure of $X$ is nontrivial, this morphism does not admit any ordinary sections, i.e.~ordinary morphisms $s\colon \uX \to X$ such that $\pi \circ s = \id{\uX}$.  However it may admit virtual sections: these are equivalent to monoid homomorphisms $\cM_X \to \cOx{X}$ that split the inclusion.  For integral monoids, such a morphism is equivalent to a section of the projection $\cM_X\to \cM_X/\cOx{X}$; the latter were called ``splittings'' of the log structure in \cite{Achinger2023}.
\end{example}

\subsection{Deligne--Faltings log schemes and virtual morphisms}

    \subsubsection{Definitions}

    Let $\uX$ be a scheme. A \defn{split vector bundle} over $\uX$ is a vector bundle $\uE$ over $\uX$ equipped with a decomposition  
    \begin{equation}\label{eq:splitting-bundle-into-lines}
    \uE = \uL_1 \oplus \cdots \oplus \uL_n
    \end{equation}
    as a direct sum of line subbundles. (The ordering of those line subbundles is not part of the data and simply appears here for convenience of notation.)     Equivalently, it is a vector bundle equipped with a reduction of its structure group to a torus $(\Gm)^n$. The \defn{interior} of $\uE$ is the associated $(\Gm)^n$-bundle
    \[
    \uE^\circ = \uL_1^\times\times_X\cdots \times_X \uL_n^\times.
    \]
    The torus $(\Gm)^n$ acts on both $\uE$ and $\uE^\circ$ by rescaling in each line bundle.
    
    A function $\uE\to \mathbb{A}^1$ (resp. $\uE^\circ\to\Gm$) over some \'{e}tale open of $\uX$ is \defn{monomial} if it is equivariant with respect to a character $(\Gm)^n\to \Gm$. 
    If $t_1,\ldots, t_n$ are local linear coordinates on the fibres of $\uL_1,\ldots,\uL_n$, then a monomial function $\uE\to\mathbb{A}^1$ is expressed as
    \begin{equation}\label{eq:fibrewise-monomial-function}
    f=g(x)\,t_1^{a_1}\cdots t_n^{a_n}
    \end{equation}
    with $a_i\in\NN$ and $g\in\cO{X}$. Similarly, a monomial function $\uE^\circ\to\Gm$ has the form
    \begin{equation*}
    f=h(x)\,t_1^{b_1}\cdots t_n^{b_n}
    \end{equation*}
    with $b_i\in\ZZ$ and $h\in \cOx{X}$.

    \begin{definition}\label{defi:DF-log-structures}
    A \defn{Deligne--Faltings datum} is a triple $(\uX,\uD,\uE)$ where $\uX$ is a scheme, $\uD \subset \uX$ is an effective Cartier divisor and $\uE \to \uX$ is a split vector bundle. The associated \defn{Deligne--Faltings log scheme} is the scheme $\uX$ equipped with the log structure $\cM_{X,D,E}$ whose sections are the monomial functions on $\uE$ that are nonvanishing on $\uE^\circ|_{\uX\setminus \uD}$, with $\alpha_{X,D,E}\colon \cM_{X,D,E} \to \cO{X}$ given by the restriction of functions to the zero section of $\uE$.
   \end{definition}

    A section of $\cM_{X,D,E}$ locally has the form \eqref{eq:fibrewise-monomial-function} with $a_i\in\NN$ and $g\in\cM_{X,D}\subset \cO{X}$ a regular function that is invertible on $\uX\setminus \uD$.  Then we have 
    \[
        \alpha_{X,D,E}(f) = \begin{cases}
            g & \mbox{ if }a_1=\cdots=a_n=0; \\
            0 & \mbox{ otherwise.}
        \end{cases}
    \]
    More invariantly, we have
    \begin{equation}\label{eq:DF-log-structure-described-invariantly}
    \cM_{X,D,E} \cong \cM_{X,D}\cdot (\cL_1^{\vee\times})^\NN\cdots (\cL_n^{\vee\times})^\NN
    \end{equation}
    with $\cL_i^{\vee \times}$ the sheaf of nonvanishing sections of $\uL_i^\vee$, and $\alpha_{X,D,E} \colon \cM_{X,D,E} \to \cO{X}$ given by extending the map $\cM_{X,D}\to\cO{X}$ via $\cL_i^{\vee\times} \mapsto 0$. 

    The canonical projection $(\uX,\cM_{X,D,E})\to \uX$ factors uniquely through a morphism $(\uX,\cM_{X,D,E})\to (\uX,\cM_{X,D})$ to the divisorial log structure of $(\uX,\uD)$.

\subsubsection{Morphisms as monomial maps}

Let $\uE = \bigoplus_{i=1}^n \uL_i$ and $\uE'=\bigoplus_{j=1}^{n'}\uL'_j$ be split vector bundles over $\uX$. A morphism $\uE\to \uE'$ (resp. $\uE^\circ\to \uE^{\prime\circ}$) of schemes over $\uX$ is \defn{monomial} if it is equivariant with respect to a morphism of tori $(\Gm)^n\to (\Gm)^{n'}$. Concretely, a monomial morphism $\uE\to \uE'$ is expressed in local trivializations of the line bundles by a formula
\begin{equation}
(t_1,\ldots,t_n) \mapsto \rbrac{g_1(x)\prod_i t_i^{a_{i,1}},\ \ldots,\  g_{n'}(x)\prod_i t_i^{a_{i,n'}}}
\label{eq:monomial-map}
\end{equation}
with $a_{i,j}\in\NN$ and $g_j\in\mathcal{O}_X$. Similarly, a monomial morphism $\uE^\circ\to \uE^{\prime\circ}$ has the form
\begin{equation}
(t_1,\ldots,t_n) \mapsto \rbrac{h_1(x)\prod_i t_i^{b_{i,1}},\ \ldots ,\ h_{n'}(x)\prod_i t_i^{b_{i,n'}}}
\label{eq:monomial-map-open}
\end{equation}
with $b_{i,j}\in\ZZ$ and $h_j\in\mathcal{O}^\times_X$.

\begin{definition}
Consider two Deligne--Faltings data $(\uX,\uD,\uE)$ and $(\uX,\uD,\uE')$ with the same underlying divisor.
\begin{enumerate}
    \item An \defn{ordinary monomial map} from $(\uX,\uD,\uE)$ to $(\uX,\uD,\uE')$ is a monomial map $\uE\to \uE'$ which sends the zero section of $\uE$ to the zero section of $\uE'$ and whose restriction to $\uX\setminus \uD$ sends $\uE^\circ|_{\uX\setminus \uD}$ to $\uE^{\prime\circ}|_{\uX\setminus \uD}$.
    \item A \defn{virtual monomial map} from $(\uX,\uD,\uE)$ to $(\uX,\uD,\uE')$ is a monomial map $\uE^\circ|_{\uX\setminus \uD} \to \uE^{\prime\circ}|_{\uX\setminus \uD}$.
\end{enumerate}
\end{definition}

Concretely, an ordinary monomial map is expressed locally by a formula \eqref{eq:monomial-map}, with $a_{i,j} \in \NN$ such that $\sum_{i} a_{i,j} > 0$ for all $j$, and with $g_j \in \cM_{X,D}\subset \cO{X}$ functions on $\uX$ which are invertible on $\uX\setminus \uD$. Likewise, a virtual monomial map has the form \eqref{eq:monomial-map-open} with $b_{i,j}\in \ZZ$ and $h_j\in\cOx{X\setminus D}$.

Note that the set of ordinary monomial maps injects in the set of virtual monomial maps because restriction of functions from $\uX$ to $\uX\setminus \uD$ is injective.

\begin{remark}
If $\uX\setminus\uD$ is a reduced scheme, then every map of $(\uX\setminus \uD)$-schemes $\uE^\circ|_{\uX\setminus \uD} \to \uE^{\prime\circ}|_{\uX\setminus \uD}$ is automatically a monomial map, and therefore defines a virtual monomial map from $(\uX,\uD,\uE)$ to $(\uX,\uD,\uE')$. Indeed, if $R$ is a reduced ring, then the monomials $ft_1^{a_1}\cdots t_n^{a_n}$ with $f\in R^\times$ and $a_i\in\ZZ$ are the only invertible elements of the ring $R[t_1^{\pm 1},\ldots,t_n^{\pm 1}]$. This fails if $R$ is not reduced, e.g. $1+2t$ is its own inverse in $(\ZZ/4\ZZ)[t^{\pm 1}]$.
\end{remark}

\begin{proposition}\label{prop:morphisms-as-monomial-maps}
Consider two Deligne--Faltings data $(\uX,\uD,\uE)$ and $(\uX,\uD,\uE')$. There is a natural bijection between ordinary (resp. virtual) monomial maps 
\[
(\uX,\uD,\uE) \to (\uX,\uD,\uE')
\]
and ordinary (resp. virtual) morphisms 
\[
(\uX,\cM_{X,D,E}) \to (\uX,\cM_{X,D,E'})
\]
of log schemes over $(\uX,\cM_{X,D})$, compatible with composition.
\end{proposition}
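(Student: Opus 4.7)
The plan is to reduce to a local computation by trivializing the line bundles, then match the two sides via the explicit generators of the Deligne--Faltings log structure. Fix local nowhere-vanishing sections $t_i$ of the dual line bundles $\uL_i^\vee$ and $t'_j$ of $\uL_j^{\prime\vee}$. By \eqref{eq:DF-log-structure-described-invariantly}, the monoid $\cM_{X,D,E}$ is locally generated over $\cM_{X,D}$ by the $t_i$, with elements of the form $g \prod_i t_i^{a_i}$ for $g \in \cM_{X,D}$ and $a_i \in \NN$; its group completion $\cMgp_{X,D,E}$ allows $a_i \in \ZZ$. A morphism $\phi$ over $(\uX,\cM_{X,D})$ has $\id{\uX}$ as its underlying scheme map and pullback fixed on $\cM_{X,D}$ (respectively $\cMgp_{X,D}$); it is therefore determined by the images of the generators $t'_j$.

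For an ordinary morphism, $\phi^*(t'_j) \in \cM_{X,D,E}$ admits a local presentation $g_j \prod_i t_i^{a_{i,j}}$ with $g_j \in \cM_{X,D}$ and $a_{i,j} \in \NN$, where the exponents $a_{i,j}$ are uniquely determined. The commutativity \eqref{eq:ordinary-commutativity} together with $\alpha(t'_j) = 0$ forces $\alpha(g_j\prod_i t_i^{a_{i,j}}) = 0$, which, since $g_j$ is invertible on $\uX\setminus\uD$, is equivalent to $\sum_i a_{i,j} > 0$. This is precisely the local description \eqref{eq:monomial-map} of an ordinary monomial map. For a virtual morphism, the analogous computation in $\cMgp_{X,D,E}$ yields $\phi^*(t'_j) = h_j \prod_i t_i^{b_{i,j}}$ with $h_j \in \cMgp_{X,D} = \cOx{\uX\setminus\uD}$ and $b_{i,j} \in \ZZ$, matching \eqref{eq:monomial-map-open} with no further constraint. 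Conversely, a monomial map of either type induces a morphism of log structures by pulling back monomial functions on $\uE'$ (respectively on $\uE^{\prime\circ}|_{\uX\setminus\uD}$ in the virtual case), which lands in the appropriate monoid by torus equivariance.

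The remaining step is to check that this local correspondence is independent of the chosen trivializations and thus glues. A change $t_i \mapsto u_i t_i$ with $u_i \in \cOx{X}$ rescales the $g_j$ (respectively $h_j$) by units but leaves the underlying sheaf map and monomial map unchanged, since both sides are defined invariantly. Compatibility with composition then follows from functoriality of pullback of functions. The most subtle point I anticipate is the globalization in the virtual case: one must patch the local data $(h_j, b_{i,j})$ into a bona fide monomial map $\uE^\circ|_{\uX\setminus\uD} \to \uE^{\prime\circ}|_{\uX\setminus\uD}$ equivariant for a morphism $(\Gm)^n \to (\Gm)^{n'}$. This should follow because the exponents $b_{i,j}$ are intrinsic invariants of $\phi^*$ (the $t_i$-degrees of $\phi^*(t'_j)$ modulo $\cMgp_{X,D}$), giving the morphism of tori, while the $h_j$ then assemble into global sections of the appropriate line bundle twists over $\uX\setminus\uD$.
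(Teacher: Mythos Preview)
Your proof is correct and follows essentially the same approach as the paper. The paper constructs the correspondence in the forward direction---given a monomial map, pull back monomial functions to obtain the morphism of sheaves of monoids---and then simply asserts that ``one easily checks that they give bijections''; you carry out precisely that check by working in the inverse direction, reading off the data $(g_j,a_{i,j})$ or $(h_j,b_{i,j})$ from $\phi^*(t'_j)$ in local trivializations and verifying the constraints match. Your hesitancy in the final paragraph about globalizing the virtual case is unwarranted: as you correctly note, the exponents $b_{i,j}$ are intrinsic (they are the characters under the torus action, or equivalently the image of $\phi^*$ in the characteristic monoid $\cMgp/\cOx{}$), so they glue to a global morphism of tori, and the $h_j$ then patch automatically.
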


\begin{proof}
This is an easy consequence of the definitions, so we shall only sketch the main points of the proof.  Consider the following recipes to construct a logarithmic morphism from a monomial map:
\begin{enumerate}
\item Let $\varphi\colon (\uX,\uD,\uE)\to (\uX,\uD,\uE')$ be an ordinary monomial map. The pullback by $\varphi$ of a monomial function on $\uE'$ is a monomial function on $\uE$, and since $\varphi$ sends $\uE^\circ|_{\uX\setminus \uD}$ to $\uE^{\prime\circ}|_{\uX\setminus \uD}$, we get a morphism of sheaves of monoids
\[
\varphi^*\colon \cM_{X,D,E'}\to \cM_{X,D,E}
\]
which is the identity on $\cM_{X,D}$. The fact that $\varphi$ sends the zero section of $\uE$ to the zero section of $\uE'$ gives the compatibility of $\varphi^*$ with $\alpha_{X,D,E}$ and $\alpha_{X,D,E'}$, hence an ordinary morphism $(\uX,\cM_{X,D,E})\to (\uX,\cM_{X,D,E'})$. 
\item Let  $\varphi\colon (\uX,\uD,\uE)\to (\uX,\uD,\uE')$ be a virtual monomial map. Note that $\cMgp_{X,D,E}$ consists of monomial functions $\uE^\circ|_{\uX\setminus \uD}\to \Gm$ and therefore we get a morphism of sheaves of monoids
$$\varphi^*\colon \cMgp_{X,D,E'} \to \cMgp_{X,D,E}$$
which is the identity of $\cM_{X,D}$, hence a virtual morphism $(\uX,\cM_{X,D,E})\to (\uX,\cM_{X,D,E'})$.
\end{enumerate}
These recipes are clearly compatible with composition of morphisms, and one easily checks that they give bijections between ordinary (respectively, virtual) monomial maps and ordinary (resp.\ virtual) morphisms, as claimed.
\end{proof}

From now on, in the Deligne--Faltings setting, we will use the notions of Deligne--Faltings data with monomial maps and Deligne--Faltings log schemes with logarithmic morphisms interchangeably.  For instance, we may simply refer to a ``virtual morphism of Deligne--Faltings log schemes $(\uX,\uD,\uE) \to (\uX,\uD,\uE')$''.

\subsubsection{Examples}
The following are some key examples of morphisms between Deligne--Faltings log schemes.

\begin{example}[Linear maps]
The simplest example of an ordinary (respectively, virtual) monomial map is a morphism of vector bundles $\uE \to \uE'$ (resp.\ $\uE|_{\uX\setminus\uD}\to\uE'|_{\uX\setminus \uD})$ that induces an injection of each line $\uL_{i}|_{\uX\setminus\uD}$ into some line  $\uL_{j(i)}'|_{\uX\setminus \uD}$.
\end{example}

\begin{example}[Pairings]
    Let $\uL_1,\uL_2$ and $\uN$ be line bundles on $\uX$, and let
    \[
    \abrac{-,-} \colon \uL_1 \otimes \uL_2 \to \uN
    \]
    be a bilinear pairing that is nondegenerate over $\uX\setminus \uD$.  Then $\abrac{-,-}$ defines an ordinary monomial map $(\uX,\uD,\uL_1\oplus\uL_2) \to (\uX,\uD,\uN)$.  Similarly, a nondegenerate bilinear pairing defined over $\uX\setminus \uD$ (possibly with poles on $\uD$) gives a virtual monomial map.
\end{example}

\begin{example}[Inverting a line bundle]\label{ex:inverting-line-bundle}
Let  $\uL\to \uX$ be a line bundle over a scheme, and form the log schemes $X_+=(\uX,\cM_{X,\varnothing,L})$ and $X_-=(\uX,\cM_{X,\varnothing,L^\vee})$ associated to $\uL$ and its dual. Inverting non-vanishing sections on the fibres produces a virtual isomorphism $X_+\to X_-$ which is not ordinary. 
\end{example}

\begin{example}[Virtual isomorphisms of Deligne--Faltings log structures]\label{ex:automorphisms-DF-log-structures}
Up to virtual isomorphism, the log structure associated to a Deligne--Faltings triple $(\uX,\uD,\uE)$  depends only on the restriction of the split vector bundle $\uE$ to the open set $\uX\setminus \uD$. For instance, let $\uL$ and $\uL'$ be two line bundles on $\uX$ and  consider the Deligne--Faltings log schemes $X=(\uX,\cM_{X,D,L})$ and $X'=(\uX,\cM_{X,D,L'})$. Then any isomorphism 
\begin{equation}\label{eq:iso-line-bundles-in-example}
\uL|_{\uX\setminus \uD}\cong \uL'|_{\uX\setminus \uD}.
\end{equation}
gives a virtual isomorphism $X \cong X'$.  In contrast, an ordinary isomorphism is equivalent to an isomorphism $\uL\cong\uL'$ defined over all of $\uX$.

As an important special case, if $\uL|_{\uX\setminus \uD}$ is trivial, then any trivialization gives rise to a virtual isomorphism
\[
X\cong (\uX,\cM_{X,D})\times \logpt,
\]
of log schemes over $\uX$.  In contrast, these log schemes are isomorphic in the ordinary sense if and only if $\uL$ is trivial over all of $\uX$.
\end{example}

\subsection{Logification}\label{subsec:logification}

The notion of virtual morphism can be naturally extended to pre-log schemes at the cost of a slightly more complicated definition, as follows:

 \begin{definition}\label{def:virtual-morph}
    If $X$ and $Y$ are pre-log schemes, a \defn{virtual morphism $\phi\colon X \to Y$} is a tuple $\phi = (\uphi,\phi_0^*,\phi^*)$ consisting of a morphism of schemes $\uphi \colon \uX \to \uY$ together with a morphism $\phi_0^* \colon \uphi^{-1}\alpha^{-1}_Y(\cOx{Y}) \to \alpha^{-1}_X(\cOx{X})$ of sheaves of monoids and a morphism $\phi^* \colon \uphi^{-1}\cMgp_{Y} \to \cMgp_{X}$ of sheaves of groups,  making the following diagram commute:
    \begin{equation}
    \begin{tikzcd}
        \uphi^{-1}\cOx{Y} \ar[d,"\uphi^*"']& \uphi^{-1}\alpha_Y^{-1}(\cOx{Y}) \ar[l,"\alpha_Y"']\ar[d,"\phi_0^*"]\ar[r,hook] & \uphi^{-1}\cM_Y \ar[r] & \uphi^{-1}\cMgp_Y \ar[d,"\phi^*"] \\
       \cOx{X} & \alpha_X^{-1}(\cOx{X}) \ar[l,"\alpha_X"']\ar[r,hook] & \cM_X \ar[r] & \cMgp_X
    \end{tikzcd}\label{eq:prelog-virtual}
    \end{equation}
\end{definition}

\begin{remark}
    If $\cM_X$ is integral, then $\phi_0^*$ is uniquely determined by $\phi^*$.
\end{remark}
\begin{remark}
    If the pre-log schemes $X$ and $Y$ are log schemes, i.e.\ $\alpha^{-1}\cOx{} \cong \cOx{}$, then the left square in  \eqref{eq:prelog-virtual} is redundant, so that \autoref{def:virtual-morph} reduces to \autoref{def:log-virtual-morph}.
\end{remark}

For a pre-log scheme $X=(X,\cM_X,\alpha_X)$, we denote by $X^{\log}=(X,\cM_X^{\log},\alpha_X^{\log})$ the log scheme obtained by the logification procedure. The morphism $\cM_X\to \cM_X^{\log}$ induces an ordinary morphism of pre-log schemes $X^{\log}\to X$, whose underlying morphism of schemes is $\mathrm{id}_X$.  To understand its interaction with virtual morphisms, we require the following observation, which is an immediate consequence of the universal properties of pushouts and group completions:
\begin{lemma}\label{lem:logification-and-group-completion}
    We have a canonical isomorphism
    \[
    (\cM_X^{\log})^{\gp} \cong \cOx{X} \underset{\alpha_X^{-1}(\cOx{X})}{\sqcup} \cMgp_X
    \]
    commuting with the natural maps from $\cOx{X}$.
\end{lemma}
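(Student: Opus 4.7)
The plan is to deduce the isomorphism formally from universal properties and the fact that group completion is a left adjoint, so we never need to touch sheaves of stalks by hand.

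First, I would observe that the group completion functor, from sheaves of commutative monoids to sheaves of abelian groups on the \'etale site of $\uX$, is left adjoint to the forgetful functor in the other direction. Consequently it preserves all colimits, in particular the defining pushout
\[
\cM_X^{\log} = \cOx{X} \underset{\alpha_X^{-1}(\cOx{X})}{\sqcup} \cM_X.
\]
Applying the group completion functor to this pushout gives a canonical isomorphism
\[
(\cM_X^{\log})^{\gp} \cong (\cOx{X})^{\gp} \underset{(\alpha_X^{-1}(\cOx{X}))^{\gp}}{\sqcup} \cMgp_X,
\]
where the right-hand pushout is computed in the category of sheaves of abelian groups.

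Next I would carry out two simplifications to match the statement. On the one hand, $\cOx{X}$ is already a sheaf of abelian groups, so $(\cOx{X})^{\gp} = \cOx{X}$ canonically. On the other hand, in a pushout of abelian groups over a submonoid $P$ that maps into two groups, one may freely replace $P$ by $P^{\gp}$ (or vice versa) without changing the pushout: both monoid maps $P \to \cOx{X}$ and $P \to \cMgp_X$ factor uniquely through $P^{\gp}$ by the universal property of group completion, so the two pushouts co-represent the same functor on sheaves of abelian groups. Applying this with $P = \alpha_X^{-1}(\cOx{X})$ identifies
\[
\cOx{X} \underset{(\alpha_X^{-1}(\cOx{X}))^{\gp}}{\sqcup} \cMgp_X \;\cong\; \cOx{X} \underset{\alpha_X^{-1}(\cOx{X})}{\sqcup} \cMgp_X,
\]
which is the formula in the statement. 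Naturality with respect to the maps from $\cOx{X}$ is automatic: the isomorphism is built from the structural maps into each pushout, and all arrows out of $\cOx{X}$ agree by construction.

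The only genuine obstacle is bookkeeping rather than mathematics: one must remember that pushouts and group completions of sheaves are computed by sheafifying the corresponding pre-sheaf constructions, and then check that sheafification is compatible with the two simplifications above. Since sheafification is itself a left adjoint, it commutes with both group completion and pushouts, so all the manipulations above remain valid at the sheaf level. As a sanity check, one could alternatively verify the isomorphism by hand via the universal property: a sheaf of abelian groups $G$ receiving compatible maps from $\cOx{X}$ and $\cMgp_X$ is the same as a map from $\cM_X^{\log}$ to $G$ (pushout property, plus the universal property of $\cMgp_X$ applied to the map $\cM_X \to G$), which in turn is precisely a map $(\cM_X^{\log})^{\gp} \to G$.
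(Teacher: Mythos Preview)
Your proof is correct and takes essentially the same approach as the paper, which simply states that the lemma ``is an immediate consequence of the universal properties of pushouts and group completions'' without further detail. Your argument via left adjointness of group completion is just a more explicit unpacking of the same idea.
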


Now observe that the commutativity of the diagram \eqref{eq:prelog-virtual} defining virtual morphisms implies that a virtual morphisms of pre-log schemes $X \to Y$ induces a morphism of the pushouts describing the group completions of $\cM_X^{\log}$ and $\cM_Y^{\log}$ in \autoref{lem:logification-and-group-completion}.  Thus, by the universal property of pushouts, we obtain the following compatibility of virtual morphisms and logification.

\begin{lemma}\label{prop:logification}
    If $\phi\colon X \to Y$ is a virtual morphism of pre-log schemes, then there is a unique virtual morphism of log schemes $\phi^{\log}\colon X^{\log} \to Y^{\log}$ such that the following induced diagram of virtual morphisms of pre-log schemes commutes.
    \[
    \begin{tikzcd}
        X^{\log} \ar[r,"\phi^{\log}"]\ar[d] & Y^{\log} \ar[d]\\
        X \ar[r,"\phi"] & Y
    \end{tikzcd}
    \]
\end{lemma}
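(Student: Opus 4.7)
The plan is to apply the universal property of the pushout describing $(\cM^{\log})^{\gp}$ in \autoref{lem:logification-and-group-completion}, using the data already contained in the virtual morphism $\phi$ of pre-log schemes.

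First, I would take $\uphi$ itself as the underlying scheme morphism of $\phi^{\log}$. To build the required morphism of sheaves of abelian groups
$$(\phi^{\log})^* \colon \uphi^{-1}(\cM_Y^{\log})^{\gp} \longrightarrow (\cM_X^{\log})^{\gp},$$
I would use that $\uphi^{-1}$ is a left adjoint and hence preserves the pushout of \autoref{lem:logification-and-group-completion}, so that the source is naturally the pushout of $\uphi^{-1}\cOx{Y}$ and $\uphi^{-1}\cMgp_Y$ along $\uphi^{-1}\alpha_Y^{-1}(\cOx{Y})$. By the universal property, defining a map out of this pushout into $(\cM_X^{\log})^{\gp}$ amounts to specifying compatible maps out of $\uphi^{-1}\cOx{Y}$ and $\uphi^{-1}\cMgp_Y$. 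I would take the first to be $\uphi^*$ followed by the canonical inclusion $\cOx{X} \hookrightarrow (\cM_X^{\log})^{\gp}$, and the second to be $\phi^*$ followed by the natural map $\cMgp_X \to (\cM_X^{\log})^{\gp}$. Their required agreement on $\uphi^{-1}\alpha_Y^{-1}(\cOx{Y})$ is then supplied by the commutativity of the left half of the defining diagram \eqref{eq:prelog-virtual} of $\phi$: both composites factor through $\phi_0^* \colon \uphi^{-1}\alpha_Y^{-1}(\cOx{Y}) \to \alpha_X^{-1}(\cOx{X})$, and the two images of $\alpha_X^{-1}(\cOx{X})$ in $(\cM_X^{\log})^{\gp}$ coincide by the very definition of the pushout. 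The commutativity condition \eqref{eq:virtual-commutativity} for $\phi^{\log}$ is then automatic, since the map out of $\uphi^{-1}\cOx{Y}$ was built so as to factor through $\cOx{X}$ via $\uphi^*$.

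To check that the square in the statement commutes as virtual morphisms of pre-log schemes $X^{\log} \to Y$, I would compare the two composites on each of the three pieces of data in \autoref{def:virtual-morph}. The underlying scheme morphism is $\uphi$ on both sides. On the group-completed monoid, the composite $X^{\log} \to X \to Y$ sends $\uphi^{-1}\cMgp_Y$ into $(\cM_X^{\log})^{\gp}$ by $\phi^*$ followed by the canonical map, while the composite $X^{\log} \to Y^{\log} \to Y$ applies $(\phi^{\log})^*$ after embedding $\uphi^{-1}\cMgp_Y$ into $\uphi^{-1}(\cM_Y^{\log})^{\gp}$; these agree by the very definition of $(\phi^{\log})^*$ on that factor of the pushout. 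On $\alpha_Y^{-1}(\cOx{Y})$, the equality reduces again to the commutativity of the left square of \eqref{eq:prelog-virtual}, namely $\alpha_X \circ \phi_0^* = \uphi^* \circ \alpha_Y$, which is precisely what a virtual morphism of pre-log schemes provides.

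Uniqueness follows by the same pushout argument: any virtual morphism $\psi \colon X^{\log} \to Y^{\log}$ fitting into the square must have underlying morphism $\uphi$, and $\psi^*$ must agree with $\phi^*$ on the image of $\uphi^{-1}\cMgp_Y$ and with $\uphi^*$ on the image of $\uphi^{-1}\cOx{Y}$, which by the universal property determines $\psi^*$ on the pushout $\uphi^{-1}(\cM_Y^{\log})^{\gp}$. I do not anticipate a genuine obstacle in this argument; the entire content is a two-stage application of the universal property of pushouts, and the only point requiring care is the careful bookkeeping of the three layers of data ($\uphi$, $\phi_0^*$, $\phi^*$) comprising a virtual morphism of pre-log schemes, together with the way each participates in the pushout identification.
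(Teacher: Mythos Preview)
Your proof is correct and follows essentially the same approach as the paper: the paper's argument is the brief paragraph preceding the lemma, which simply invokes the pushout description of $(\cM^{\log})^{\gp}$ from \autoref{lem:logification-and-group-completion} together with the commutativity of \eqref{eq:prelog-virtual}, and you have spelled out that same argument in detail, including the bookkeeping for $\phi_0^*$ and the uniqueness.
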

\begin{corollary}\label{cor:log-to-prelog}
If $X$ is a log scheme and $Y$ is a pre-log scheme, then virtual morphisms of log schemes $X\to Y^{\log}$ are naturally in bijection with virtual morphisms of pre-log schemes $X\to Y$.
\end{corollary}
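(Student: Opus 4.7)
The statement is essentially an adjunction: logification is left adjoint to the forgetful inclusion of log schemes into pre-log schemes, with respect to virtual morphisms. Since the previous lemma already does the heavy lifting (constructing the functorial logification $\phi \mapsto \phi^{\log}$ and the commutative square relating it to $\phi$), the corollary should follow almost formally. The only auxiliary observation needed is that when $X$ is already a log scheme, the canonical morphism $X^{\log} \to X$ is an isomorphism. Indeed, by definition $\cM_X^{\log}$ is the pushout $\cOx{X} \sqcup_{\alpha_X^{-1}(\cOx{X})} \cM_X$, and when $X$ is a log scheme the identification $\alpha_X^{-1}(\cOx{X}) \cong \cOx{X}$ collapses this pushout to $\cM_X$ itself.

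With this in hand, the plan is to exhibit the bijection explicitly in both directions. In the forward direction, given a virtual morphism of pre-log schemes $\phi \colon X \to Y$, apply \autoref{prop:logification} to obtain $\phi^{\log} \colon X^{\log} \to Y^{\log}$, and then precompose with the inverse of $X^{\log} \to X$ to obtain a virtual morphism of log schemes $X \to Y^{\log}$. In the reverse direction, given a virtual morphism of log schemes $\psi \colon X \to Y^{\log}$, view it as a virtual morphism of pre-log schemes and postcompose with the canonical morphism $Y^{\log} \to Y$ to obtain $\psi' \colon X \to Y$.

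Finally, one checks the two round trips are identities. Starting from $\phi$ and going $\phi \mapsto \phi^{\log} \mapsto (\phi^{\log} \text{ composed with } Y^{\log} \to Y)$ recovers $\phi$ directly from the commutativity of the square in \autoref{prop:logification}, using $X^{\log} = X$. Starting from $\psi$ and going $\psi \mapsto \psi' \mapsto (\psi')^{\log}$, the target morphism $\psi$ itself (viewed along $X = X^{\log}$) makes the square of \autoref{prop:logification} commute with bottom arrow $\psi'$, so the uniqueness clause in \autoref{prop:logification} forces $(\psi')^{\log} = \psi$. There is no real obstacle here: the argument is entirely formal once the lemma is available, and the only mild subtlety to articulate clearly is the identification $X \cong X^{\log}$ that lets one pass from morphisms out of $X^{\log}$ to morphisms out of $X$.
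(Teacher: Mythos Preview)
Your proposal is correct and matches the paper's intent: the corollary is stated without proof, as an immediate consequence of \autoref{prop:logification}, and your argument spells out exactly the expected adjunction-style reasoning. The only point worth tightening is that the identification $X^{\log}\cong X$ for a log scheme $X$ is standard (it is the content of logification being idempotent), so you can invoke it without further ado.
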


\subsection{Pullback}\label{sec:pullback}
One particularly important application of logification is the construction of pullback log structures. Let $Y=(\uY,\cM_Y,\alpha_Y)$ be a log scheme, and let $\uphi\colon \uX\to\uY$ be a morphism of schemes. The composition
\[
\begin{tikzcd}
\uphi^{-1}\cM_Y \ar[r,"\uphi^{-1}\alpha_Y"] &[.5cm] \uphi^{-1}\cO{Y} \ar[r,"\uphi^*"] & \cO{X}
\end{tikzcd}
\]
is a pre-log structure on $\uX$. The associated log structure is called the \defn{pullback of $\cM_Y$ along $\uphi$}  and denoted by $\uphi^*\cM_Y$.

The map of schemes $\uphi$ then lifts canonically to an ordinary morphism of log schemes $(\uX,\uphi^*\cM_Y)\to Y$ that has the following universal property, which follows immediately from the definitions and \autoref{prop:logification}.
\begin{lemma}\label{lem:pullback}
If $X$ and $Y$ are log schemes, then every virtual morphism $\phi : X \to Y$ factors uniquely through the pullback $\uphi^*\cM_Y$, i.e.~there is a unique virtual morphism $X \to (\uX,\uphi^*\cM_Y)$ making the following diagram commute:
\[
\begin{tikzcd}
    X \ar[rr,"\phi"] \ar[rd,dashed] && Y \\
    &(\uX,\uphi^*\cM_Y)\ar[ur] 
\end{tikzcd}
\]
\end{lemma}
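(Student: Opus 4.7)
The plan is to use the universal property of the pushout describing the group completion of the pullback log structure, combined with a key observation that reduces the needed compatibility condition to the virtual morphism axiom for $\phi$. First I would spell out the canonical ordinary morphism $\pi \colon (\uX, \uphi^*\cM_Y) \to Y$ discussed just above the statement: its underlying scheme map is $\uphi$, and its action on group completions $\pi^* \colon \uphi^{-1}\cMgp_Y \to (\uphi^*\cM_Y)^{\gp}$ is the natural inclusion into the pushout description
\[
(\uphi^*\cM_Y)^{\gp} \cong \cOx{X} \underset{\alpha_0^{-1}(\cOx{X})}{\sqcup} \uphi^{-1}\cMgp_Y
\]
supplied by \autoref{lem:logification-and-group-completion}, where $\alpha_0 = \uphi^* \circ \uphi^{-1}\alpha_Y$ denotes the pullback pre-log structure on $\uX$.

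The crux is then the equality $\alpha_0^{-1}(\cOx{X}) = \uphi^{-1}\cOx{Y}$ as subsheaves of $\uphi^{-1}\cM_Y$. I would check this on stalks: at $x \in \uX$, the map of local rings $\uphi^*_x \colon \cO{Y,\uphi(x)} \to \cO{X,x}$ reflects units, because any non-unit of $\cO{Y,\uphi(x)}$ lies in its maximal ideal, which a local homomorphism must send into the maximal ideal of $\cO{X,x}$. Hence $\uphi^*_x \alpha_Y(f_x)$ is invertible if and only if $\alpha_Y(f_x)$ is invertible, and by the log scheme axiom $\alpha_Y^{-1}(\cOx{Y}) = \cOx{Y}$ this happens if and only if $f_x$ itself lies in $\cOx{Y,\uphi(x)}$.

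Given $\phi = (\uphi, \phi^*)$, I would construct the desired $\psi \colon X \to (\uX, \uphi^*\cM_Y)$ by taking its underlying scheme map to be $\id{\uX}$ and specifying $\psi^* \colon (\uphi^*\cM_Y)^{\gp} \to \cMgp_X$ via the pushout universal property from the two maps $\cOx{X} \hookrightarrow \cMgp_X$ (the tautological inclusion) and $\phi^* \colon \uphi^{-1}\cMgp_Y \to \cMgp_X$. Their agreement on $\alpha_0^{-1}(\cOx{X}) = \uphi^{-1}\cOx{Y}$ is precisely the commutativity of \eqref{eq:virtual-commutativity} for $\phi$, so $\psi^*$ is well defined; unraveling the composition gives $\psi^* \circ \pi^* = \phi^*$, so $\pi \circ \psi = \phi$. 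For uniqueness, any factorization must have underlying map $\id{\uX}$ (since the pullback is by construction on the same scheme), the virtual morphism diagram for it forces the restriction to $\cOx{X}$ to be the canonical inclusion, and the factorization condition forces the restriction to $\uphi^{-1}\cMgp_Y$ to be $\phi^*$; the pushout universal property then pins down the map on the whole of $(\uphi^*\cM_Y)^{\gp}$.

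The main obstacle is the key stalk-wise identification $\alpha_0^{-1}(\cOx{X}) = \uphi^{-1}\cOx{Y}$: without it the pushout compatibility would ask for agreement on a strictly larger subsheaf than what the virtual morphism diagram for $\phi$ controls, and the factorization would generally fail to exist. Once this identification is in hand, the remaining steps are formal manipulations with pushouts and group completions.
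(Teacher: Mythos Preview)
Your argument is correct and follows essentially the same path as the paper, which simply states that the lemma ``follows immediately from the definitions and \autoref{prop:logification}.'' You have unwound this by working directly with the pushout description of \autoref{lem:logification-and-group-completion} rather than invoking the packaged statement \autoref{prop:logification}/\autoref{cor:log-to-prelog}; the content is the same, since the proof of \autoref{prop:logification} is exactly this pushout argument. Your explicit stalkwise verification that $\alpha_0^{-1}(\cOx{X}) = \uphi^{-1}\cOx{Y}$ (via the fact that local ring homomorphisms reflect units, combined with the log-scheme axiom for $Y$) is the key point that the paper leaves implicit, and it is good that you isolated it.

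One small imprecision: in the uniqueness step you assert that ``any factorization must have underlying map $\id{\uX}$ (since the pullback is by construction on the same scheme),'' but the mere equality $\uphi \circ \upsi = \uphi$ does not force $\upsi = \id{\uX}$ in general. The lemma should be read (as is standard for pullback log structures) as asserting uniqueness among factorizations lying over $\id{\uX}$; with that reading your uniqueness argument is fine.
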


\begin{example}[Pullbacks to divisors]\label{ex:pullback-log-structure-to-divisor}
    Let  $\uphi \colon \uD \hookrightarrow \uX$ be the inclusion of an effective Cartier divisor.  Then the pullback $(\uD,\uphi^*\cM_{X,D})$ is identified with the Deligne--Faltings log scheme $(\uD,\varnothing,\uN)$ associated to the normal bundle $\uN := \cO{\uX}(\uD)|_{\uD}$.  Thus, if $\uE \to \uD$ and $\uF \to \uX$ are split vector bundles, then lifts of the inclusion $\uphi$ to an ordinary (resp.~virtual) morphism $(\uD,\cM_{D,\varnothing, E}) \to (\uX,\cM_{X,D,F})$ are in bijection with ordinary (resp.~virtual) monomial maps $(\uD,\varnothing,\uE) \to (\uD,\varnothing,\uN\oplus \uphi^*\uF)$. 
\end{example}

\subsection{Reduction}
We will need to understand how nilpotent functions interact with virtual morphisms when discussing differential forms. For this, let $X$ be a log scheme and let $X_\red$ denote the log scheme obtained by equipping the underlying reduced scheme $\uX_\red$ with the pullback log structure via the closed embedding $\uX_\red\hookrightarrow\uX$. 
This construction is functorial for virtual morphisms, as follows.

\begin{lemma}
    Let $\phi\colon X \to Y$ be a virtual morphism of log schemes.  Then there is a unique virtual morphism $\phi_\red \colon X_\red \to Y_\red$ making the following diagram commute:
    \[
    \begin{tikzcd}
        X_\red \ar[r,"\phi_\red"]\ar[d,hook] & Y_\red \ar[d,hook]\\
        X \ar[r,"\phi"] & Y
    \end{tikzcd}
    \]
\end{lemma}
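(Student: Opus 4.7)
The plan is to reduce the statement to the universal property of pullback log structures (\autoref{lem:pullback}) together with the universal property of the reduction of a scheme. Write $i_X : X_\red \hookrightarrow X$ and $i_Y : Y_\red \hookrightarrow Y$ for the canonical ordinary morphisms, whose underlying maps of schemes are the usual closed immersions $\uX_\red \hookrightarrow \uX$ and $\uY_\red \hookrightarrow \uY$.

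First, I would produce the underlying morphism of schemes. Since $\uX_\red$ is reduced, the universal property of scheme reduction provides a unique morphism $\uphi_\red : \uX_\red \to \uY_\red$ satisfying $i_Y \circ \uphi_\red = \uphi \circ i_X$. This forces the underlying scheme morphism of any candidate $\phi_\red$ making the square commute.

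Next, I would lift $\uphi_\red$ to a virtual morphism of log schemes. View $\phi \circ i_X$ as a virtual morphism $X_\red \to Y$ with underlying scheme morphism $i_Y \circ \uphi_\red$. By \autoref{lem:pullback}, it factors uniquely as
\[
X_\red \longrightarrow (\uX_\red, (i_Y \circ \uphi_\red)^* \cM_Y) \longrightarrow Y,
\]
where the second arrow is the canonical ordinary morphism. The functoriality of the pullback of log structures under composition of scheme morphisms identifies $(i_Y \circ \uphi_\red)^* \cM_Y \cong \uphi_\red^*(i_Y^* \cM_Y) = \uphi_\red^* \cM_{Y_\red}$, so the canonical factorization passes through $Y_\red$: the second arrow decomposes as $(\uX_\red, \uphi_\red^* \cM_{Y_\red}) \to Y_\red \xrightarrow{i_Y} Y$, where the first map is the canonical ordinary morphism associated to $\uphi_\red$. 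Composing the factorizing morphism $X_\red \to (\uX_\red, \uphi_\red^* \cM_{Y_\red})$ with the canonical map to $Y_\red$ yields the desired virtual morphism $\phi_\red : X_\red \to Y_\red$, and by construction $i_Y \circ \phi_\red = \phi \circ i_X$.

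Uniqueness is then immediate from the two universal properties invoked: the underlying scheme morphism of any competitor $\phi_\red'$ is forced by the reduction of schemes, and then $i_Y \circ \phi_\red' = \phi \circ i_X$ together with the uniqueness clause of \autoref{lem:pullback} pins down $\phi_\red'$ uniquely. The only nontrivial input in this plan is the functoriality of the pullback log structure under composition of scheme morphisms, i.e.\ the isomorphism $(g \circ f)^* \cM \cong f^* g^* \cM$; this I would verify directly from the pushout description of logification recorded in \eqref{eq:logification-pushout}, and I expect it to be the main (though modest) technical point if one wished to turn the sketch into a complete proof.
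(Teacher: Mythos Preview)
Your argument is correct and complete modulo the functoriality of pullback log structures under composition, which is indeed standard.  However, your route differs from the paper's.  The paper isolates a single algebraic observation: an element of $\cO{}$ is invertible if and only if its image in $\cO{\red}$ is invertible, so for the pre-log structure $\alpha_\red \colon \cM \to \cO{\red}$ one has $\alpha_\red^{-1}(\cOx{\red}) = \alpha^{-1}(\cOx{})$.  This immediately implies that a virtual morphism $X \to Y$ gives a virtual morphism of the induced \emph{pre}-log structures on the reductions (in the sense of \autoref{def:virtual-morph}), and one then applies logification via \autoref{prop:logification} to conclude.  Your approach instead packages everything into the universal property of pullback (\autoref{lem:pullback}) together with the isomorphism $(g\circ f)^*\cM \cong f^*g^*\cM$; it never makes the units-mod-nilpotents fact explicit.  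The paper's proof is shorter and pinpoints exactly which feature of reduction makes the lemma work, whereas yours is more categorical and would transplant verbatim to any situation where the analogue of \autoref{lem:pullback} and pullback-functoriality hold.
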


\begin{proof}
    Recall that an element $f \in \cO{}$ is invertible if and only if its image in the reduced ring $\cO{\red}$ is invertible. It follows that for the induced pre-log structure $\alpha_\red\colon \cM \to \cO{\red}$ on the reduction, we have $\alpha_\red^{-1}(\cOx{\red}) = \alpha^{-1}(\cOx{})$. Thus a virtual morphism $\phi\colon X \to Y$ induces a virtual morphism of the reduced pre-log structures, 
    giving the desired morphism $X_\red \to Y_\red$ by logification via \autoref{prop:logification}.
\end{proof}

\subsection{Products}

Let us recall the classical construction of fibre products in the category of log schemes and ordinary morphisms. Let $a_X\colon X\to S$ and $a_Y\colon Y\to S$ be ordinary morphisms of log schemes. We form the fibre product of the underlying morphisms of schemes, as follows:
\[
\begin{tikzcd}
\uX\times_{\uS}\uY \ar[r,"\underline{p}_X"]\ar[d,"\underline{p}_Y"'] \ar[rd,"\underline{p}"]&[.6cm] \uX \ar[d,"\underline{a}_X"] \\[.4cm]
\uY \ar[r,"\underline{a}_Y"'] & S
\end{tikzcd}
\]
On $\uX\times_{\underline{S}}\uY$, we have the pullback log structures from $X$, $Y$, and $S$, and we can consider the following pushout of sheaves of monoids on $\uX\times_{\underline{S}}\uY$.
\[
\begin{tikzcd}
    \underline{p}^*\cM_S \ar[r,"a_X^*"] \ar[d,"a_Y^*"'] &[.6cm] \underline{p}_X^*\cM_X \ar[d,dashed] \\[.6cm]
    \underline{p}_Y^*\cM_Y \ar[r,dashed] & \cM_{X\times_S Y}
\end{tikzcd}
\]
By \cite[III, Proposition 1.1.3, Proposition 2.1.2]{Ogus}, this produces a log structure on $X\times_S Y$, and (ordinary) morphisms of log schemes $p_X\colon X\times_S Y\to X$ and $p_Y\colon X\times_S Y\to Y$, making $X\times_S Y$  the fibre product of $X$ and $Y$ in $\LogSch$. It is also the fibre product in the category of virtual morphisms:

\begin{lemma}\label{lem:products}
Let $X\to S$ and $Y\to S$ be ordinary morphisms of log schemes. Then $X\times_S Y$ is the fibre product of $X$ and $Y$ over $S$ in the category $\WLogSch$.
\end{lemma}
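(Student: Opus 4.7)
The plan is to verify the universal property of the fibre product directly, by using the fact that group completion is a left adjoint (to the inclusion of abelian groups into commutative monoids) and therefore commutes with all colimits, including pushouts and the inverse image functor $\uphi^{-1}$.  Given a log scheme $Z$ together with virtual morphisms $f \colon Z \to X$ and $g \colon Z \to Y$ such that the compositions $a_X \circ f = a_Y \circ g$ agree, I want to construct a unique virtual morphism $h \colon Z \to X \times_S Y$ with $p_X \circ h = f$ and $p_Y \circ h = g$.

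First I would produce the underlying morphism $\uh \colon \uZ \to \uX \times_{\uS} \uY$ uniquely from the universal property of fibre products of schemes.  Next, I would analyse $\cMgp_{X \times_S Y}$.  The log structure $\cM_{X\times_S Y}$ is the logification of the pre-log pushout $\cN := \underline{p}_X^*\cM_X \sqcup_{\underline{p}^*\cM_S} \underline{p}_Y^*\cM_Y$ of sheaves of monoids on $\uX\times_{\uS}\uY$.  By \autoref{lem:logification-and-group-completion} and the fact that group completion commutes with pushouts, one has
\[
\cMgp_{X \times_S Y} \cong \cOx{X\times_S Y} \underset{\alpha^{-1}(\cOx{})}{\sqcup} \cNgp, \qquad \cNgp \cong \underline{p}_X^*\cMgp_X \underset{\underline{p}^*\cMgp_S}{\sqcup} \underline{p}_Y^*\cMgp_Y.
\]
Pulling back along $\uh$ (which preserves the pushouts since $\uh^{-1}$ is a left adjoint) and using $\uh^{-1}\underline{p}_X^*\cMgp_X = \uf^{-1}\cMgp_X$ and the analogous identities, the data of a compatible map $h^* \colon \uh^{-1}\cMgp_{X\times_S Y} \to \cMgp_Z$ unpacks to: (i) maps $\uf^{-1}\cMgp_X \to \cMgp_Z$ and $\ug^{-1}\cMgp_Y \to \cMgp_Z$ that agree on the common pullback of $\cMgp_S$, and (ii) compatibility with $\uh^{-1}\cOx{} \to \cOx{Z}$.

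The maps in (i) are given by $f^*$ and $g^*$, and their agreement over $S$ is precisely the hypothesis $a_X \circ f = a_Y \circ g$.  Condition (ii), which amounts to compatibility on $\alpha^{-1}(\cOx{})$ inside $\cNgp$, follows from the commutativity of the defining square \eqref{eq:virtual-commutativity} for $f$ and $g$, together with the fact that $\uh$ is compatible with $\uf$ and $\ug$ on $\cOx{}$.  Applying the universal property of the pushout twice (once for $\cNgp$, and once for adjoining $\cOx{X\times_S Y}$) yields the unique morphism $h^*$, and hence the unique virtual morphism $h$.  The only mild subtlety — and the step worth double-checking — is verifying that all the universal-property applications are compatible with the inclusions of $\cOx{}$, but this reduces to a diagram chase using the fact that the maps $p_X, p_Y, a_X, a_Y$ are ordinary morphisms and so automatically respect the invertible parts of the monoid sheaves.
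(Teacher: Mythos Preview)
Your approach is correct and essentially the same as the paper's: construct the underlying scheme map from the universal property of $\uX\times_{\uS}\uY$, then build the monoid map from the pushout universal property, using that the two given virtual morphisms agree over $S$. The paper is slightly more direct---it works with the map $\underline{\xi}^*\cM_{X\times_S Y}\to\cMgp_Z$ induced from $\uphi^*\cM_X\to\cMgp_Z$ and $\upsi^*\cM_Y\to\cMgp_Z$ (invoking \autoref{lem:pullback} implicitly) rather than passing through the logification lemma---but the content is identical.

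One small point to tighten: your identity $\uh^{-1}\underline{p}_X^*\cMgp_X = \uf^{-1}\cMgp_X$ is not literally true if $\underline{p}_X^*$ denotes the log-structure pullback (logification introduces an extra $\cOx{X\times_S Y}$ factor). It becomes correct after you push out over $\cOx{Z}$ in the end, so the argument survives, but it would be cleaner either to work with the sheaf pullbacks $\underline{p}_X^{-1}\cM_X$ throughout (using \autoref{cor:log-to-prelog} to reduce to the pre-log level), or to follow the paper and use that pullback of log structures commutes with pushouts, so that $\underline{\xi}^*\cM_{X\times_S Y}$ is itself the pushout of $\uphi^*\cM_X$ and $\upsi^*\cM_Y$ over the pullback of $\cM_S$.
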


If $S$ has trivial log structure, then the structure maps $X \to S$ of all objects of $\WLogSch_S$ are ordinary by \autoref{lem:virtual-mor-to-scheme}, and hence we have the following.
\begin{corollary}
    If $S$ is an ordinary scheme, then the category $\WLogSch_S$ has all finite products.
\end{corollary}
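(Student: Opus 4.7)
The plan is to verify the universal property of the fibre product directly in $\WLogSch$. Given any log scheme $Z$ with a pair of virtual morphisms $\psi_X \colon Z \to X$ and $\psi_Y \colon Z \to Y$ that agree after composition with the structure maps to $S$, I would construct a unique virtual morphism $\psi \colon Z \to X \times_S Y$ whose compositions with the projections $p_X, p_Y$ recover $\psi_X$ and $\psi_Y$. On underlying schemes, the map $\upsi \colon \uZ \to \uX \times_{\uS} \uY$ is uniquely determined by the universal property of ordinary fibre products, so the problem reduces to constructing a unique pullback on group-completed monoid sheaves $\psi^* \colon \upsi^{-1}\cMgp_{X\times_S Y} \to \cMgp_Z$, compatibly with $\cOx{Z}$.

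The heart of the proof is to exhibit $\upsi^{-1}\cMgp_{X\times_S Y}$ as a pushout of sheaves of abelian groups on $\uZ$ whose universal property supplies exactly the data we need. By construction, $\cM_{X\times_S Y}$ is the pushout in log structures of $\up_X^*\cM_X$ and $\up_Y^*\cM_Y$ over $\up^*\cM_S$, equivalently the logification of the analogous pushout of pre-log structures. Both group completion (from sheaves of monoids to sheaves of abelian groups) and inverse image $\upsi^{-1}$ are left adjoints and thus preserve pushouts. The only subtle point is commuting group completion past the logification, which is handled by \autoref{lem:logification-and-group-completion}: it expresses the group completion of a logified pre-log structure as an amalgamation of the ordinary group completion with $\cOx{}$. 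Combining these compatibilities, one obtains $\upsi^{-1}\cMgp_{X\times_S Y}$ as a single pushout on $\uZ$ amalgamating $\uphi_X^{-1}\cMgp_X$, $\uphi_Y^{-1}\cMgp_Y$, and $\cOx{Z}$ over $\uphi_S^{-1}\cMgp_S$ and the corresponding invertible subsheaves.

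The data of $\psi_X$ and $\psi_Y$ supply compatible maps $\uphi_X^{-1}\cMgp_X \to \cMgp_Z$ and $\uphi_Y^{-1}\cMgp_Y \to \cMgp_Z$; the hypothesis that they agree over $S$ gives compatibility on $\uphi_S^{-1}\cMgp_S$; and the $\cOx{}$-compatibility built into the definition of virtual morphism forces agreement with the tautological inclusion $\cOx{Z} \hookrightarrow \cMgp_Z$ on the invertible parts. This assembles into a cocone on the pushout diagram above, and the universal property yields the unique $\psi^*$, completing the construction of $\psi$. The subsequent corollary that $\WLogSch_S$ has all finite products when $S$ is ordinary then follows immediately: the structure maps $X \to S$ and $Y \to S$ in $\WLogSch_S$ are ordinary by \autoref{lem:virtual-mor-to-scheme}, so the lemma applies to give binary products, and $S$ itself serves as the terminal object.

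The main obstacle is the interplay between logification and group completion: the group completion of a logified pre-log structure is not simply the group completion of the underlying pre-log monoid, but an amalgamation with $\cOx{}$. Without \autoref{lem:logification-and-group-completion} to pin this down, it is not obvious how to describe $\cMgp_{X\times_S Y}$ directly in terms of $\cMgp_X$, $\cMgp_Y$, and $\cMgp_S$. Once that description is in hand, the remainder is a formal diagram chase exploiting the colimit-preservation of left adjoints.
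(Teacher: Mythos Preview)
Your proposal is correct and matches the paper's approach: the deduction of the Corollary from \autoref{lem:products} via \autoref{lem:virtual-mor-to-scheme} (together with $S$ serving as terminal object) is exactly the paper's one-line argument. Most of your proposal in fact re-proves \autoref{lem:products}, which is stated and proved immediately before the Corollary; the paper's proof of that lemma is slightly more direct in that it maps the monoid pushout $\underline{\xi}^{-1}\cM_{X\times_S Y}$ into $\cMgp_Z$ using only the universal properties of the pushout and of group completion on the target side, so it never needs to invoke \autoref{lem:logification-and-group-completion} to describe $\cMgp_{X\times_S Y}$ explicitly.
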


\begin{proof}[Proof of \autoref{lem:products}]
    Let $\phi \colon Z \to X$ and $\psi \colon Z \to Y$ be virtual morphisms of log schemes such that $a_X\circ \phi=a_Y\circ\psi$. We consider the following diagram of virtual morphisms of log schemes.  We must construct a virtual morphism $\xi$, making the  diagram commute:
    \[
    \begin{tikzcd}
    Z \ar[dr,dashed, "\xi"] \ar[drr,bend left=30,"\phi"] \ar[ddr,bend right=30,"\psi"'] && \\
    & X\times_S Y \ar[r,"p_X"] \ar[d,"p_Y"'] &[.5cm] X \ar[d,"a_X"] \\[.4cm]
    & Y \ar[r,"a_Y"']& S
    \end{tikzcd}
    \]
    To this end, let $\underline{\xi}=(\uphi,\upsi)\colon \uZ\to \uX\times_{\underline{S}}\uY$ be the morphism of schemes induced by $\uphi$ and $\upsi$.      The morphisms 
    \[
\underline{\xi}^*\underline{p}_X^*\cM_X=\uphi^*\cM_X\stackrel{\phi^*}{\longrightarrow} \cMgp_Z \quad \mbox{ and } \quad \underline{\xi}^*\underline{p}_Y^*\cM_Y=\upsi^*\cM_Y\stackrel{\psi^*}{\longrightarrow} \cMgp_Z
    \]
    induce a morphism $\xi^*\colon\underline{\xi}^*\cM_{X\times_S Y}\to \cMgp_Z$ because for every section $m$ of $\underline{p}^*\cM_S$ we have $\phi^*a_X^*(m)=\psi^*a_Y^*(m)$ by assumption. The fact that $\xi=(\underline{\xi},\xi^*)$ is a virtual morphism of log schemes follows from the fact that $\phi$ and $\psi$ are.
    One easily checks that it is the unique such virtual morphism, and the claim follows.
\end{proof}

We do not know whether all fibre products exist in $\WLogSch$. However, even when they do, the underlying scheme of the fibre product is not always the fibre product of the underlying schemes, as the following elementary example shows. This is a clear departure from the case of ordinary morphisms treated above.

\begin{example}
Recall from \autoref{ex:basepoints-of-log-point} that virtual morphisms $\pt \to \logpt$ over $\KK$ are in bijection with invertible scalars $\KK^\times$.  Let $X,Y=\pt$, equipped with the virtual morphisms to $S=\logpt$ given by scalars $\lambda,\mu \in \KK^\times$, with $\lambda\neq \mu$. Given a commutative diagram of virtual morphisms log schemes
\[
\begin{tikzcd}
Z \ar[r] \ar[d] & * \ar[d] \\[0.5cm]
* \ar[r] & \logpt
\end{tikzcd}
\]
one must have the equality $\lambda=\mu$ in $\sect{Z,\mathcal{O}_Z^\times}$, and hence $Z=\varnothing$. The fibre product of $X$ and $Y$ over $S$ therefore exists, but is empty:
\[
X\times_S Y = \varnothing.
\]
Meanwhile,  the fibre product of the underlying schemes is given by
\[
\uX \times_{\uS}\uY = \pt \times_\pt \pt \cong \pt,
\]
which is non-empty.
\end{example}

\subsection{Continuity principle}
When dealing with virtual morphisms, it is sometimes convenient to make arguments in which one uses the axiom of a virtual morphism to check a property on the locus where some function is invertible, and then deduces that the property must actually hold everywhere by continuity.  This typically works best when the scheme is reduced and irreducible, i.e.~integral, to eliminate the possibility of functions  such as $\epsilon \in \KK[\epsilon]/(\epsilon^2)$ or $x \in \KK[x,y]/(xy)$ that are neither ``generically zero'' nor ``generically invertible''.  We encapsulate this in the following.

\begin{lemma}[Continuity principle]\label{lem:continuity-principle}
    Let $\phi \colon X \to Y$ be a virtual morphism of log schemes, where $\uX$ is integral, and let $\eta \in \uX$ be the generic point.  Then for every $f \in \sect{Y,\cM_Y}$, either
    \[
    \uphi^*\alpha_Y(f) = 0 
    \]
    or $f \in \cOx{Y,\uphi(\eta)}$, and 
    \[
    \alpha_X(\phi^*f) = \uphi^*\alpha_Y(f)
    \]
    as elements of $\cO{}(X)$.
\end{lemma}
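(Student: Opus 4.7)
The plan is to use integrality of $\uX$ to force the generic stalk $\cO{X,\eta}$ to be a field (with zero maximal ideal), which creates a clean dichotomy at $\uphi(\eta)$ that matches the two alternatives of the lemma, and then to use reducedness to upgrade the resulting local equalities to global ones.

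First I would set $g := \alpha_Y(f) \in \sect{Y,\cO{Y}}$ and examine its germ $g_{\uphi(\eta)} \in \cO{Y,\uphi(\eta)}$. The induced map $\uphi^*_\eta \colon \cO{Y,\uphi(\eta)} \to \cO{X,\eta}$ is a local ring homomorphism, and since $\cO{X,\eta}$ is the function field of the integral scheme $\uX$, its maximal ideal is $(0)$. This gives the clean dichotomy: either $g_{\uphi(\eta)} \in \fm_{\uphi(\eta)}$ and hence $(\uphi^*g)_\eta = 0$ in $\cO{X,\eta}$, or else $g_{\uphi(\eta)} \in \cOx{Y,\uphi(\eta)}$ is a unit.

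In the first branch, integrality of $\uX$ provides the injection $\cO{}(X) \hookrightarrow \cO{X,\eta}$, so the vanishing of the germ at $\eta$ upgrades to the global equality $\uphi^*\alpha_Y(f) = 0$ in $\cO{}(X)$. In the second branch, the log structure axiom $\cOx{Y} \cong \alpha_Y^{-1}(\cOx{Y})$ lifts the invertibility of $g$ at $\uphi(\eta)$ to $f_{\uphi(\eta)} \in \cOx{Y,\uphi(\eta)}$ as the desired submonoid of $\cM_{Y,\uphi(\eta)}$. This invertibility spreads to an open neighborhood $V \ni \uphi(\eta)$ on which $f|_V \in \cOx{Y}(V)$, and the commutative square \eqref{eq:virtual-commutativity} then pins down
\[
(\phi^*f)|_U \;=\; \uphi^*(f|_V) \;\in\; \cOx{X}(U),
\]
where $U := \uphi^{-1}(V) \ni \eta$ is dense in $\uX$ by irreducibility. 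Applying $\alpha_X$ (which on $\cOx{X}$ is just the inclusion into $\cO{X}$) yields $\alpha_X(\phi^*f)|_U = \uphi^*\alpha_Y(f)|_U$ as sections of $\cO{X}(U)$.

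The main obstacle is the final global step: a priori $\phi^*f$ is only a section of $\cMgp_X$, so the expression $\alpha_X(\phi^*f)$ need not extend naturally to a global section of $\cO{X}$. The resolution is that the right-hand side $\uphi^*\alpha_Y(f) \in \cO{}(X)$ is already a global section, and integrality of $\uX$ makes the restriction $\cO{}(X) \to \cO{}(U)$ to any dense open injective (both rings inject into the function field $\cO{X,\eta}$); hence the unique global section of $\cO{X}$ restricting to $\alpha_X(\phi^*f)|_U$ on $U$ is $\uphi^*\alpha_Y(f)$, establishing the equality as claimed.
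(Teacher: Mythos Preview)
Your argument is correct and follows the same core idea as the paper: use that $\cO{X,\eta}$ is a field so that $\uphi^*\alpha_Y(f)$ is either zero or a unit at $\eta$, and in the latter case locality of $\uphi^*_\eta$ forces $\alpha_Y(f)$---hence $f$ via the log structure axiom---to be a unit at $\uphi(\eta)$. The paper's own proof stops there (two sentences), leaving the equality $\alpha_X(\phi^*f) = \uphi^*\alpha_Y(f)$ and its globalization implicit; your careful treatment of spreading to an open $V$, invoking \eqref{eq:virtual-commutativity}, and using the injection $\cO{}(X)\hookrightarrow\cO{}(U)$ makes explicit what the paper glosses over.
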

\begin{proof}
The local ring $\cO{X,\eta}$ is a field, so if $\uphi^*\alpha_Y(f)$ is nonzero, it is a unit in $\cO{X,\eta}$.  Therefore $\alpha_Y(f)$ must be a unit in $\cO{Y,\uphi(\eta)}$. 
\end{proof}

The following examples shows that the hypothesis of integrality is essential:
\begin{example}\label{ex:fat-log-point}
    Let $\KK$ be a field and let $X := \Spec{t^\NN \to k[\epsilon]/(\epsilon^2);t\mapsto \epsilon}$,  be the log scheme over $\KK$ obtained by pulling back the standard log structure on $\logline$ to the first-order neighbourhood of $0$.  Note that  $\epsilon \notin \cOx{X}$.  Hence, for any $a,b \in \KKx$, there is a unique virtual morphism $\phi_{a,b} \colon X \to X$ such that
  \begin{align*}
      \uphi_{a,b}^*(\epsilon) &= a\epsilon &\phi_{a,b}^*(t) = bt.
  \end{align*}
   Note that $\phi_{a,b}$ is an ordinary morphism if and only if $a=b$.   
   On the other hand, if $a \neq b$, the conclusion of the continuity principle (\autoref{lem:continuity-principle}) fails, since $\uphi_{a,b}^*\alpha(t) = \uphi_{a,b}^*\epsilon = a\epsilon$ is nonzero, but is also not equal to $\alpha(\phi_{a,b}^*(t)) = \alpha(bt) = b\epsilon$.
\end{example}

\begin{example}
    Let $Y = \Spec{t^\NN \hookrightarrow k[z];t\mapsto z}$ be the log line and let $X = \Spec{w^\NN \hookrightarrow \KK[x,y]/(xy);w\mapsto x-y}$. Geometrically, $X$ is the log scheme given by equipping the coordinate axes in $\AF^2$ with the divisorial log structure associated to the pullback of the diagonal $\{x=y\} \subset \AF^2$.  
    
    Consider the maps of rings and monoids defined by
    \begin{align*}
    \uphi^* z &= x \in \cO{X} & \phi^* t = w \in \cM_X
    \end{align*}
    Note that we have
    \begin{align*}
    \uphi^*\alpha_Y(t) &= \uphi^*z = x & \alpha_X\phi^*(t) = \alpha_X(w) = x-y.
    \end{align*}
    We claim that $\phi$ is a virtual morphism.  Indeed, for this we need only check that $\alpha_X\phi^*(t)=\uphi^*\alpha_Y(t)$ over the locus where $t \in \cOx{Y}$.  But this locus is precisely the open set $U:= \{z \neq 0\} \subset \AF^1$, whose preimage $V := \phi^{-1}(U) \subset X$ is the $x$-axis punctured at the origin.  Hence $y$ vanishes on $V$ so that $\uphi^*\alpha_Y(t)$ and $\alpha_X\phi^*(t)$ agree there.

    Now observe that  $X$ has two generic points $\eta_1$ and $\eta_2$, corresponding to the $y$-axis (where $x=0$) and the $x$-axis (where $y=0$), respectively.  At $\eta_1$ we have $\uphi^*\alpha_Y(t) = 0 \neq \alpha_X\phi^*(t)$, while at  $\eta_2$ we have $\alpha_X\phi^*t = w = x = \uphi^*\alpha_Y z$, so that $\eta_1$ and $\eta_2$ exhibit the two distinct possibilities in the continuity principle.  
\end{example}

\section{Tangential basepoints as virtual points}
\label{sec:tangential-basepoints}

In this section, we connect the notion of virtual morphism with Deligne's notion of tangential basepoint from \cite[\S15]{Deligne1989}. A similar discussion in the setting of manifolds with corners can be found in \cite{DPP:Integrals}. Throughout this section, we consider a pair $(\uX,\uD)$ where $\uX$ is a regular scheme over a field $\KK$ and $\uD\subset \uX$ is a normal crossing divisor, which for simplicity we assume to be strict. We equip $\uX$ with the corresponding divisorial log structure and denote the resulting log scheme by $X$. It is such that $\alpha_X\colon \cM_X\to \cO{X}$ is the inclusion of the subsheaf of regular functions that are invertible on $\uX\setminus \uD$.  Recall that if $j\colon \uX\setminus \uD\hookrightarrow \uX$ denotes the open immersion, then $\cMgp_X = j_*\cOx{X\setminus D}$ is the sheaf of rational functions that are regular and invertible on $\uX \setminus \uD$.

\subsection{Tangential basepoints as virtual morphisms}
The divisor $\uD$ gives a stratification
\[
\uX = \uX_0 \sqcup \uX_1 \sqcup \uX_2 \sqcup \cdots 
\]
by regular locally closed subschemes, where $\uX_j$ is the locus where $j$ irreducible components of $\uD$ meet. 
If $x \in \uX_j$, then $\uD$ is defined locally by an equation of the form $z_1\cdots z_j = 0$ where $z_1,\ldots,z_j \in \cO{X,x}$ are functions whose differentials are linearly independent at $x$, and $\uX_j$ is locally identified with the locus where $z_1=\cdots =z_j=0$.
 
Associated to any $\KK$-point $x \in \uX(\KK)$ is its \defn{normal space} $N_x(\uX,\uD)$, defined as the Zariski normal space of the unique stratum through $x$, i.e.~
\[
N_x(\uX,\uD) := \tb[x]\uX / \tb[x]\uX_j \quad \mbox{ where } x \in \uX_j(\KK). 
\]
 Let $\uD_1,\ldots,\uD_j$ denote the local branches of $\uD$ passing through $x$, and let $N_{x}(\uX,\uD_i) = \tb[x]{\uX}/\tb[x]{\uD_i}$ denote their normal spaces, each of which is a one-dimensional vector space over $\KK$.  Then the natural projection
\begin{align}
    N_x(\uX,\uD) \to \bigoplus_{i=1}^j N_{x}(\uX,\uD_i) \label{eq:normal-splitting}
\end{align}
is an isomorphism.

\begin{definition}
    A \defn{normal vector of $(\uX,\uD)$ at $x$} is an element of the normal space $N_x(\uX,\uD)$.  It is called \defn{inward pointing} if its projection to $N_{x}(\uX,\uD_i)$ is nonvanishing for every local branch $\uD_i$ of $\uD$ at $x$.  We denote the set of inward-pointing normal vectors by $N^\circ_x(\uX,\uD)$.
\end{definition}

In local coordinates as above, $\uD_i$ is given by the equation $\{z_i=0\}$ and a normal vector of $(\uX,\uD)$ at $x$ has the form
\begin{equation}\label{eq:normal-vector}
v=v_1\partial_{z_1}|_x+\cdots +v_j\partial_{z_j}|_x
\end{equation}
with each $v_i\in\KK$. It is inward-pointing if and only if $v_i\in\KK^\times$ for all $i$. More intrinsically, the isomorphism \eqref{eq:normal-splitting} implies that the set of inward-pointing normal vectors of $(\uX,\uD)$ at $x$ is a torsor for the group $(\KK^\times)^j$.

 \begin{definition}\label{def:tangential-basepoints}
     A \defn{tangential basepoint} of $(\uX,\uD)$ is a pair $(x,v)$ where $x \in \uX(\KK)$ is a $\KK$-point and $v \in N^\circ_x(\uX,\uD)$ is an inward-pointing normal vector based at $x$.
 \end{definition}

\begin{remark}
For $x\in \uX_0(\KK)=\uX(\KK)\setminus \uD(\KK)$, we are in the $j=0$ case and there is a unique normal vector of $(\uX,\uD)$ at $x$, which is inward-pointing. Therefore a $\KK$-point of $\uX\setminus \uD$ is a special case of a tangential basepoint of $(\uX,\uD)$.
\end{remark}

\begin{example}
Let $\uX=\AF^1$ with coordinate $z$, and $\uD=\{0\}$. A tangential basepoint of $(\AF^1,\{0\})$ is either a $\KK$-point of $\AF^1\setminus\{0\}$ or a non-zero tangent vector at $0$, denoted by $c\,\partial_z|_0$ with $c\in\KK^\times$.
\end{example}

Recall that $X$ denotes the log scheme over $\KK$ defined by the pair $(\uX,\uD)$. Virtual morphisms are a natural geometric language to talk about tangential basepoints, in the following sense.

\begin{theorem}\label{thm:tangential-basepoints}
    Virtual morphisms $\pt \to X$ of log schemes over $\KK$ are in canonical bijection with tangential basepoints of $(\uX,\uD)$.
\end{theorem}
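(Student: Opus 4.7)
The plan is to unpack the definition of a virtual morphism $\phi \colon \pt \to X$ explicitly and reduce everything to a local calculation at the image point. By \autoref{lem:virtual-mor-to-scheme}, such a morphism amounts to a $\KK$-point $x := \uphi(\pt) \in \uX(\KK)$ together with a group homomorphism $\phi^* \colon \cMgp_{X,x} \to \KKx$ extending the evaluation map on units, since $\cMgp_\pt = \KKx$ and the sheaf morphism $\uphi^{-1}\cMgp_X \to \cMgp_\pt$ is determined by its value on the stalk at $x$. First I would analyze this stalk: if $\uD_1, \ldots, \uD_j$ are the local branches of $\uD$ through $x$, cut out by regular functions $z_1, \ldots, z_j \in \cO{X,x}$, then $\cM_{X,x} = \cOx{X,x} \cdot z_1^\NN \cdots z_j^\NN$, giving a short exact sequence
\[
1 \longrightarrow \cOx{X,x} \longrightarrow \cMgp_{X,x} \longrightarrow \ZZ^j \longrightarrow 1
\]
whose quotient is canonically identified with the free abelian group on the branches $\{\uD_1, \ldots, \uD_j\}$, independent of the choice of the $z_i$.

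Pushing out along evaluation and using that $\ZZ^j$ is free, the set of extensions $\phi^*$ with underlying point $x$ forms a nonempty torsor under $\Hom{\ZZ^j, \KKx} = (\KKx)^j$. By the decomposition \eqref{eq:normal-splitting}, inward-pointing normal vectors at $x$ form another $(\KKx)^j$-torsor, with $v = v_1 + \cdots + v_j$ and each $v_i \in N_x(\uX, \uD_i) \setminus \{0\}$. I would define the canonical bijection by sending $\phi$ to $(x, v)$ where the components are uniquely determined by
\[
\langle v_i, [z_i] \rangle = \phi^*(z_i) \in \KKx, \qquad i = 1, \ldots, j,
\]
with $[z_i]$ generating the conormal fibre dual to $N_x(\uX, \uD_i)$. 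The inverse construction is immediate: given $(x, v)$, set $\phi^*(z_i) := \langle v_i, [z_i] \rangle$ and extend by evaluation on units, which determines $\phi^*$ on all of $\cMgp_{X,x}$ because the $z_i$ together with $\cOx{X,x}$ generate this group; commutativity of the diagram in \autoref{def:log-virtual-morph} then holds by construction.

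The step requiring the most care is the verification that both constructions are independent of the choice of local defining equations, which is what makes the bijection \emph{canonical} rather than merely an identification of two abstract $(\KKx)^j$-torsors. If $z_i$ is replaced by $u_i z_i$ with $u_i \in \cOx{X,x}$, then $\phi^*(z_i)$ is multiplied by $u_i(x)$ while the conormal basis vector $[z_i]$ is rescaled by the same factor, so the associated normal vector $v_i$ is unchanged; the converse direction is analogous. Intrinsically, this amounts to the statement that the two torsor structures are controlled by the same datum, namely the canonical pairing between the monomial characters $\cMgp_{X,x}/\cOx{X,x} \cong \ZZ^j$ and the normal space $N_x(\uX,\uD)$.
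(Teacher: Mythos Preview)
Your proposal is correct and takes essentially the same approach as the paper: both unpack the definition to identify virtual morphisms $\pt \to X$ with group homomorphisms from (the group completion of) the log structure at $x$ to $\KKx$ extending evaluation on units, then match these with inward-pointing normal vectors via local defining equations $z_1,\ldots,z_j$ and verify coordinate-independence. The only quibble is that your citation of \autoref{lem:virtual-mor-to-scheme} is misplaced---that lemma concerns morphisms \emph{to} a scheme with trivial log structure, not \emph{from} one---whereas the reduction you actually perform is just the definition of virtual morphism (the paper instead invokes \autoref{lem:pullback} to pass to the pullback log structure $\cM_X|_x$ on $\Spec\KK$, which amounts to the same thing as your stalk computation after pushing out along evaluation $\cOx{X,x}\to\KKx$).
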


\begin{proof}
According to \autoref{lem:pullback}, a virtual morphism $*\to X$ is the same thing as the data of a rational point $x\in \uX(\KK)$, viewed as a morphism of $\KK$-schemes $x\colon \Spec{\KK}\to \uX$, along with a virtual morphism from $*$ to the pullback of the log structure,
\begin{equation}\label{eq:virtual-morphism-from-proof-tangential-basepoint}
(\Spec{\KK},\cOx{\Spec{\KK}}\to \cO{\Spec{\KK}}) \longrightarrow (\Spec{\KK},\cM_X|_x\to \cO{\Spec{\KK}}),
\end{equation}
whose underlying morphism of schemes is the identity of $\Spec{\KK}$. Since the normal crossing divisor $\uD$ is strict, one sees that the log structure $\cM_X|_x$ is constant, i.e. induced by a constant pre-log structure. Indeed, if $z_1,\ldots,z_j$ are local defining equations for the components of $\uD$ near $x$ as above, then
\[
\sect{\Spec{\KK},\cM_{X}|_x} \cong \KK^\times z_1^\NN z_2^\NN\cdots z_j^\NN
\]
and $\cM_X|_x$ is induced by the constant pre-log structure
$$\NN^j \to \sect{\Spec{\KK},\cM_X|_x} \; , \; (k_1,\ldots,k_j)\to z_1^{k_1}\cdots z_j^{k_j}.$$
It is therefore (non-canonically) isomorphic to $(\logpt)^j$. Thus, following \autoref{ex:virtual-mor-to-log-pt}, a virtual morphism \eqref{eq:virtual-morphism-from-proof-tangential-basepoint} is equivalent to the datum of a group homomorphism 
\begin{equation}\label{eq:virtual-morphism-from-proof-tangential-basepoint-global}
\sect{\Spec{\KK},\cMgp_X|_x} \to \KK^\times
\end{equation}
which acts as the identity on $\KK^\times$. This is given by a collection of $j$ non-zero scalars $(v_1,\ldots,v_j)\in (\KK^\times)^j$, with $v_i$ the image of $z_i$.  Associating to this tuple the tangential basepoint $v_1\partial_{z_1}|_x+\cdots +v_j\partial_{z_j}|_x$ gives a bijection between virtual morphisms $*\to X$ and tangential basepoints, as claimed.

To check that the bijection is independent of choices of local coordinates for $\uD$, we recast it in a more coordinate-free way as follows. For a point $x\in \uX(\KK)$, write $N_x:=N_x(\uX,\uD)$ and $N_{x,i}:=N_x(\uX,\uD_i)$ for each local branch $\uD_i$ of $\uD$ at $x$. The decomposition \eqref{eq:normal-splitting} gives a dual decomposition $N^\vee_x \cong \bigoplus_i N^\vee_{x,i}$ of the conormal space, and we denote by 
\[
M_x \subset \mathrm{Sym}_\KK(N_x^\vee) = \cO{}(N_x)
\]
the monoid of regular functions on $N_x$ that are monomial with respect to this decomposition, i.e.\ which can be written as products of element of $\KK^\times$ and invertible elements of the conormal lines $N^\vee_{x,1},\ldots,N^\vee_{x,j}$ of the divisor components.  In coordinates,
\[
M_x = \KK^\times (\dd z_1|_x)^{\NN} \cdots (\dd z_j|_x)^{\NN}.
\]
The morphism of monoids $\alpha\colon M_x \to \KK$ given by the evaluation at the origin in $N_x$, i.e.\ by $\alpha(\dd z_i|_x)=0$, induces a (constant) log structure on the point $\Spec{\KK}$, non-canonically isomorphic to $(\logpt)^j$. If $f$ is a section of $\cM_X$ in a neighbourhood of $x$, then its leading Taylor monomial in the normal direction is naturally an element of $M_x$, and this gives a canonical isomorphism
\[
\sect{\Spec{\KK},\cM_{X}|_x} \cong M_x
\]
of log structures on $\Spec{\KK}$; it is given concretely by
\[
\lambda \, z_1^{k_1} \cdots z_j^{k_j} \mapsto \lambda (\dd z_1|_x)^{k_1} \cdots (\dd z_j|_x)^{k_j} 
\]
for $\lambda\in\KK^\times$ and $k_1,\ldots,k_j \ge 0$. Therefore, a morphism \eqref{eq:virtual-morphism-from-proof-tangential-basepoint-global} is equivalent to a morphism of monoids $M_x\to \KK^\times$ which acts as the identity on $\KK^\times$. Clearly, such a morphism of monoids is equivalent to a linear map $N_x^\vee\to \KK$ whose restriction to each conormal line is non-zero. This is the same thing as a tangential basepoint of $(\uX,\uD)$ at $x$.
\end{proof}

\subsection{More general tangential basepoints as virtual points}\label{sec:general-basepoints}
The term ``tangential basepoint'' is typically used in the context of a (strict) normal crossing divisor in a smooth variety over a field.  \autoref{thm:tangential-basepoints} suggests that ``virtual morphisms from a point'' give a natural notion of tangential basepoints in more general situations.  For instance, this notion makes sense for a more general base, for normal crossing divisors that are not necessarily strict, and for varieties that are singular. We therefore make the following general definition.

\begin{definition}[Virtual point]\label{def:virtual-point}
Let $\pi\colon X\to S$ be a virtual morphism of log schemes. A \defn{virtual $S$-point} of $X$ is a virtual morphism $x\colon S\to X$ such that $\pi\circ x=\id{S}$. We denote by $X(S)$ the set of virtual $S$-points of $X$, or equivalently:
$$X(S) = \mathrm{Hom}_{\WLogSch_S}(S,X).$$
\end{definition}

We note that virtual points behave as expected with respect to fibre products (of ordinary morphisms) thanks to \autoref{lem:products}.

\begin{corollary}\label{cor:monoidal-functor-of-points}
For ordinary morphisms $X,Y \to S$, we have bijections
\[
(X\times_S Y)(S)\cong X(S)\times Y(S).
\]
that are natural in $X,Y$ and $S$.
\end{corollary}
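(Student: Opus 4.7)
The plan is to deduce this directly from the universal property of fibre products in $\WLogSch_S$ established in \autoref{lem:products}, together with the definition of virtual $S$-points.

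First I would unpack the definitions. A virtual $S$-point of $X \times_S Y$ is, by \autoref{def:virtual-point}, a virtual morphism $s \colon S \to X \times_S Y$ whose composition with the structure map $X \times_S Y \to S$ equals $\id{S}$. Composing $s$ with the two projections $p_X \colon X \times_S Y \to X$ and $p_Y \colon X \times_S Y \to Y$ produces virtual morphisms $f := p_X \circ s \colon S \to X$ and $g := p_Y \circ s \colon S \to Y$. Because the projections commute with the structure maps to $S$, and because the composition $(X \times_S Y) \to S$ is the identity by assumption on $s$, both $f$ and $g$ are sections of $X \to S$ and $Y \to S$ respectively; that is, elements of $X(S)$ and $Y(S)$.

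Conversely, given $f \in X(S)$ and $g \in Y(S)$, i.e.\ virtual morphisms with $a_X \circ f = \id{S} = a_Y \circ g$, the pair $(f,g)$ in particular satisfies $a_X \circ f = a_Y \circ g$, so by \autoref{lem:products} applied with $Z = S$ there exists a unique virtual morphism $\xi \colon S \to X \times_S Y$ with $p_X \circ \xi = f$ and $p_Y \circ \xi = g$. Composing with the structure map $X \times_S Y \to S$ gives a morphism $S \to S$ whose further composition with each of $a_X$ and $a_Y$ is the identity; by the uniqueness part of the universal property in $\WLogSch_S$ (applied now to the single morphism $S \to S$), this composition must be $\id{S}$, so $\xi$ lies in $(X \times_S Y)(S)$. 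The two constructions are mutually inverse by the uniqueness clauses.

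Naturality in $X$, $Y$, and $S$ is then formal: any morphism of such data induces compatible maps of Hom-sets, and the bijection just constructed is defined purely in terms of the universal property. The only point requiring a moment's thought is the base-change behaviour in $S$, but since virtual $S$-points are by definition morphisms in the slice category $\WLogSch_S$ and the fibre product is preserved by the forgetful functor to $\WLogSch$ in the relevant sense, this is immediate. I do not anticipate any real obstacle here; the statement is essentially a restatement of \autoref{lem:products} in the language of functors of virtual points.
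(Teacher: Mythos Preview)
Your approach is correct and matches the paper's: the corollary is an immediate consequence of \autoref{lem:products}, and the paper does not give any further argument beyond noting this. One small point: your verification that $\xi$ is a section is garbled---the structure map $X\times_S Y\to S$ already equals $a_X\circ p_X$, so $(a_X\circ p_X)\circ\xi=a_X\circ f=\id{S}$ directly, with no need to invoke uniqueness.
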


We now illustrate the notion of virtual point with some examples.

\subsubsection{Examples of tangential basepoints over $\ZZ$}\label{sec:basepoints-on-P1}

    Let $X$ be the divisorial log scheme over $\ZZ$ associated to the pair $(\uX,\uD) := (\PP^1_\ZZ,\{0,1,\infty\})$, which is the setting in which tangential basepoints were first defined \cite{Deligne1989}. 
    Note that $\uX\setminus\uD=\PP^1\setminus\{0,1,\infty\}$ does not have any $\ZZ$-points.  Let $z$ be the standard coordinate on $\AF^1 = \PP^1 \setminus \{\infty\}$, and let $w = z^{-1}$ be the standard coordinate at infinity.  Since $\ZZ^\times = \{\pm 1\}$, one sees by adapting the proof of \autoref{thm:tangential-basepoints} that $X$ has exactly two virtual $\ZZ$-points at each of the three points $0,1,\infty \in \PP^1(\ZZ)$, namely the ``unit'' tangential basepoints
    \begin{equation}\label{eq:six-tangential-basepoints}
    \pm \cvf{z}|_{0} \in \tb[0]{\PP^1_\ZZ} \qquad \pm \cvf{z}|_{1} = \mp \cvf{w}|_1 \in \tb[1]{\PP^1_\ZZ} \qquad \pm \cvf{w}|_\infty \in \tb[\infty]{\PP^1_\ZZ}.
    \end{equation}
    These form a torsor for the automorphism group $\mathrm{Aut}(X)\cong \mathfrak{S}_3$. The following lemma says that there are no other tangential basepoints for $X$.

    \begin{lemma}\label{lem:virtual-Z-points-of-projective-line-three-points}
    The $6$ tangential basepoints \eqref{eq:six-tangential-basepoints} are the only virtual $\ZZ$-points of the divisorial log scheme $(\PP^1_\ZZ,\{0,1,\infty\})$.
    \end{lemma}

    \begin{proof}
    Assume that there exists a virtual morphism $s:\Spec{\ZZ}\to (\PP^1_\ZZ,\{0,1,\infty\})$ whose underlying morphism $\underline{s}:\Spec{\ZZ}\to \PP^1_\ZZ$ is different from $0$, $1$, $\infty$, and denote this point by $[a:b]$ with $a,b\in \ZZ$ coprime and $a,b,a-b\neq 0$. Let $S$ be the finite set of prime numbers $p$ such that $p$ divides one of $a, b, a-b$, those three cases being mutually exclusive. One easily sees that $S$ is non-empty (this is the same thing as saying that $\PP^1_\ZZ\setminus \{0,1,\infty\}$ does not have any $\ZZ$-points). Geometrically, $S$ is the set of closed points of $\Spec{\ZZ}$ over which $\underline{s}$ intersects one of the three sections $0,1,\infty$ of the structure morphism $\PP^1_\ZZ\to \Spec{\ZZ}$. Via $\underline{s}$, the divisorial log structure $(\PP^1_\ZZ,\{0,1,\infty\})$ pulls back to the divisorial log structure $(\Spec{\ZZ},S)$, and $s$ factors through a virtual morphism of log schemes $f:\Spec{\ZZ}\to (\Spec{\ZZ},S)$ whose underlying morphism of schemes is the identity. Consider a prime $p\in S$, viewed as a global section of the structure sheaf of monoids for $(\Spec{\ZZ}, S)$. The element $f^*(p)$ is necessarily $\pm 1$, but because $p$ is invertible outside of the closed point $p$, we necessarily have $f^*(p)=p$ in the localization $\ZZ_{(p)}$. This is a contradiction, and the claim follows. 
    \end{proof}

\subsubsection{Examples of tangential basepoints for a non-strict normal crossing divisor}

Let $X$ be the log scheme over $\RR$ associated to the pair $(\uX,\uD)$ where $\uX=\mathbb{A}^2_\RR$ with coordinates $(u,v)$ and $\uD=\{u^2+v^2=0\}$ in $\uX$. It is a normal crossing divisor that only becomes strict after base change to $\CC$ since $\uD_\CC=\{(u+v\iu)(u-v\iu)=0\}$, where $\iu$ is a fixed square root of $-1$. 
Let us classify virtual $\RR$-points of $X$ whose underlying point of $\uX$ is $x=(0,0)$. The log structure $\cM_X|_x$ has sections on $\Spec{\RR}$ and $\Spec{\CC}$ given by
\[
\RR^\times (u^2+v^2)^\NN \quad \mbox{ and } \quad \CC^\times (u+v\iu)^\NN(u-v\iu)^\NN
\]
respectively. It is therefore not induced by a constant pre-log structure as in the proof of \autoref{thm:tangential-basepoints}. A tangential basepoint for $(\uX,\uD)$ at $x$ is equivalent to a $\operatorname{Gal}(\CC/\RR)$-equivariant morphism of monoids $\CC^\times (u+v\iu)^\NN(u-v\iu)^\NN\to \CC^\times$ acting as the identity on $\CC^\times$, i.e., is a complex tangential basepoint
\[
\lambda \, \partial_{u+v\iu}|_{(0,0)} + \mu \, \partial_{u-v\iu}|_{(0,0)} \quad \mbox{ with } \lambda=\overline{\mu}\in\CC^\times.
\]
Writing $\mu=\frac{1}{2}(a+b\iu)$, one can rewrite the latter as the real tangent vector
\[
a\,\partial_u|_{(0,0)} + b\,\partial_v|_{(0,0)} \quad \mbox{ with } (a,b)\in\RR^2\setminus\{(0,0)\}.
\]
Therefore, we see that a virtual $\RR$-point of $X$ at $x$ is the same thing as a non-zero vector in $T_xX=\RR^2$, as one could have expected.

\subsubsection{Examples of tangential basepoints for singular varieties}

Let $\uX$ be a variety (possibly singular) over a field $\KK$ and $\uD \subset \uX$ be a reduced effective Cartier divisor, and let $X$ denote the log scheme over $\KK$ associated to the pair $(\uX,\uD)$.
Note that for a point $x \in \uX(\KK)$, the pullback $\cM_X|_x$ is generated over $\KKx$ by \'etale local equations for the branches of $\uD$, which may also be viewed as generators of the conormal space of the stratum of $\uD$ through $x$ (i.e.~the intersection of all \'etale-local components of $\uD$ passing through $x$).  Hence a virtual
point of $X$ at $x$ is equivalent to a normal vector to each branch of $\uD$ through $x$, potentially satisfying some constraints.

\begin{example}
Let $\uX = \{uv=w^2\} \subset \AF^3$ and let $\uD\subset \uX$ be the divisor given by the vanishing of $uv$, or equivalently $w^2$, and $x = (0,0,0)$.  Then $\cM_{X}|_x$ is generated by the restrictions $u_x,v_x,w_x$  of $u,v,w$, subject to the relation $u_xv_x=w_x^2$.  
A virtual point of $X$ at $x$ is thus equivalent to a tangent vector
\[
a \,\cvf{u}|_x + b\,\cvf{v}|_x + c\,\cvf{w}|_x\in \tb[x]{\uX},
\]
where $a,b,c \in \KKx$ are nonzero constants such that $ab=c^2$.  In other words, a tangential basepoint at the origin in $(\uX,\uD)$ is a point in the tangent cone of $x \in \uX$ that does not lie in the tangent cone of $x \in \uD$.
\end{example}

\begin{example}
Let $L_1,L_2,L_3 \subset \AF^2$ be distinct lines through the origin, cut out by the vanishing of linear forms $\ell_1,\ell_2$ and $\ell_3$, respectively.  Let $\uD = L_1+L_2+L_3$ be the divisor given by their union, $X$ the log scheme over $\KK$ associated to the pair $(\AF^2,\uD)$, and $x$ the point $(0,0)$. Then $\cM_X|_x = \KKx \ell_1^\NN \ell_2^\NN\ell_3^\NN$ is the monoid freely generated over $\KKx$ by the germs of $\ell_1,\ell_2$ and $\ell_3$.   A virtual point of $X$ at $x$ is thus equivalent to a tangential basepoint relative to each irreducible component of $\uD$. The set of tangential basepoints at the origin is therefore given by $\prod_{j=1}^3 N^\circ_x(\AF^2,L_j)$, which is non-canonically isomorphic to  $(\KKx)^3$; in particular, it is three-dimensional.

Note, in contrast, that for a normal crossing divisor in a smooth surface, the maximal dimension of a space of tangential basepoints through any point is two; hence not every 
virtual point of $X$ can be lifted to a normal crossing resolution of $(\uX,\uD)$.  
\end{example}

\subsection{Inclusions of strata}\label{sec:strata}

Higher-dimensional analogues of tangential basepoints for a strict normal crossing divisor $(\uX,\uD)$ are obtained by considering higher-dimensional strata in $\uX$ rather than individual points, as follows.

Let $i\colon \uY \to \uX$ be a locally closed immersion, given locally by the inclusion of an intersection of components of $\uD$.  Note that the other components of $\uD$ then induce a normal crossing divisor $\uD_Y \subset \uY$.  We thus have two natural log structures on the $\KK$-scheme $\uY$:
\begin{definition}
    We denote by $Y = (\uY,\cM_Y,\alpha_Y)$ the divisorial log scheme associated to the normal crossing divisor $\uD_Y \subset \uY$, and  by $\widehat{X}^\circ_Y$ the log scheme defined by the pullback log structure $\cM_{\widehat{X}^\circ_Y} := i^*\cM_X$ on $\uY$.
\end{definition}
As the notation is meant to suggest, the log scheme $\widehat{X}^\circ_Y$ plays the role of a punctured tubular neighbourhood of $\uY$ in $\uX$.  More precisely, the normal bundle $N_{\uY} \uX = i^*\tb[\uX]/\tb[\uY]$ comes equipped with a canonical normal crossing divisor $N_{\uY}\uD \subset N_{\uY}\uX$, namely the normal cone of $\uD$ along $\uY$.  This divisor defines a smooth log scheme $N^\circ_Y X$, and the restriction of its log structure to the zero section is canonically identified with $\cM_{\widehat{X}^\circ_Y}$, by extracting the leading Taylor monomials as in the previous subsection.  Put differently, $N_{\uY}\uX$ carries a canonical splitting as a sum of the normal bundles of the branches of $\uD$ containing $\uY$, and $\widehat{X}^\circ_Y$ is the Deligne--Faltings log structure associated to the triple $(\uY,\uD_{\uY},N_{\uY}\uX)$.

These relationships are summarized by the following diagram of ordinary morphisms of log schemes in $\LogSch_\KK$:
\begin{equation}
\begin{tikzcd}
    N_{\uY}\uX \setminus N_{\uY}\uD \ar[r,hook]\ar[d,twoheadrightarrow] & N^\circ_Y X \ar[d,twoheadrightarrow]& \widehat X^\circ_Y \ar[l,hook']\ar[r,hook] & X & \uX \setminus \uD \ar[l,hook'] \ar[d,hook']\\
    \uY \setminus \uD_{Y}\ar[r,hook] & Y\ar[rrr,hook] &&& \uX
\end{tikzcd}\label{eq:punctured-neighbourhood}
\end{equation}
Using that virtual morphisms factor uniquely through pullback log structures, and maps of divisorial log schemes are given by maps of the underlying schemes that respect the interiors, we deduce the following.

\begin{proposition}\label{prop:virtual-inclusions-of-strata}
Lifts of $i\colon \uY \to \uX$ to a virtual morphism $Y \to X$ are in bijection with inward pointing sections of the normal bundle of $\uY\setminus\uD_{\uY}$.
\end{proposition}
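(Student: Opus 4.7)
The plan is to reduce the statement to a calculation inside the Deligne--Faltings framework, by combining the universal property of the pullback log structure with \autoref{prop:morphisms-as-monomial-maps}.

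First, I would apply \autoref{lem:pullback} to the morphism of schemes $i\colon \uY \to \uX$: every virtual morphism $Y \to X$ with underlying map $i$ factors uniquely through the pullback log structure $\widehat{X}^\circ_Y = (\uY, i^*\cM_X)$. So the set of lifts of $i$ is in bijection with the set of virtual morphisms $\phi\colon Y \to \widehat{X}^\circ_Y$ over $\uY$, i.e.\ those with underlying map $\id{\uY}$.

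Second, I would invoke the identification, stated in the paragraph preceding the proposition, that $\widehat{X}^\circ_Y$ is the Deligne--Faltings log scheme associated to the triple $(\uY,\uD_Y, N_{\uY}\uX)$, equipped with its canonical projection to the divisorial log scheme $(\uY, \cM_{Y,D_Y}) = Y$. The object $Y$ itself is the Deligne--Faltings log scheme associated to $(\uY,\uD_Y,0)$, where $0$ is the zero vector bundle (interior equal to $\uY$), so both $Y$ and $\widehat{X}^\circ_Y$ live naturally in the slice over the divisorial log scheme $(\uY,\cM_{Y,D_Y})$. Any virtual morphism $\phi$ produced by the first step automatically lies in this slice: on the group completion, $\phi^*$ restricted to $\cMgp_{Y,D_Y}$ sends a rational section $f$, invertible on $\uY\setminus \uD_Y$, to a section of $\cMgp_Y$ whose image under $\alpha_Y^{\gp}$ is $i^*f$ itself, hence equals $i^*f$ by injectivity of $\cM_Y \hookrightarrow \cMgp_Y$ (the divisorial log structure being integral).

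Third, \autoref{prop:morphisms-as-monomial-maps} converts such a $\phi$ into a virtual monomial map
\[
    \varphi\colon (\uY,\uD_Y, 0) \longrightarrow (\uY, \uD_Y, N_{\uY}\uX).
\]
By definition, this is a monomial morphism of $\uY|_{\uY\setminus \uD_Y}$-schemes from the interior of the zero bundle, which is $\uY|_{\uY \setminus \uD_Y}$ itself, to the interior $(N_{\uY}\uX)^\circ|_{\uY\setminus \uD_Y}$. Such a map is simply a section of $(N_{\uY}\uX)^\circ$ over $\uY\setminus \uD_Y$; and a section of the interior, by the local formula~\eqref{eq:monomial-map-open} with empty source torus, is exactly a section of $N_{\uY}\uX|_{\uY\setminus \uD_Y}$ whose component in each normal line of the splitting is nowhere vanishing, i.e.\ an inward-pointing section of the normal bundle over $\uY\setminus \uD_Y$.

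The main point requiring care is the second step, namely that the factorization through $\widehat{X}^\circ_Y$ is automatically a morphism over the divisorial log scheme $(\uY,\cM_{Y,D_Y})$; once this compatibility is checked, the remainder reduces to unpacking the definition of a virtual monomial map for the trivial source torus, which is a routine calculation.
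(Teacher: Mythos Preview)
Your argument is correct and follows the same route the paper sketches in the sentence preceding the proposition: factor through the pullback log structure $\widehat{X}^\circ_Y$ via \autoref{lem:pullback}, identify the latter as the Deligne--Faltings log scheme $(\uY,\uD_Y,N_{\uY}\uX)$, and read off the answer in terms of monomial maps. Your only slip is cosmetic: there is no map ``$\alpha_Y^{\gp}$'' in general, so the second step should be phrased by restricting to $\uY\setminus\uD_Y$, where any $f\in\cM_{Y,D_Y}$ becomes invertible and the virtual-morphism axiom forces $\phi^*f=f$ there, hence everywhere since $\cMgp_Y=j_*\cOx{\uY\setminus\uD_Y}$.
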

In other words, such a morphism is the assignment of a tangential basepoint to every point of $\uY\setminus \uD_Y$, in a regularly varying fashion.

\section{Kato-Nakayama spaces and Betti cohomology}
\label{sec:Betti}

In this section, we discuss the interaction of virtual morphisms and tangential basepoints with Kato--Nakayama's logarithmic analogue of the space of complex points equipped with the classical analytic topology \cite{KatoNakayama}.

\subsection{The associated topological space}

 We will be interested in log schemes $X$ that have the following  further property.  Recall that a monoid is $M$ is \defn{fs} (for ``\defn{fine and saturated}'') if it is finitely generated, integral ($fg=fh\implies g=h$ holds for all $f,g,h\in M$, i.e.~$M$ embeds inside its group completion $M^{\mathrm{gp}}$), and saturated (for $f\in M^{\mathrm{gp}}$ and $n\ge 1$, $f^n\in M \implies f\in M$). Recall that a log scheme $X$ is \defn{fs} if (\'etale) locally the log structure is the logification of a constant pre-log structure $M_X\to\cO{X}$ for an fs monoid $M$. The log point and the log line (\autoref{ex:log-line} and \autoref{ex:log-point}) have this property, as do divisorial and Deligne--Faltings log structures, (\autoref{ex:divisorial-log-structures} and \autoref{defi:DF-log-structures}). We denote by
\[
\fsWLogSch_\KK \subset \WLogSch_\KK
\]
the full subcategory of fs log schemes of finite type over $\KK$ and virtual morphisms.

In \cite[Section 1]{KatoNakayama} Kato--Nakayama associate, to any $X\in \fsWLogSch_\CC$, a topological space that we denote\footnote{Kato--Nakayama use the notation $X^{\log}$, which in this article denotes the logification of a pre-log scheme; see \autoref{subsec:logification}.} by $\KN{X}$; see also \cite[V.1]{Ogus}. We recall the relevant aspects of the construction.

\begin{definition}
    A \defn{KN-point of $X$} is a pair $(x,\lambda)$ where $x \in \uX(\CC)$ is a complex point of $X$ and  $\lambda\colon \cMgp_{X,x} \to \unitcirc$ is a  homomorphism to the unit circle $\unitcirc \subset \CC$, such that
    \begin{align}
    \lambda(f) = \frac{f(x)}{|f(x)|} \label{eqn:KN-point}
    \end{align}
    for all $f \in \cOx{X,x}$. 
  The set of KN-points of $X$ is called the \defn{Kato-Nakayama space of $X$} and denoted by $\KN{X}$.
\end{definition}

There is a natural map of sets
    \begin{align}
    \begin{array}{rccc}
    \tau = \tau_X \colon & \KN{ X} &\to &\uX(\CC) \\
     & (x,\lambda) & \mapsto& x.
    \end{array}
    \label{eq:def-tau}
    \end{align}
 If $f \in \cM_X$ is a local section with domain $U \subset \uX(\CC)$, then we may define functions $|f|\colon \tau^{-1}(U) \to \halfspc{}$ and $\arg f\colon \tau^{-1}(U) \to \unitcirc$ by the formulae
    \[
    |f|(x,\lambda) := |\alpha(f)(x)| \qquad (\arg f)(x,\lambda) := \lambda(f).
    \]
    Then we equip $\KN{X}$ with the coarsest topology for which the map $\tau$  and the locally defined functions  $\arg f$ are continuous for all $f \in \cM_X$.  Note that the functions $|f|$ are then automatically continuous, since they are given by the absolute value of the continuous $\CC$-valued function $\alpha(f)$ on $\uX(\CC)$

    Note that since the condition \eqref{eqn:KN-point} only makes reference to invertible functions, it is naturally compatible with the notion of virtual morphism. 
 Namely, if $X,Y \in \fsWLogSch_\CC$, and $\phi \colon X \to Y$ is a virtual morphism, we obtain a map of sets
    \[
    \mapdef{\KN{\phi}}{\KN{X}}{\KN{Y}}
    {(x,\lambda)}{(\uphi(x),\lambda\circ\phi^*)}
    \]
    such that $\KN{\phi}^*\arg(f) = \arg(\phi^*f)$ for all $f \in \cM_Y$, and the following diagram commutes:
    \[
    \begin{tikzcd}
        \KN{X} \ar[d]\ar[r,"\KN{\phi}"] &\KN{Y}\ar[d] \\
 \uX(\CC)\ar[r,"\uphi"]&\uY(\CC).
    \end{tikzcd}
    \]
    It follows immediately that $\KN{\phi}$ is a continuous map.  Furthermore, it is evident that $\KN{\id{X}} = \id{\KN{X}}$ and $\KN{\phi \circ \psi} = \KN{\phi}\circ\KN{\psi}$.
\begin{proposition}
    There is a canonical functor from $\fsWLogSch_\CC$ to the category of topological spaces,  sending a virtual morphism $\phi\colon X \to Y$ of log schemes to the induced continuous map $\KN{\phi} \colon \KN{X} \to \KN{Y}$.  This functor preserves products, i.e.~the natural map $\KN{X\times_\CC Y}\to\KN{X}\times\KN{Y}$ is an isomorphism.
\end{proposition}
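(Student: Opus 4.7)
The plan is to establish functoriality first, then product preservation.

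For functoriality, essentially all the work has been carried out in the paragraphs preceding the statement. Given a virtual morphism $\phi \colon X \to Y$, the assignment $(x,\lambda) \mapsto (\uphi(x), \lambda \circ \phi^*)$ is well-defined because the compatibility of $\phi^*$ with units on the commuting square \eqref{eq:virtual-commutativity} ensures that $\lambda \circ \phi^*$ satisfies condition \eqref{eqn:KN-point}. Continuity of $\KN{\phi}$ follows at once from $\tau_Y \circ \KN{\phi} = \uphi(\CC) \circ \tau_X$ and $\KN{\phi}^* \arg(f) = \arg(\phi^* f)$ for all local sections $f \in \cM_Y$, since these are precisely the continuous functions that generate the topology on $\KN{Y}$. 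Preservation of identities and composition is a direct check from the formula.

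For product preservation, the projections $\pi_X \colon X \times_\CC Y \to X$ and $\pi_Y \colon X \times_\CC Y \to Y$ are ordinary morphisms, so by the functoriality above they induce a continuous map
\[
\Phi \colon \KN{X \times_\CC Y} \longrightarrow \KN{X} \times \KN{Y}.
\]
I would show $\Phi$ is a bijection by applying the stalkwise description of $\cM_{X \times_\CC Y}$ from \autoref{lem:products}. Since the base $\Spec{\CC}$ has trivial log structure, the stalk $\cMgp_{X \times_\CC Y, (x,y)}$ is the pushout of $\cMgp_{X,x}$ and $\cMgp_{Y,y}$ over $\CC^\times$ in the category of abelian groups. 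By the universal property of this pushout, homomorphisms to $\unitcirc$ extending the map $f \mapsto f/|f|$ on units correspond bijectively to pairs of such homomorphisms on the factors; the compatibility over $\CC^\times$ is automatic from the unit constraint \eqref{eqn:KN-point}. Combined with the fact that $\uX \times_\CC \uY$ is the fibre product of the underlying schemes, this gives a bijection.

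Finally, I would show that $\Phi^{-1}$ is continuous by verifying that the generating family of continuous functions on $\KN{X \times_\CC Y}$ pulls back to continuous functions on $\KN{X} \times \KN{Y}$. Locally, every section of $\cM_{X \times_\CC Y}$ is, by the logified pushout construction, a product $u \cdot \pi_X^* f \cdot \pi_Y^* g$ with $u$ invertible, $f \in \cM_X$, and $g \in \cM_Y$, so that $\arg(u \cdot \pi_X^*f \cdot \pi_Y^*g) = \arg(u) + \pi_X^*\arg(f) + \pi_Y^* \arg(g)$ is continuous on $\KN{X} \times \KN{Y}$. Together with continuity of $\tau_{X \times Y}$ (inherited from $\tau_X \times \tau_Y$ on the underlying product of complex points), this shows $\Phi$ is a homeomorphism.

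The main subtlety is the identification of the log structure on $X \times_\CC Y$, in particular knowing that fs-ness is preserved without requiring a saturation step: since the base has trivial log structure, at the level of characteristic monoids we obtain $\overline{\cM}_{X,x} \oplus \overline{\cM}_{Y,y}$, a direct sum of fs monoids, which is again fs. Everything else is a careful but routine translation between pushouts on one side and universal properties of homomorphisms to $\unitcirc$ on the other.
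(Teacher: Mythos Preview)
Your treatment of functoriality matches the paper exactly: the proposition is stated as a summary of the preceding paragraphs, with no separate proof, and you correctly identify those paragraphs as doing all the work.

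For product preservation, the paper gives no argument at all---it simply asserts the claim, relying on the fact that it is classical for ordinary morphisms (Kato--Nakayama, Ogus) and that nothing in the construction of $\KN{X\times_\CC Y}$ uses the maps $\alpha$. Your proposal is therefore more detailed than what the paper provides.

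That said, there is a genuine imprecision in your bijection argument. The stalk $\cMgp_{X\times_\CC Y,(x,y)}$ is \emph{not} the pushout of $\cMgp_{X,x}$ and $\cMgp_{Y,y}$ over $\CC^\times$: the pushout defining the product log structure is taken over $p^*\cM_S = \cOx{X\times_\CC Y}$, and at a stalk this is $\cOx{X\times Y,(x,y)}$, which is strictly larger than $\CC^\times$ whenever the underlying schemes have positive dimension (for instance $1+zw$ is a unit in a two-variable local ring that is not a product $f(z)g(w)$). The fix is easy and in the spirit of what you wrote: the KN condition \eqref{eqn:KN-point} pins down $\lambda$ uniquely on \emph{all} of $\cOx{X\times Y,(x,y)}$, not just on the constants, and this determination is automatically compatible with the KN conditions for $\lambda_X$ and $\lambda_Y$ via the ring maps $\cOx{X,x},\cOx{Y,y}\to\cOx{X\times Y,(x,y)}$. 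Equivalently, use the exact sequence $1\to\cOx{}\to\cMgp\to\overline{\cM}^{\mathrm{gp}}\to 0$: KN-points over a fixed $\CC$-point form a torsor under $\Hom(\overline{\cM}^{\mathrm{gp}},\unitcirc)$, and since $\overline{\cM}^{\mathrm{gp}}_{X\times Y,(x,y)}\cong \overline{\cM}^{\mathrm{gp}}_{X,x}\oplus\overline{\cM}^{\mathrm{gp}}_{Y,y}$, the map $\Phi$ is an equivariant map of isomorphic torsors, hence a bijection. Your continuity argument for $\Phi^{-1}$ then goes through unchanged.
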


\begin{definition}
    The \defn{Betti cohomology} of an fs log scheme $X$ of finite type over $\CC$ is the singular cohomology of its Kato--Nakayama space:
    \[
    \HB{X} := \coH[\bullet]{\KN{X};\ZZ}.
    \]
\end{definition}

Combining the symmetric monoidal functoriality of $\KN{-}$ with that of singular cohomology, we deduce the following.
\begin{corollary}
    Betti cohomology $\HB{-}$ (resp.~$\HB{-}\otimes \QQ$) is a lax (resp.~strong) symmetric monoidal functor  from $\fsWLogSch_\CC$ to the category of graded abelian groups (resp.~$\QQ$-vector spaces).
\end{corollary}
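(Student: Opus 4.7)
The plan is to deduce this as a formal consequence of the preceding proposition combined with standard facts about singular cohomology. The preceding proposition provides a strong symmetric monoidal functor
$$\KN{-}: \fsWLogSch_\CC \longrightarrow \Top,$$
with respect to the Cartesian monoidal structures on both sides; here the unit $\Spec{\CC}$ with trivial log structure has $\cMgp_{\Spec{\CC}}$ trivial, so the only KN-point is the tautological one and $\KN{\Spec{\CC}}$ is a single point. On the other side, singular cohomology defines a lax symmetric monoidal functor $\coH[\bullet]{-;\ZZ}: \Top^{\mathrm{op}} \to \mathbf{grAb}$ via the external cross product; at the chain level this is induced by the Eilenberg--Zilber shuffle map, and its associativity, unitality (with $\coH[\bullet]{\pt;\ZZ}=\ZZ$), and graded commutativity under the symmetry are classical.

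Composing the two functors yields the desired lax symmetric monoidal structure on Betti cohomology, with product
$$\HB{X} \otimes \HB{Y} \xrightarrow{\times} \coH[\bullet]{\KN{X}\times\KN{Y};\ZZ} \xrightarrow{\sim} \HB{X\times_\CC Y},$$
the second arrow being the inverse of the isomorphism from the proposition. The coherence axioms transport automatically: the associator, unitor, and symmetry isomorphisms for $\HB{-}$ are the composites of those for the Cartesian structure on $\Top$ (pulled back by $\KN{-}$) with those for the cross product, both of which satisfy the hexagon and pentagon axioms separately.

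For the rational version, I would tensor with $\QQ$ and invoke the Künneth theorem in its field-coefficient formulation: the cross product $\coH[\bullet]{A;\QQ}\otimes_\QQ \coH[\bullet]{B;\QQ} \to \coH[\bullet]{A\times B;\QQ}$ is an isomorphism provided one of the factors has degreewise finite-dimensional rational cohomology. This is where the finite type hypothesis enters: for any $X \in \fsWLogSch_\CC$ the Kato--Nakayama space $\KN{X}$ has the homotopy type of a finite CW complex (it is, for instance, a compact topological manifold with corners whenever $X$ is proper and ideally smooth), so its rational cohomology is finitely generated in each degree, and the lax cross product becomes an isomorphism. The only substantive point requiring care is this finiteness assertion for $\KN{X}$, which is standard in Kato--Nakayama theory and can be cited from, e.g., Ogus's book; beyond this, the corollary is a purely formal consequence of the two well-known monoidal functorialities.
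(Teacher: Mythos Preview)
Your argument is correct and follows the same route as the paper, which simply states that the corollary follows by ``combining the symmetric monoidal functoriality of $\KN{-}$ with that of singular cohomology'' without further elaboration. You have supplied the details the paper omits, in particular the finiteness of $\HB{X}$ needed for the K\"unneth map over $\QQ$ to be an isomorphism; this is indeed standard for fs log schemes of finite type, since $\KN{X}$ fibres over the finite-type analytic space $\uX(\CC)$ with compact torus fibres and is therefore a finite CW complex (or at least has finitely generated cohomology in each degree).
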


\subsection{Positive log structures}
As explained by Gillam--Molcho in \cite[Theorem 6.8.1]{GillamMolcho}, the Kato--Nakayama space has the richer structure of a ``positive log differentiable space''.  By the latter, we mean a locally ringed space  $(\Sig,\Cinf{\Sig})$ that is locally isomorphic to the vanishing set of a collection of smooth functions on $\RR^n$, equipped with a sheaf of monoids $\cM_\Sig$ and a morphism of sheaves of monoids $\alpha_\Sig \colon \cM_{\Sig} \to \Cinfge{\Sig}$, for which the induced map $\alpha_{\Sig}^{-1}(\Cinfpos{\Sig}) \to \Cinfpos{\Sig}$ is an isomorphism; here
\[
\Cinfpos{\Sig}\subset\Cinfge{\Sig}\subset\Cinf{\Sig}
\]
denote the submonoids of positive- and non-negative-valued functions, respectively.  Ordinary and virtual morphisms between positive log differentiable spaces are defined in exactly the same way as for log schemes, substituting $\Cinfge{\Sig}$ for $\cO{X}$ and $\Cinfpos{\Sig}$ for $\cOx{X}$.
    
If $X$ is an fs log scheme of finite type over $\CC$, we endow $\KN{X}$ with the structure of a positive log differentiable space in the minimal way so that that the map $\tau : \KN{X} \to \uX(\CC)$ and the locally defined functions $\set{\arg(f)}{f \in \cMgp_X}$ are smooth. In particular, this means that we have a canonical map
    \begin{align}
    \mapdef{|\alpha_X|}{\tau^{-1}\cM_X}{ \Cinfge{\KN{X}}}
    {f }{ \tau^*|\alpha_X(f)|} \label{eq:KN-prelog}
    \end{align}
    of sheaves of monoids, defining a positive pre-log structure on $\KN{X}$. Taking its associated positive log structure, we obtain the desired positive log differentiable space $(\KN{X},\cM_{\KN{X}},\alpha_{\KN{X}})$.  Note that if $f \in \tau^{-1}{\cM_X}$, then $\tau^*|\alpha_X(f)| > 0$ if and only if $f \in \tau^{-1}\cOx{X}$.   In other words, $|\alpha_X|^{-1}(\Cinfpos{\KN{X}}) = \tau^{-1}\cOx{X}$. Hence we have concretely that 
    \[
    \cM_{\KN{X}} = \Cinfpos{\KN{X}} \underset{\tau^{-1}\cOx{X}}{\sqcup} \tau^{-1}\cM_X
    \]
    with group completion
    \[
    \cMgp_{\KN{X}} =  \Cinfpos{\KN{X}} \underset{\tau^{-1}\cOx{X}}{\sqcup} \tau^{-1}\cMgp_X.
    \]
    From these formulae, it is immediate that a virtual morphism $\phi : X \to Y$ of fs log schemes over $\CC$ has a unique lift to a virtual morphism of the positive log structures on the Kato--Nakayama spaces respecting the maps $\tau$ and the maps $\tau^{-1}\cMgp_{-} \to \cMgp_{\KN{-}}$, giving the following result.
    \begin{proposition}
        The assignment $(X,\cM_X,\alpha) \mapsto (\KN{X},\cM_{\KN{X}},\alpha_{\KN{X}})$ defines a functor from $\fsLogSch_\CC$ (resp.\ $\fsWLogSch_\CC$) to the category of positive log differentiable spaces with ordinary (resp.\ virtual) morphisms.  This functor respects products.
    \end{proposition}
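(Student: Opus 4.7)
The plan is to reduce everything to the universal property of the two pushouts already recalled just above the statement. Most of the work is done: the underlying continuous map $\KN{\phi}\colon \KN{X}\to\KN{Y}$ was constructed from a virtual morphism $\phi\colon X\to Y$ in the previous proposition, and the square $\tau_Y \circ \KN{\phi} = \uphi\circ \tau_X$ commutes by construction. It therefore remains to produce the pullback map $\KN{\phi}^{-1}\cMgp_{\KN{Y}} \to \cMgp_{\KN{X}}$ on group-completed positive log structures, compatibly with the maps from $\Cinfpos{\KN{-}}$, and to check that this lies inside the subsheaf $\cMgp_{\KN{X}}$ (rather than in some larger group).

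First, I would assemble two partial maps. The ordinary pullback of smooth functions restricts to $\KN{\phi}^{-1}\Cinfpos{\KN{Y}}\to \Cinfpos{\KN{X}}$. Separately, the virtual morphism datum $\phi^*\colon \uphi^{-1}\cMgp_Y\to \cMgp_X$ pulls back via $\tau_X$, using the natural identification $\tau_X^{-1}\uphi^{-1} = \KN{\phi}^{-1}\tau_Y^{-1}$, to a map $\KN{\phi}^{-1}\tau_Y^{-1}\cMgp_Y \to \tau_X^{-1}\cMgp_X$. To glue these into a single morphism out of the pushout
$$
\cMgp_{\KN{Y}} = \Cinfpos{\KN{Y}} \underset{\tau_Y^{-1}\cOx{Y}}{\sqcup} \tau_Y^{-1}\cMgp_Y,
$$
I need them to agree on sections $f \in \KN{\phi}^{-1}\tau_Y^{-1}\cOx{Y}$. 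Along one route $f$ becomes $\phi^*f \in \tau_X^{-1}\cOx{X}$ and is then viewed in $\Cinfpos{\KN{X}}$ via its argument and modulus on $\KN{X}$; along the other, $f$ becomes $|\alpha_Y(f)|\cdot \arg(f) \in \Cinfpos{\KN{Y}}\sqcup \unitcirc$-part determined by the KN-point condition \eqref{eqn:KN-point}, and is then pulled back by $\KN{\phi}$. These agree precisely because of \eqref{eq:virtual-commutativity}: on invertible sections $\alpha_X\circ\phi^* = \uphi^*\circ\alpha_Y$, and the KN-definition of $\arg$ on $\cOx{}$-sections is determined by the underlying holomorphic function. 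The pushout universal property then produces a unique morphism, which lies inside $\cMgp_{\KN{X}}$ by construction, and the compatibility with the positive-invertible part required of a virtual morphism of positive log structures is built into the first partial map.

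Functoriality is then automatic from the uniqueness clause in the universal property: both the pullback of smooth functions and $\phi^*$ are each individually functorial, so the glued pullbacks associated to $\id{}$ and $\phi\circ\psi$ must agree with $\id{}$ and $\KN{\phi}\circ\KN{\psi}$ respectively. The ordinary case $\fsLogSch_\CC$ reduces to the same argument applied with $\cM$ in place of $\cMgp$, using that an ordinary morphism additionally respects $\alpha$ and hence gives compatible partial data on $\cM_Y$-sections. For product preservation, the underlying topological map $\KN{X\times_\CC Y}\to \KN{X}\times\KN{Y}$ is already known to be a homeomorphism, and the log structure on $X\times_\CC Y$ is itself the pushout from \autoref{sec:pullback}; combining this with the pushout presentation of $\cMgp_{\KN{-}}$ yields matching iterated pushouts on the two sides, and the resulting isomorphism follows from the universal property.

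The main subtlety is the agreement check at the pushout step: one must carefully track how $\tau$, $\arg$, $|\cdot|$, and the sheaf pullbacks $\uphi^{-1}$ and $\KN{\phi}^{-1}$ interact on invertible holomorphic sections. Once this bookkeeping is in place, the construction is formal and the functorial and monoidal statements follow essentially for free.
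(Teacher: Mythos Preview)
Your approach is essentially the same as the paper's: the paper states that from the explicit pushout formulae for $\cM_{\KN{X}}$ and $\cMgp_{\KN{X}}$, ``it is immediate that a virtual morphism $\phi \colon X \to Y$ of fs log schemes over $\CC$ has a unique lift to a virtual morphism of the positive log structures on the Kato--Nakayama spaces respecting the maps $\tau$ and the maps $\tau^{-1}\cMgp_{-} \to \cMgp_{\KN{-}}$''. You have unpacked this immediacy via the universal property of the pushout, which is exactly the intended argument.

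One small correction in your agreement check: the map $\tau^{-1}\cOx{} \to \Cinfpos{\KN{}}$ appearing in the pushout is $f \mapsto \tau^*|\alpha(f)| = \tau^*|f|$, involving only the modulus; the argument $\arg(f)$ and the $\unitcirc$-part play no role in the \emph{positive} log structure (they enter in the definition of the KN-points and the smooth structure, but not in $\alpha_{\KN{X}}$). The check you need is therefore simpler than you wrote: for $f \in \tau_Y^{-1}\cOx{Y}$, one route gives $\KN{\phi}^*\tau_Y^*|f| = \tau_X^*|\uphi^*f|$ (using $\tau_Y \circ \KN{\phi} = \uphi \circ \tau_X$), and the other gives the image of $\phi^*f$ under $\tau_X^{-1}\cOx{X} \to \Cinfpos{\KN{X}}$, namely $\tau_X^*|\phi^*f|$. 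These agree because the virtual morphism axiom forces $\phi^*f = \uphi^*f$ on invertibles. Once this is fixed, your argument goes through as stated.
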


\subsection{KN-points vs.~virtual points}

Note that $\KN{\Spec{\CC}}$ is a point, viewed as a positive log differentiable space with the trivial positive log structure $\RR_{>0} \subset \RR$.  Consequently the functor $\KN{-}$ sends virtual points of $X$ (in the sense \autoref{def:virtual-point}) to virtual points of $\KN{X}$, i.e.~virtual morphisms from a point with trivial positive log structure to the positive log differentiable space $\KN{X}$.

Concretely, suppose that $p \in X(\CC)$ is a virtual point given by a virtual morphism $\phi : \Spec{\CC} \to X$.  It consists of a point $x \in \uX(\CC)$ and a group homomorphism $\phi^* : \cMgp_{X,x} \to \CC^\times$ whose restriction to $\cOx{\uX,x}$ is the evaluation at $x$.  Let
\begin{align*}
   \lambda := \frac{\phi^*}{|\phi^*|} : \cMgp_{X,x} \to \unitcirc \qquad \rho := |\phi^*| : \cMgp_{X,x} \to \RR_{>0} 
\end{align*}
by the polar coordinates of $\phi^*$.  Then $(x,\lambda)$ is a KN-point of $X$, and $\rho$ is equivalent to a lift of the inclusion $\{(x,\lambda)\} \hookrightarrow \KN{X}$ to a morphism of positive log differentiable spaces. 

From this description, it is immediate that the data $p=(x,\phi^*)$ and $\KN{p}=(x,\lambda,\rho)$ are equivalent, i.e.~we have the following:
\begin{proposition}\label{prop:virtual-KN-points}
    The map $p \mapsto \KN{p}$ gives a bijection between virtual $\CC$-points of $X$ and virtual points of the positive log differentiable space $\KN{X}$.
\end{proposition}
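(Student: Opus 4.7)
The plan is to unpack both sides of the bijection explicitly and recognize the correspondence as the polar decomposition $\CC^\times \cong \unitcirc \times \RR_{>0}$ applied pointwise.

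First, on the algebro-geometric side, I would invoke \autoref{lem:virtual-mor-to-scheme} together with the triviality of the log structure on $\Spec{\CC}$: a virtual $\CC$-point of $X$ is a pair $(x,\phi^*)$ consisting of a point $x \in \uX(\CC)$ and a group homomorphism $\phi^*\colon \cMgp_{X,x} \to \CC^\times$ whose restriction to $\cOx{X,x}$ is the evaluation map $f \mapsto f(x)$.

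Second, on the differentiable side, I would unpack a virtual point of $\KN{X}$. Such a virtual morphism $* \to \KN{X}$ consists of a point $(x,\lambda) \in \KN{X}$, i.e.\ a KN-point, together with a group homomorphism $\cMgp_{\KN{X},(x,\lambda)} \to \RR_{>0}$ that restricts on $\Cinfpos{\KN{X},(x,\lambda)}$ to evaluation at $(x,\lambda)$. Using the pushout description
\[
\cMgp_{\KN{X},(x,\lambda)} \;\cong\; \Cinfpos{\KN{X},(x,\lambda)} \underset{\cOx{X,x}}{\sqcup}\, \cMgp_{X,x}
\]
from the preceding paragraph together with the universal property of pushouts, this datum is equivalent to a group homomorphism $\rho\colon \cMgp_{X,x} \to \RR_{>0}$ whose restriction to $\cOx{X,x}$ is $f \mapsto |f(x)|$, together with $\lambda$ itself, satisfying the KN condition $\lambda(f) = f(x)/|f(x)|$ on $\cOx{X,x}$.

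Third, I would construct the bijection. Given $\phi^*$, set $\lambda := \phi^*/|\phi^*|$ and $\rho := |\phi^*|$; the required compatibilities on $\cOx{X,x}$ follow immediately from $\phi^*(f) = f(x)$. Conversely, given $(\lambda,\rho)$, set $\phi^* := \lambda \cdot \rho$, valued in $\unitcirc \cdot \RR_{>0} = \CC^\times$; the required compatibility $\phi^*(f) = f(x)$ on $\cOx{X,x}$ follows by multiplying the two conditions. These two assignments are manifestly mutually inverse by the polar decomposition of $\CC^\times$, so we obtain the claimed bijection $p \mapsto \KN{p} = (x,\lambda,\rho)$.

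The only subtle step is the identification of virtual points of $\KN{X}$ via the pushout formula for $\cMgp_{\KN{X}}$; everything else is a formal manipulation with the polar decomposition. There is no genuine obstacle, since both sides are already described in matching terms by the paragraph preceding the proposition.
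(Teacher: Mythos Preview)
Your proposal is correct and follows essentially the same approach as the paper: the paper's argument, given in the paragraph immediately preceding the proposition, also decomposes $\phi^*$ into its polar coordinates $\lambda := \phi^*/|\phi^*|$ and $\rho := |\phi^*|$ and asserts that the equivalence of data is then immediate. Your version makes the differentiable side slightly more explicit by invoking the pushout description of $\cMgp_{\KN{X}}$ to identify what $\rho$ must satisfy, whereas the paper simply states that $\rho$ ``is equivalent to a lift of the inclusion $\{(x,\lambda)\} \hookrightarrow \KN{X}$ to a morphism of positive log differentiable spaces''; but this is a difference in level of detail, not in substance.
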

  Meanwhile, $(x,\lambda)$ is equivalent to the datum of the virtual point $(x,\phi^*)$, up to rescaling  $\phi^*$ by a homomorphism $\cMgp_{X,x}\to\RR_{>0}$ that is trivial on the subgroup $\cOx{X,x}\subset \cMgp_{X,x}$. Thus we have the following
  \begin{lemma}\label{lem:forget-magnitude}\label{lem:rescaling-virtual-points}
  The fibre of the canonical map of sets $X(\CC) \to \KN{X}$ over a KN-point $(x,\lambda)$ is a torsor for the group $\Hom{\cMgp_{X,X}/\cOx{X,x},\RR_{>0}}$.
  \end{lemma}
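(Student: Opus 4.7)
The plan is to describe the fibre in polar coordinates on $\CC^\times$. A virtual $\CC$-point $p$ of $X$ consists of a point $x \in \uX(\CC)$ together with a group homomorphism $\phi^*\colon \cMgp_{X,x}\to\CC^\times$ whose restriction to $\cOx{X,x}$ is the evaluation at $x$. Using $\CC^\times\cong \unitcirc\times\RR_{>0}$, I would decompose $\phi^*=\lambda\cdot\rho$ with $\lambda\colon \cMgp_{X,x}\to\unitcirc$ and $\rho\colon \cMgp_{X,x}\to\RR_{>0}$. On this decomposition the map $p\mapsto \KN{p}$ forgets the radial factor, retaining only $(x,\lambda)$. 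The condition $\phi^*|_{\cOx{X,x}}=\mathrm{ev}_x$ splits across the polar factors: the constraint on $\lambda$ is precisely the KN-axiom \eqref{eqn:KN-point}, while the constraint on $\rho$ is that it extends the homomorphism $|\mathrm{ev}_x|\colon \cOx{X,x}\to\RR_{>0}$ given by $f\mapsto|f(x)|$. Therefore the fibre over a KN-point $(x,\lambda)$ is canonically identified with the set of multiplicative extensions of $|\mathrm{ev}_x|$ to $\cMgp_{X,x}$.

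Next, I would verify that this set carries a free transitive action of $H := \Hom{\cMgp_{X,x}/\cOx{X,x},\RR_{>0}}$. Given two extensions $\rho_1,\rho_2$, their ratio $\rho_1/\rho_2$ is a homomorphism $\cMgp_{X,x}\to\RR_{>0}$ that is trivial on $\cOx{X,x}$, hence descends uniquely to an element of $H$; conversely, twisting any extension by an element of $H$ produces another extension. Freeness and transitivity are then immediate.

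The one genuinely substantive point is non-emptiness of the fibre, which is the main obstacle. Here I would use that $X$ is fs: the stalk $\cM_{X,x}$ is integral, so $\cM_{X,x}\hookrightarrow\cMgp_{X,x}$, and composing with $\cOx{X,x}\hookrightarrow\cM_{X,x}$ realizes $\cOx{X,x}$ as a subgroup of the abelian group $\cMgp_{X,x}$. Since $\RR_{>0}$ is divisible, it is an injective object in the category of abelian groups, so the homomorphism $|\mathrm{ev}_x|\colon\cOx{X,x}\to\RR_{>0}$ extends (non-canonically) to all of $\cMgp_{X,x}$, producing the desired $\rho$ and hence a virtual point $(x,\lambda\cdot\rho)$ lying over $(x,\lambda)$. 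Combined with the previous paragraph, this exhibits the fibre as a torsor under $H$, as claimed.
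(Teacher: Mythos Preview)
Your proof is correct and follows the same polar-decomposition approach that the paper uses in the paragraph immediately preceding the lemma; the paper treats the statement as essentially self-evident from that discussion and gives no separate proof. Your explicit treatment of non-emptiness via injectivity of $\RR_{>0}$ is a detail the paper does not spell out, though it could equally be obtained from the observation (made just after the lemma) that $\cMgp_{X,x}/\cOx{X,x}\cong\ZZ^j$ is free, so the extension splits for any target group.
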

  In particular, since $\cMgp_{X,x}/\cOx{X,x} \cong \ZZ^j$ for some $j$, the fibre at $(x,\lambda)$ is noncanonically isomorphic to $(\RR_{>0})^j$.

\begin{example}
Let $X$ be the log scheme associated to a normal crossing divisor $\uD \subset \uX$ over $\CC$ as in \autoref{sec:tangential-basepoints}. Assume for simplicity that $\uD$ is strict.  Then $\KN{X}$ is the real oriented blowup of $\uX(\CC)$ along the irreducible components of $\uD(\CC)$; it is a manifold with corners whose boundary hypersurfaces are the $\unitcirc$-bundles associated to the normal bundles of the irreducible components of $\uD(\CC)$.  It is equipped with the positive log structure induced by the boundary defining functions, making it into a ``manifold with log corners'' in the sense of our paper \cite{DPP:Integrals}.

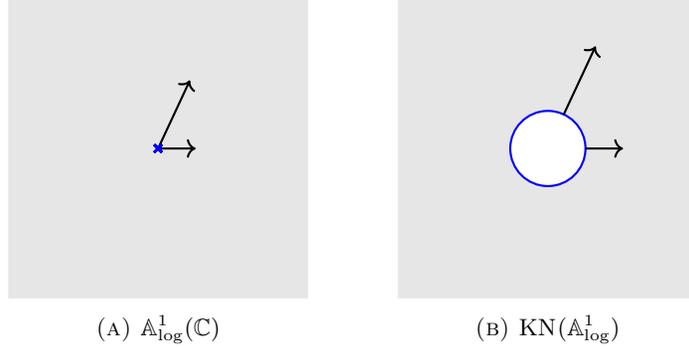
\begin{figure}
    \begin{subfigure}{0.4\textwidth}
    \centering\begin{tikzpicture}
    \draw[white,fill=white!90!black] (-2,-2) rectangle (2,2);
    \draw[->,thick] (0,0)--(0.5,0);
    \draw[->,thick] (0,0) -- (65:1);
    \draw[blue,thick] plot[mark=x] (0,0);
    \end{tikzpicture}
    \caption{$\logline(\CC)$}
    \end{subfigure}
    \begin{subfigure}{0.4\textwidth}
    \centering\begin{tikzpicture}
    \draw[white,fill=white!90!black] (-2,-2) rectangle (2,2);
    \draw[thick,blue,fill=white] (0,0) circle (0.5);
    \draw[->,thick] (0.5,0)--(1,0);
    \draw[->,thick] (65:0.5) -- (65:1.5);
    \end{tikzpicture}
    \caption{$\KN{\logline}$}
    \end{subfigure}
    \caption{Algebraic vs.~$C^\infty$ tangential basepoints at the origin in $\AF^1$.}
    \label{fig:baspoints-alg-vs-diff}
\end{figure} 

  As we explain in \cite[\S8.4]{DPP:Integrals} and illustrate in \autoref{fig:baspoints-alg-vs-diff} (copied from \emph{op.~cit.}), tangential basepoints on $X$ in the sense of \autoref{def:tangential-basepoints} correspond to tangential basepoints in the differentiable sense on $\KN{X}$.  This is a special case of \autoref{prop:virtual-KN-points}.  Namely, if $\phi \colon \Spec{\CC} \to X$ is a virtual morphism, corresponding to a tangential basepoint $(x,v)$, then $\KN{\phi} \colon \KN{\Spec{\CC}} \to \KN{X}$ is the inclusion of the point in $\KN{X}$ corresponding to the ray spanned by $v$ in the normal bundle of each irreducible component of $\uD$, equipped with a map of log structures that encodes the magnitude of the vector $v$ in each normal direction.  Thus, over a point $x \in \uX(\CC)$, the natural map $X(\CC) \to \KN{X}$ from \autoref{lem:forget-magnitude} is the quotient $N^\circ_x \to N^\circ_x / (\RR_{>0})^j$.
  \end{example}

\begin{example}\label{ex:DF-KN}
    More generally, for a Deligne--Faltings log scheme $Y = (\uX,\uD,\uE)$ associated to a split bundle $\uE = \uL_1 \oplus \cdots \oplus \uL_n$ over a normal crossing divisor $X = (\uX,\uD)$ over $\CC$, the virtual points $Y(\CC)$ are given by triples $(x,v,e)$ of a point $x \in \uX(\CC)$, a tangential basepoint $v$ for $(\uX,\uD)$ at $x$, and an element $e \in \uE^\circ|_x$.   Then $\KN{Y} \to \KN{X}$ is the $(\unitcirc)^n$ bundle associated to the pullback of the $(\CC^\times)^n$-bundle $\uE^\circ(\CC) \to \uX(\CC)$ along the blowdown map $\KN{X}\to\uX(\CC)$.  The map $Y(\CC) \to \KN{Y}$ is the quotient that takes a tangential basepoint $v$ to the corresponding normal rays as in the previous example, and takes an element $e \in \uE^\circ(\CC)$ to the element in $\uE^\circ(\CC)/(\RR_{>0})^n \cong \prod_j \uL_j(\CC)/\RR_{>0}$ giving the ray spanned by $e$ in each line of the splitting.
\end{example}

\subsection{KN-points over subrings}\label{sec:KN-points-over-subrings}
If $B \subset \CC$ is a subring and $X \in \fsLogSch_B$, we define the Kato--Nakayama space $\KN{X}$ by base change to $\CC$.  We then have canonical maps of sets $X(B) \to X(\CC) \to \KN{X}$.  The image of the composition gives a subset
\[
\KN[B]{X} \subset \KN{X},
\]
which provides a natural notion of KN-points of $X$ defined over $B$.  

\begin{example}
If $B = \RR$, the real KN-points $\KN[\RR]{X}\subset \KN{X}$ are the fixed points of the involution of $\KN{X}$ induced by complex conjugation.  This is a closed subset and has an induced positive log structure making it the fixed locus in the categorical sense; see \cite[\S8.3]{DPP:Integrals} in the case of normal crossing divisors.  
\end{example}

In light of \autoref{lem:rescaling-virtual-points}, the canonical map
\[
X(B) \to \KN[B]{X}
\]
is the quotient that identifies two virtual $B$-points whenever they differ by rescaling by a homomorphism $\cMgp \to B^\times \cap \RR_{>0}$.  In particular, in the extreme case $B = \ZZ$, we have $\ZZ^\times  \cap \RR_{>0} = \{\pm 1\} \cap \RR_{>0} = \{1\}$, so there is no rescaling freedom whatsoever, and hence the map $X(\ZZ) \to \KN[\ZZ]{X}$ is a bijection.
\begin{example}
Consider the divisorial log scheme $X = (\PP^1_\ZZ,\{0,1,\infty\})$ from \autoref{sec:basepoints-on-P1}, which has six virtual $\ZZ$-points corresponding to the ``unit'' tangential basepoints at $0,1,\infty$.  Its Kato--Nakayama space is the real oriented blowup $\widetilde{\PP^1(\CC)}$ of the Riemann sphere at the points $0,1,\infty$, and the six virtual $\ZZ$-points correspond to the six points in the boundary of $\widetilde{\PP^1(\CC)}$ indicating the positive and negative real tangent directions at 0, 1 and $\infty$. These \emph{directions} uniquely determine the corresponding \emph{vectors} because the latter have unit length in the standard coordinate on the $\PP^1$, whose scale is fixed by the $\ZZ$-structure.
\end{example}

\section{Differentials and the de Rham complex}   

We now turn to the interaction between virtual morphisms and differential forms.  Since the latter are a relative notion, we work over a fixed base log scheme $S$, i.e.~in the category $\WLogSch_S$, and eventually specialize further to the case in which $S = \Spec{\KK}$ is a field of characteristic zero.

\subsection{K\"ahler differentials}

Kato's notion \cite{Kato1989a} of logarithmic K\"{a}hler differential continues to make sense when the structure morphism is virtual, as follows.

\begin{definition}
Let $\sigma\colon X\to S$ be a virtual morphism of log schemes.
The sheaf of \defn{log K\"ahler differentials} $\forms[1]{X/S}$ is the sheaf of $\cO{X}$-modules generated by the  usual relative K\"ahler differentials $\forms[1]{\uX/\uS}$ on the underlying scheme $\uX/\uS$, and formal symbols $\mathrm{dlog}(f)$ for $f \in \cMgp_X$, modulo the following relations:
\begin{enumerate}\item The logarithmic Leibniz rule
\begin{align}
\ddlog{fg} &= \ddlog{f} + \ddlog{g} \qquad \textrm{for all }  f,g\in\cMgp_X, \label{eq:dlog-leibniz}
\end{align}
\item The consistency condition 
\begin{align}
\dd \alpha_X(f) = \alpha_X(f)\, \ddlog{f}  \qquad \textrm{for all }  f \in\cM_X, \label{eq:consistency}
\end{align}
\item The horizontality condition
\begin{align}
\ddlog{ \sigma^* s} = 0 \qquad \textrm{for all }  s \in\cMgp_S. \label{eq:horizontality}
\end{align}
\end{enumerate}
\end{definition}

Note that the consistency condition \eqref{eq:consistency} ensures that
\begin{align}
\ddlog{f} = f^{-1}\dd f \qquad \textrm{for all } f \in \cOx{X}\label{eq:virtual-consistency}
\end{align}
but it also applies to elements in $\cM_X$ that are not in $\cOx{X}$.  Since virtual morphisms have no \emph{a priori} compatibility with $\alpha_X$, this prevents log K\"ahler differentials from being functorial with respect to arbitrary virtual morphisms; see \autoref{ex:non-differentiable} below.  However, functoriality can be restored by weakening the consistency condition to only apply to elements $f \in \cOx{X}$, leading to the following alternative notion.
  \begin{definition}
      The sheaf of \defn{virtual log K\"ahler differentials} is the sheaf $\wforms[1]{X/S}$ of $\cO{\uX}$-modules generated by $\forms[1]{\uX/\uS}$ and formal symbols $\wddlog{f}$ satisfying the relations \eqref{eq:dlog-leibniz}, \eqref{eq:horizontality}, and \eqref{eq:virtual-consistency}.
  \end{definition}

Note that since the differentials of invertible functions generate $\forms[1]{\uX/\uS}$, the sheaf $\wforms[1]{X/S}$ is generated by the elements $\wddlog{f}$ for $f \in \cMgp_X$. Consequently,  we have the following.
  \begin{lemma}
      If $\phi\colon X \to Y$ is a morphism in $\WLogSch_S$, then there is a unique $\phi^{-1}\cO{Y}$-linear map
      \[
      \phi^*\colon \phi^{-1}\rbrac{\wforms[1]{Y/S}} \to \wforms[1]{X/S}
      \]
      such that
      \[
\phi^*\rbrac{\wddlog{f}} = \wddlog{\phi^*f}
      \] for all $f \in \cMgp_X$.
  \end{lemma}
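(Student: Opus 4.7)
The plan is to define $\phi^*$ on the two types of generators of $\wforms[1]{Y/S}$ and then verify that the defining relations are preserved. On the sub-$\cO{Y}$-module $\forms[1]{\uY/\uS}$, I would use the classical pullback of K\"ahler differentials induced by the scheme morphism $\uphi\colon \uX \to \uY$, landing in $\forms[1]{\uX/\uS} \subset \wforms[1]{X/S}$. On the formal symbols, I would set
\[
\phi^*\rbrac{\wddlog{f}} := \wddlog{\phi^*f}
\]
for $f \in \uphi^{-1}\cMgp_Y$, using the group homomorphism $\phi^*\colon\uphi^{-1}\cMgp_Y \to \cMgp_X$ furnished by the virtual morphism structure. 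Since the prescription is fixed on a generating set, and the target is an $\cO{X}$-module on which $\phi^{-1}\cO{Y}$ acts through $\uphi^*$, both uniqueness and $\phi^{-1}\cO{Y}$-linearity follow automatically once existence is established.

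For existence, I would verify the three defining relations in turn. Leibniz \eqref{eq:dlog-leibniz} is immediate from $\phi^*$ being multiplicative combined with Leibniz in $\wforms[1]{X/S}$: $\wddlog{\phi^*(fg)} = \wddlog{\phi^*f \cdot \phi^*g} = \wddlog{\phi^*f} + \wddlog{\phi^*g}$. Horizontality \eqref{eq:horizontality} follows because $\phi$ is a morphism over $S$: writing $\sigma_X, \sigma_Y$ for the structure maps, $\phi^*\circ\sigma_Y^* = \sigma_X^*$ on $\cMgp_S$, so $\phi^*(\wddlog{\sigma_Y^*s}) = \wddlog{\sigma_X^*s} = 0$.

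The substantive check is the virtual consistency relation \eqref{eq:virtual-consistency}, which is where the axiom in \autoref{def:log-virtual-morph} is used essentially. For $f \in \cOx{Y}$, the commutativity of the square \eqref{eq:virtual-commutativity} defining a virtual morphism asserts precisely that the image of $f$ in $\cMgp_X$ under $\phi^*$ coincides with $\uphi^*f \in \cOx{X}$. Hence
\[
\wddlog{\phi^*f} = (\phi^*f)^{-1}\dd(\phi^*f) = (\uphi^*f)^{-1}\dd(\uphi^*f)
\]
inside $\wforms[1]{X/S}$ by virtual consistency in the target, and the rightmost expression is exactly the classical pullback of $f^{-1}\dd f$ along $\uphi$. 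This is the compatibility needed so that the two prescriptions of $\phi^*$ agree on the overlap of the generating sets, namely on $f \in \cOx{Y}$ where both $\dd f \in \forms[1]{\uY/\uS}$ and $\wddlog{f}$ are present.

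The main (and essentially only) obstacle is precisely this virtual consistency check; the other two relations are formal. This is also the conceptual point of the lemma: the weakening of \eqref{eq:consistency} to \eqref{eq:virtual-consistency} is calibrated exactly to what the virtual-morphism axiom controls, so $\wforms$ enjoys functoriality under arbitrary virtual morphisms, whereas the ordinary log K\"ahler differentials $\forms$ do not, since \eqref{eq:consistency} tests $\alpha_Y$ on non-invertible sections about which $\phi$ carries no information.
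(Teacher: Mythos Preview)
Your argument is correct and essentially matches the paper's reasoning. The paper's only proof is the sentence immediately preceding the lemma: since differentials of invertible functions already generate $\forms[1]{\uY/\uS}$ (locally, $\dd g = \dd(1+g)$ and one of $g,1+g$ is a unit), the elements $\wddlog{f}$ for $f\in\cMgp_Y$ generate all of $\wforms[1]{Y/S}$, so the formula $\phi^*\wddlog{f}=\wddlog{\phi^*f}$ pins down $\phi^*$ and one only needs to check it respects the relations --- which is exactly the verification you carry out. Your treatment of the classical-differential generators separately is a harmless redundancy; the paper's observation just lets one skip that step and makes uniqueness immediate.
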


There is a natural quotient map
\[
\wforms[1]{X/S} \to \forms[1]{X/S}
\]
sending $\wddlog{f} \mapsto \ddlog{f}$, so that we may make the following definition.
  \begin{definition}
   Let $\phi\colon X \to Y$ be a morphism in $\WLogSch_S$.  We say that $\phi$ is \defn{\diffable{}} if $\phi^*$ descends to a morphism
   \[
   \phi^*\colon \phi^{-1}\forms[1]{Y/S} \to \forms[1]{X/S}.
   \]
  \end{definition}

 Thus $\phi$ is \diffable{} if and only if the following condition holds in $\forms[1]{X/S}$:
 \begin{align}
 \uphi^*(\alpha_Y(f))\ddlog{\phi^*f} = \dd \uphi^*\alpha_Y(f) \qquad \textrm{for all }f \in \phi^{-1}\cM_Y \label{eq:differentiability}
 \end{align}
In particular, every ordinary morphism is \diffable{}. 
Note that both the left- and right-hand side of \eqref{eq:differentiability} behave like derivations: viewing either side of the equation as defining a map $v \colon \phi^{-1}\cM_Y \to \forms[1]{X/S}$, we have
\[
v(fg) = \uphi^*\alpha_Y(f)v(g) + \uphi^*\alpha_Y(g)v(f).
\]
Hence if \eqref{eq:differentiability} holds for $f,g \in \phi^{-1}\cM_Y$, it also holds for the product $fg$.  It therefore suffices to check \eqref{eq:differentiability} for a collection of elements $f$ that generate $\phi^{-1}\cM_Y$ over $\phi^{-1}\cOx{Y}$.

\begin{example}
    For $X \in \WLogSch_S$, let $\phi\colon S \to X$ be a section of the structure map $X \to S$.  Since $\Omega^1_{S/S} = 0$, every $\phi$ induces the zero map $\Omega^1_{X/S} \to 0$ and is therefore automatically differentiable. 
\end{example}

\begin{example}
    As a special case of the previous example, we see that the virtual morphisms corresponding to tangential basepoints  as in \autoref{sec:tangential-basepoints} are differentiable, giving the zero map on forms.
\end{example}

\begin{example}
    For $\KK$ a field, consider the standard log point $\logpt$ with phantom coordinate $t$, so that $\cM_Y = \KK^\times t^\NN$.  As explained in \autoref{ex:virtual-mor-to-log-pt}, a morphism $\phi \colon X \to \logpt$ in $\WLogSch_\KK$ is equivalent to the datum of the element $\phi^*(t) \in \cMgp(X)$.  We claim that such a morphism is automatically differentiable.  Indeed, since $\cM_Y$ is generated over $\KK^\times$ by $t$, it suffices to verify \eqref{eq:differentiability} for $f = t$.  But in this case, $\alpha_{\logpt}(f) = 0$ so that both the left- and right-hand sides of \eqref{eq:differentiability} are identically zero.  The same argument shows, more generally, that if $Y$ is a log scheme whose underlying scheme is a point $\uY \cong \Spec{\KK}$, then every virtual morphism $X \to Y$ in $\WLogSch_\KK$ is differentiable.
\end{example}

The following example shows that a virtual morphism need not be \diffable{} in general, even if the base $S$ is a field of characteristic zero.

\begin{example} \label{ex:non-differentiable}
Let $X := \Spec{t^\NN \to \KK[\epsilon]/(\epsilon^2) ; t\mapsto \varepsilon}$ be the log fat point from \autoref{ex:fat-log-point}.  The module $\forms[1]{X/\KK}$ is free of rank one, generated by $\ddlog{t}$.  Consider the virtual morphism $\phi : X \to X$ over $\KK$ defined by
\begin{align*}
\uphi^*\epsilon &= \epsilon & \phi^* t &= t^j
\end{align*}
for some $j \in \ZZ$.  We claim that $\phi$ is differentiable if and only if $j=1$.  Indeed, consider the equation \eqref{eq:differentiability} with $f=t$.  The right-hand side is given by
 \[
 \dd \uphi^* \alpha(t) = \dd \uphi^*\epsilon = \dd \epsilon =  \epsilon\, \ddlog{t}
 \]
 where the last equality follows from the consistency condition \eqref{eq:consistency}.  Meanwhile, the left-hand side of \eqref{eq:differentiability} reads
 \[
\uphi^*\alpha(t)\,\ddlog{\phi^*t} =  \uphi^*\epsilon\, \ddlog{t^j} = 
j \epsilon \,\ddlog{t}.
 \]
 Hence \eqref{eq:differentiability} holds if and only if $j=1$, as claimed.
\end{example}

In this example, the issue is caused by the presence of the nilpotent function $\epsilon = \alpha(t)$, which is nonzero, but also nowhere invertible, so that the action of a virtual morphism on $t$ is unconstrained.  This suggests that the lack of reducedness is playing a role in the failure of differentiability, and this is indeed the case, as is made precise by the following lemma. 

\begin{lemma}\label{lem:differentiability}
   Suppose that $X \in \WLogSch_S$ has the following properties:
   \begin{enumerate}
       \item The underlying scheme $\uX$ is reduced and locally Noetherian.
       \item The sheaf $\forms[1]{X/S}$ of logarithmic K\"ahler differentials is locally free.
   \end{enumerate}
     Then every  morphism $\phi \colon X \to Y$ in $\WLogSch_S$ is differentiable.
\end{lemma}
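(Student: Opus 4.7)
The plan is to verify the differentiability condition \eqref{eq:differentiability} stalk-by-stalk at the generic points of $\uX$, where the continuity principle (\autoref{lem:continuity-principle}) will pin down both sides simultaneously. First, I reduce the global equation to a pointwise check: since $\forms[1]{X/S}$ is locally free and $\uX$ is reduced and locally Noetherian, a local section of $\forms[1]{X/S}$ vanishes iff its image in each stalk $\forms[1]{X/S,\eta}$ at a generic point $\eta$ of $\uX$ is zero. This is because $\forms[1]{X/S}$ is locally a direct sum of copies of $\cO{X}$, and the natural map $\cO{X,x} \hookrightarrow \prod_\eta \cO{X,\eta}$ over the generic points of the irreducible components through $x$ is injective thanks to reducedness together with the finiteness of minimal primes in a Noetherian reduced local ring.

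Next, at a generic point $\eta$, the local ring $\cO{X,\eta}$ is a field. After shrinking to an open neighborhood $U \ni \eta$ meeting only one irreducible component of $\uX$, the scheme $U$ is integral, and the continuity principle applies to $\phi|_U$: for any germ $f \in \cM_{Y,\uphi(\eta)}$, either $\uphi^*\alpha_Y(f) = 0$ on $U$, or $f \in \cOx{Y,\uphi(\eta)}$ and $\alpha_X(\phi^*f) = \uphi^*\alpha_Y(f)$ on $U$. In the first case, both sides of \eqref{eq:differentiability} vanish near $\eta$: the left-hand side because its scalar factor is zero, and the right-hand side because it is the differential of a vanishing function. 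In the second case, the compatibility of virtual morphisms with invertibles (diagram \eqref{eq:virtual-commutativity}) forces $\phi^*f = \uphi^*f \in \cOx{X}$, and the virtual consistency \eqref{eq:virtual-consistency} yields $\ddlog{\phi^*f} = (\phi^*f)^{-1}\dd(\phi^*f)$; both sides of \eqref{eq:differentiability} then evaluate to $\dd(\phi^*f)$.

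The subtle step is the stalk-wise reduction: both local freeness of $\forms[1]{X/S}$ and reducedness of $\uX$ are essential, as the non-differentiable morphism of \autoref{ex:non-differentiable} illustrates by violating both hypotheses. Once this reduction is in place, the case split driven by the continuity principle finishes the argument cleanly, with the only book-keeping being that one must restrict to opens meeting a single irreducible component before invoking the integral version of \autoref{lem:continuity-principle}.
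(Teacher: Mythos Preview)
Your proof is correct and follows essentially the same route as the paper's: reduce to generic points using local freeness of $\forms[1]{X/S}$ together with reducedness and local Noetherianity, then apply the continuity principle (\autoref{lem:continuity-principle}) on an integral open to split into the two cases. One small inaccuracy in your commentary: \autoref{ex:non-differentiable} does \emph{not} violate both hypotheses, since the paper notes that $\forms[1]{X/\KK}$ there is free of rank one; only reducedness fails.
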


\begin{proof}
We must verify \eqref{eq:differentiability} for every section $f \in \uphi^{-1}\cM_Y$. Since the property of being reduced and locally Noetherian is stable under passing to  \'etale opens, and in particular to Zariski covers, we can assume without loss of generality that $f \in \sect{\uY,\cM_Y}$ is a global section and $\uX$ is Noetherian.
Then since $\forms[1]{X/S}$ is locally free and $\uX$ is reduced, it suffices to check \eqref{eq:differentiability} at the generic point of each irreducible component of  $\uX$.  In particular,  we may assume that $\uX$ is integral. 

Let $\eta \in \uX$ be the generic point. By the continuity principle (\autoref{lem:continuity-principle}) there are two possibilities.  The first possibility is that $\uphi^*\alpha_Y(f)=0$ is identically zero, in which case both sides of \eqref{eq:differentiability} are also zero, so in particular the equation holds.  The other possibility is that $f \in \cOx{Y,\uphi(\eta)}$, in which case \eqref{eq:differentiability} reduces to the defining relation \eqref{eq:virtual-consistency}.
\end{proof}

\begin{example}[Splittings on forms]
Let $\KK$ be a field.  In \cite[Construction 3.3]{Achinger2023} Achinger constructed, for every sufficiently nice log scheme $X/\KK$  equipped with a splitting of monoids $\epsilon \colon \cM_X/\cOx{X} \to \cM_X$, a map on forms $\epsilon^\circledast \colon \forms[1]{X/\KK} \to \forms[1]{\uX/\KK}$ (and a corresponding functor on flat connections and local systems) that ``is not induced
by a map of log schemes $\uX \to X$, though it behaves as if it was'', in that it splits the canonical map $\forms[1]{\uX/\KK}\to\forms[1]{X/\KK}$.   It does, however, come from a \emph{virtual} morphism.  Namely, under the hypotheses of \emph{op.~cit.}, the splitting $\epsilon$ corresponds via \autoref{ex:achinger-splittings} and \autoref{lem:differentiability} to a differentiable virtual morphism $\phi \colon \uX \to X$, giving a section of the canonical projection $X \to \uX$.  Then $\epsilon^\circledast = \phi^*$ is exactly the pullback on forms induced by $\phi$, and indeed the construction  in \emph{op.~cit.}  uses the reducedness (via a special case of the continuity principle) to check the consistency axiom, exactly as in the proof of \autoref{lem:differentiability}.
\end{example}

\subsection{de Rham cohomology}\label{sec:de-Rham}
Let $\KK$ be a field of characteristic zero.  To speak of the de Rham cohomology of a log scheme $X/\KK$, we require a certain smoothness assumption from \cite{KatoNakayama,OgusRHC}, which we now recall.  Note that this notion is more general than the notion of a ``log smooth'' variety:
\begin{definition}
    An fs log scheme $X \in \fsLogSch_\KK$ over $\KK$ is \defn{ideally smooth} if, locally in the \'etale topology, it is isomorphic to one of the form $\Spec{M \to \KK[M]/(I)}$ where $M$ is an fs monoid and $I < M$ is a monoid ideal.  
We denote by
\[
\smWLogSch_\KK \subset \fsWLogSch_\KK
\]
the full subcategory of ideally smooth log schemes.
\end{definition}

In geometric terms,  $X$ ideally smooth if it is locally isomorphic to the vanishing locus of a monomial ideal in a toric variety (possibly non-reduced).  These conditions imply that the sheaf of K\"ahler differentials $\forms[1]{X/\KK}$ is locally free: in a local toric model, a basis is given by the logarithmic differentials of a system of toric coordinates  on the ambient toric variety.

If $X$ is ideally smooth, its \defn{logarithmic de Rham complex} is the sheaf of differential graded $\KK$-algebras
\[
(\forms{X/\KK},\dd) := \left(\begin{tikzcd}
\cO{\uX} \ar[r] & \forms[1]{X/\KK} \ar[r] &\forms[2]{X/\KK} \ar[r]& \cdots 
\end{tikzcd}\right)
\]
where $\forms[p]{X/\KK} := \wedge^p\forms[1]{X/\KK}$ is the $p$th exterior power as an $\cO{\uX}$-module.  The differential is uniquely determined by the graded Leibniz rule, together with the requirement that $\dd \colon \cO{X} \to \forms[1]{X/\KK}$ is the composition of the usual differential $\cO{\uX}\to\forms[1]{\uX/\KK}$ with the natural map $\forms[1]{\uX/\KK}\to\forms[1]{X/\KK}$, and the identity
\[
\dd(\ddlog{f}) = 0
\]
holds for all $f \in \cM_X$.

\begin{definition}
    The \defn{de Rham cohomology} of an ideally smooth log scheme $X/\KK$ is the hypercohomology
    \[
    \HdR{X} := \mathbb{H}^\bullet(\forms{X/\KK},\dd).
    \]
\end{definition}

The logarithmic de Rham complex is functorial with respect to ordinary morphisms $\phi\colon X \to Y$ of ideally smooth log schemes over $\KK$; if, in addition, $\uX$ is reduced, this complex is functorial with respect to all virtual morphisms thanks to \autoref{lem:differentiability}.  To treat the non-reduced case, we shall use the following ``nil-invariance'' property.
\begin{lemma}\label{lem:nil-invariance}
    If $X$ is ideally smooth, then the inclusion $X_\red \to X$ induces a quasi-isomorphism $(\forms{X/\KK},\dd) \to (\forms{X_\red/\KK},\dd)$ of sheaves of differential graded algebras.
\end{lemma}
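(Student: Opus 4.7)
The plan is to reduce to an explicit toric model and compute both sides by hand. Since the statement is about a quasi-isomorphism of complexes of coherent sheaves, it is Zariski-local on $\uX$, so I would first use the definition of ideal smoothness to assume, after passing to an \'etale cover, that $X = \Spec{M \to \KK[M]/I}$ for an fs monoid $M$ and a proper monoid ideal $I \subset M$. After shrinking further, we may take this chart sharp at a point so that $M^\gp$ is torsion-free. The reduction $X_\red$ then has the form $\Spec{M \to \KK[M]/\sqrt{I}}$, where $\sqrt{I} := \{m \in M : m^k \in I \text{ for some } k \ge 1\}$ is the monoid-theoretic radical; this uses the classical fact that the nilradical of a monomial ideal in $\KK[M]$ is itself monomial and agrees with the ideal generated by $\sqrt{I}$.

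In such a chart, the sheaf of log K\"ahler differentials is free over $\cO{\uX}$, with a basis of $\dlog$-forms of the toric coordinates:
\[
\forms[p]{X/\KK} \cong (\KK[M]/I) \otimes_\ZZ \wedge^p M^\gp,
\]
and similarly for $X_\red$. The differential is determined by $\dd t^m = t^m \ddlog{m}$ and by the fact that each $\ddlog{m}$ is closed, so it sends $t^m \otimes \omega$ to $t^m \otimes (m \wedge \omega)$. Using the $M$-grading $\KK[M]/I = \bigoplus_{m \in M \setminus I} \KK \cdot t^m$, the entire log de Rham complex decomposes as a direct sum, indexed by $m \in M \setminus I$, of Koszul-type complexes $(\wedge^\bullet (M^\gp \otimes_\ZZ \KK), \wedge m)$.

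Now I would invoke the key homological input: in characteristic zero, with $M^\gp$ torsion-free, the Koszul complex $(\wedge^\bullet (M^\gp \otimes_\ZZ \KK), \wedge m)$ is acyclic whenever $m \neq 0$ in $M^\gp$, and coincides with $\wedge^\bullet (M^\gp \otimes_\ZZ \KK)$ with zero differential when $m = 0$. Since $M$ is sharp and integral, the only $m \in M$ with $m = 0$ in $M^\gp$ is the identity $1_M$, which belongs to both $M \setminus I$ and $M \setminus \sqrt{I}$ as $I$ is proper. Hence $\forms{X/\KK}$ and $\forms{X_\red/\KK}$ both have cohomology concentrated in the $m = 1_M$ graded piece, isomorphic to $\wedge^\bullet(M^\gp \otimes_\ZZ \KK)$; the reduction map acts as the identity on this piece and therefore induces an isomorphism on cohomology. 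Since quasi-isomorphism of complexes of coherent sheaves on an affine scheme is detected on global sections, this completes the local check.

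The main obstacle I expect is not the Koszul computation itself, which is standard, but rather correctly setting up the local toric model: verifying that an ideally smooth $X$ may be described compatibly with its reduction by a single chart $(M, \sqrt{I})$, and that the explicit expression $(\KK[M]/I) \otimes_\ZZ \wedge^\bullet M^\gp$ for the de Rham complex survives passage to $X_\red$ identically apart from the replacement of $I$ by $\sqrt{I}$. Once these toric-combinatorial identifications are in place, the $M$-graded decomposition and the characteristic-zero Koszul acyclicity yield the result immediately.
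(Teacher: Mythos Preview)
Your approach is correct and takes a genuinely different route from the paper. The paper argues indirectly and transcendentally: reduce to $\KK=\CC$ via the Lefschetz principle, then invoke Kato--Nakayama's comparison theorem together with the observation that $\KN{X}=\KN{X_\red}$ as topological spaces. Yours is the natural direct algebraic argument---pass to a toric chart $\Spec{M\to\KK[M]/I}$ and exploit the $M$-graded Koszul decomposition of the log de Rham complex. The paper's route is shorter once the Kato--Nakayama machinery is taken as a black box; yours avoids any analytic input or Lefschetz-type reduction and makes the mechanism completely transparent. It is essentially the ``direct proof'' the paper itself alludes to in the remark following the lemma. (Incidentally, the torsion-freeness of $M^\gp$ is not strictly needed: even if $M^\gp$ has torsion, every torsion element lies in $M^\times$, hence outside any proper monoid ideal, so those summands appear identically on both sides and the comparison still goes through.)

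One step deserves more care than you give it. The assertion that ``quasi-isomorphism of complexes of coherent sheaves on an affine scheme is detected on global sections'' does not apply as stated: the de Rham differential is only $\KK$-linear, not $\cO{\uX}$-linear, so the cohomology \emph{sheaves} of $(\forms{X/\KK},\dd)$ are not quasi-coherent, and acyclicity on global sections does not automatically propagate to stalks. (Already for $\uX=\AF^1$ with trivial log structure, the algebraic de Rham complex has nonvanishing Zariski-local $\mathcal H^1$, even though $\HdR[1]{\AF^1}=0$.) Your Koszul computation certainly yields the hypercohomology statement $\HdR{X}\cong\HdR{X_\red}$ on each affine chart---which is all the paper actually uses downstream---but the sheaf-level quasi-isomorphism asserted in the lemma needs a further argument, since the $M$-grading does not survive Zariski localization at a non-monomial element. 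This is the one place where the paper's transcendental route carries genuine content beyond packaging: Kato--Nakayama's comparison is itself a sheaf-level Poincar\'e lemma on $\KN{X}$, and pulling it back along the surjection $\tau$ gives the stalk-wise statement for free.
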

\begin{proof}
    The statement is compatible with field extensions, so by the Lefschetz principle, it suffices to prove it for $\KK=\CC$, in which case it follows from Kato--Nakayama's comparison isomorphism $\HdR{X}\cong \HB{X}\otimes \CC$ (see \cite[Theorem (0.2)(2)]{KatoNakayama}) and the fact that the topological spaces $\KN{X} = \KN{X_\red}$ are the same.
\end{proof}

Using the lemma we may express the functoriality of de Rham cohomology for virtual morphisms (even at cochain level), as follows. 
Let $\rsect{-}$ denote a symmetric monoidal functor of derived global sections of complexes of sheaves, e.g.~the Thom--Whitney normalization of the Godement resolution as in \cite[\S1-5]{NavarroAznar1987} will work.  We denote by 
\[
\rsect{X,\forms{X}} = \rsect{X,(\forms{X/\KK},\dd)}
\]
the resulting global de Rham complex, so that we have a canonical isomorphism of graded $\KK$-algebras
\[
\HdR{X} \cong \coH{\rsect{X,\forms{X}}}
\]
If $\phi : X \to Y$ is an arbitrary  virtual morphism over $\KK$, we have a zig-zag of morphisms of dg $\KK$-algebras
\[
\begin{tikzcd}[column sep=3.5em]
    \rsect{Y,\forms{Y}}\ar[r,"\sim"] &  \rsect{Y_{\red},\forms{Y_\red}} \ar[r,"\rsect{\phi_\red^*}"] & \rsect{X_{\red},\forms{X_\red}} & \ar[l,"\sim"'] \rsect{X,\forms{X},\dd}
\end{tikzcd}
\]
where the arrows labelled $\sim$ are quasi-isomorphisms by \autoref{lem:nil-invariance}.  Following the arrows from left to right, we obtain a canonical morphism
\[
\phi^* : \rsect{Y,\forms{Y}} \to \rsect{X,\forms{X}}
\]
in the homotopy category of dg $\KK$-algebras.    Moreover, we have a canonical isomorphism of sheaves $\forms{X}\boxtimes \forms{Y} \cong \forms{X\times Y}$, which gives a quasi-isomorphism
\[
\rsect{X,\forms{X}} \otimes_\KK \rsect{Y,\forms{Y}} \to \rsect{X \times Y,\forms{X \times Y}},
\]
Thus the global de Rham complex  is a lax symmetric monoidal functor up to homotopy on $\smWLogSch_\CC$, which becomes a strong symmetric monoidal functor after passing to cohomology.

\begin{corollary}
  Algebraic de Rham cohomology $\HdR{-}$ is a strong symmetric monoidal functor from $\smWLogSch_\KK$ to the category of graded $\KK$-vector spaces.
\end{corollary}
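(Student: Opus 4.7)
The plan is to deduce the corollary directly from the chain-level constructions of the preceding subsection. For functoriality of $\HdR{-}$, recall that to each virtual morphism $\phi\colon X\to Y$ in $\smWLogSch_\KK$, the zig-zag
\[
\rsect{Y,\forms{Y}} \xleftarrow{\sim} \rsect{Y_\red,\forms{Y_\red}} \xrightarrow{\rsect{\phi_\red^*}} \rsect{X_\red,\forms{X_\red}} \xleftarrow{\sim} \rsect{X,\forms{X}}
\]
has already been defined, whose outer arrows are quasi-isomorphisms by \autoref{lem:nil-invariance}. Inverting these in the homotopy category of dg $\KK$-algebras and passing to cohomology yields the canonical map of graded $\KK$-algebras $\phi^*\colon \HdR{Y}\to \HdR{X}$, which is manifestly functorial in $\phi$ and compatible with the identity.

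For the symmetric monoidal structure, the external product identification $\forms{X}\boxtimes\forms{Y}\cong \forms{X\times_\KK Y}$ combined with the lax symmetric monoidality of $\rsect{-}$ provides a natural K\"unneth chain map
\[
\kappa_{X,Y}\colon \rsect{X,\forms{X}}\otimes_\KK \rsect{Y,\forms{Y}} \longrightarrow \rsect{X\times_\KK Y,\forms{X\times_\KK Y}}.
\]
The associator, symmetry, and unit coherences for $\kappa$ are inherited formally from those of $\rsect{-}$ and $\otimes_\KK$. Hence the corollary reduces to showing that $\kappa_{X,Y}$ induces an isomorphism on cohomology, i.e., to establishing the K\"unneth isomorphism $\HdR{X}\otimes_\KK \HdR{Y}\cong \HdR{X\times_\KK Y}$ for ideally smooth log schemes over $\KK$.

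The main obstacle is this K\"unneth isomorphism. My strategy is to reduce to $\KK=\CC$: the formation of the log de Rham complex of an ideally smooth log scheme is compatible with flat base change of the base field, so by the Lefschetz principle it suffices to verify the K\"unneth quasi-isomorphism after extending scalars to $\CC$. There, the Kato--Nakayama comparison $\HdR{-}\cong \HB{-}\otimes\CC$ from \autoref{thm:BdR} identifies the map induced by $\kappa_{X,Y}$ on cohomology with the topological K\"unneth map for $\KN{X\times_\CC Y}\cong \KN{X}\times\KN{Y}$ (the latter identification being the product-preservation from \autoref{sec:Betti}). Since the Kato--Nakayama spaces of ideally smooth fs log schemes of finite type have the homotopy type of finite CW complexes, their Betti cohomologies are finitely generated and the classical topological K\"unneth formula (over the field $\CC$) applies, concluding the argument.
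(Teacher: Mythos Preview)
Your approach is essentially the same as the paper's: the corollary is stated there as a direct consequence of the preceding zig-zag construction and the K\"unneth map, without a separate proof. The one difference is that the paper simply asserts the K\"unneth map is a quasi-isomorphism, whereas you supply an argument via the Lefschetz principle and the Kato--Nakayama comparison (exactly the device the paper uses to prove \autoref{lem:nil-invariance}). That extra justification is fine and in the spirit of the paper.

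One small slip: the first arrow of your zig-zag points the wrong way. The inclusion $Y_\red \hookrightarrow Y$ induces the restriction of forms $\rsect{Y,\forms{Y}} \to \rsect{Y_\red,\forms{Y_\red}}$, not the reverse. So the zig-zag should read
\[
\rsect{Y,\forms{Y}} \xrightarrow{\ \sim\ } \rsect{Y_\red,\forms{Y_\red}} \xrightarrow{\ \rsect{\phi_\red^*}\ } \rsect{X_\red,\forms{X_\red}} \xleftarrow{\ \sim\ } \rsect{X,\forms{X}},
\]
as in the paper. This does not affect the argument, since you invert these quasi-isomorphisms anyway, but it should be corrected.
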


\section{Logarithmic functions and the comparison isomorphism}
\label{sec:comparison}
 
 In \cite{KatoNakayama}, Kato--Nakayama construct a canonical \defn{Betti--de Rham comparison isomorphism}
\begin{equation}\label{eq:Betti-de-Rham-comparison}
\HdR{-} \cong \HB{-}\otimes_\ZZ \CC,
\end{equation}
which is natural with respect to ordinary morphisms of log schemes.  The key step in the proof is a version of the Poincar\'e lemma for a certain sheaf of ``forms with logarithmic coefficients'' on the Kato--Nakayama space.  Our aim now is to prove that this comparison is natural with respect to arbitrary virtual morphisms (and natural up to homotopy at cochain level).  Note that since Betti and de Rham cohomology are invariant under nilpotent thickenings by \autoref{lem:nil-invariance}, it suffices to prove the result for reduced log schemes. 

\begin{remark}
A comment is in order here, since on the one hand, we are using \autoref{lem:nil-invariance} to reduce the problem to reduced schemes, but the proof of that lemma  appeals to the Kato--Nakayama comparison isomorphism.  However, said proof only uses naturality for the reduction $X_\red \to X$, which is an ordinary morphism; hence there is no circular reasoning. It should also be possible to extract a direct proof of the nil-invariance for any $\KK$ from their arguments.
\end{remark}

\begin{remark}
    While we treat only the case $\KK=\CC$, it immediately implies a corresponding result for  any subfield $\KK \subset \CC$ by base change.  Namely, in this setting, the de Rham cohomology is a $\KK$-vector space, and we define the Betti cohomology by base change to $\CC$.  We then have a natural isomorphism
    \[
    \HdR{-}\otimes_\KK \CC \cong \HB{-}\otimes_\ZZ\CC
    \]
    of monoidal functors on $\smWLogSch_\KK$.
\end{remark}

\subsection{Logarithmic functions}

Let $X$ be an ideally smooth log scheme over $\CC$.  We denote by $\an{X}$ the associated log analytic space, given by analytification of the underlying scheme $\uX$ and pullback of the log structure under the natural map $\an{\uX} \to \uX$ of ringed spaces.

In \cite{KatoNakayama}, Kato--Nakayama introduce a sheaf $\cOlog{X}$ of ``formal logarithmic functions'' on  the space $\KN{X}$, as follows.

\begin{definition}
A \defn{formal logarithm} on $\KN{X}$ is a pair $(f,\theta)$ consisting of a section $f \in \tau^{-1}\cMgp_{\an{X}}$ and a continuous function $\theta \in \Cont{\KN{X}}$ such that $\arg(f) = \exp(\iu \theta)$ as $\unitcirc$-valued functions.  We denote this pair by
\[
\log_\theta(f) := (f,\theta).
\]
We denote the sheaf of formal logarithms by
\[
\cL_X \cong \Cont{\KN{X}} \fibprod_{\Contcirc{\KN{X}}} \tau^{-1}\cMgp_{\an{X}}
\]
where $\Contcirc{\KN{X}}$ is the sheaf of $\unitcirc$-valued continuous functions.
\end{definition}
A formal logarithm $\log_\theta(f)$ can be thought of as a choice of ``branch of logarithm'' for the section $f \in \cM_{\an{X}}$, and the projection $\cL_X\to \tau^{-1}\cMgp_{\an{X}}$ as the exponential function. 
There is a canonical map
\[
\begin{tikzcd}
\tau^{-1}\cO{\an{X}} \ar[r]&{\cL_X}
\end{tikzcd}
\]
sending a function $g \in \cO{\an{X}}$ to the section $\log_\theta(f)$ where $f = e^g$ is the exponential, and $\theta = \Im(g) \in \Cont{\KN{X}}$ is the imaginary part.  
\begin{definition}
    We denote by $\cOlog{\an{X}}$ the sheaf of $\tau^{-1}\cO{\an{X}}$ algebras generated by the formal logarithms $\log_\theta(f) \in \cL_X$ modulo the relation
    \begin{align}
    g = \log_{\Im{g}}(e^g) \label{eq:logarithm-consistency}
    \end{align}
    for all $g \in \tau^{-1}\cO{\an{X}}$
\end{definition}

The sheaf of formal logarithms is functorial with respect to virtual morphisms: 
\begin{lemma}\label{lem:functoriality-of-log-functions}
For a morphism $X \to Y$ in $\smWLogSch_\CC$, there is a unique map 
\[
\phi^*\colon \phi^{-1}\cOlog{\an{Y}} \to \cOlog{\an{X}}
\]
of $\tau^{-1}\cOlog{\an{Y}}$-algebras such that 
\begin{align}
\phi^*\log_\theta(f) = \log_{\KN{\phi^*}\theta}(\phi^*f) \label{eq:log-pullback}
\end{align}
for every formal logarithm $\log_\theta(f) \in \cL_Y$.
\end{lemma}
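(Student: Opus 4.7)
The plan is to build $\phi^*$ in two stages, constructing first a map on formal logarithms and then extending to all of $\cOlog{}$ using the universal property built into its definition, with uniqueness coming from the fact that $\cOlog{\an{Y}}$ is generated by $\tau_Y^{-1}\cO{\an{Y}}$ and $\cL_Y$.

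First I would construct a map $\phi^{-1}\cL_Y \to \cL_X$ by the formula in the statement. To see that this is well-defined, given $\log_\theta(f)$ with $\arg(f) = \exp(\iu\theta)$, I need $\arg(\phi^* f) = \exp(\iu\, \KN{\phi}^*\theta)$. The identity $\KN{\phi}^*\arg(f) = \arg(\phi^* f)$ is built into the definition of the functor $\KN{-}$ on virtual morphisms, and pullback of continuous functions commutes with composition, giving $\KN{\phi}^*\exp(\iu\theta) = \exp(\iu\, \KN{\phi}^*\theta)$, so this map takes values in $\cL_X$ as required. It is clearly compatible with the projection $\cL \to \tau^{-1}\cMgp$ and with $(f,\theta) \mapsto \theta$.

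Next I would combine this with the ordinary analytic pullback $\phi^{-1}\tau_Y^{-1}\cO{\an{Y}} \to \tau_X^{-1}\cO{\an{X}}$ induced by $\uphi^*$, giving a map from the free graded-commutative $\tau_X^{-1}\cO{\an{X}}$-algebra on $\phi^{-1}\cL_Y$ (with $\cO$-module structure obtained by restriction along $\uphi^*$) to $\cOlog{\an{X}}$. To factor through $\phi^{-1}\cOlog{\an{Y}}$, the only thing to verify is compatibility with relation \eqref{eq:logarithm-consistency}. Given $g \in \phi^{-1}\tau_Y^{-1}\cO{\an{Y}}$, the relation $g = \log_{\Im g}(e^g)$ in $\cOlog{\an{Y}}$ gets sent, on the one hand, to $\uphi^* g$, and on the other hand to $\log_{\KN{\phi}^*\Im g}(\phi^* e^g)$. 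The key identity is $\phi^* e^g = e^{\uphi^* g}$: this holds because $e^g \in \cOx{\an{Y}}$ is invertible, and the commutativity of \eqref{eq:virtual-commutativity} forces virtual pullback to agree with ordinary scheme-theoretic pullback on invertible functions. Combined with $\KN{\phi}^*\Im g = \Im \uphi^* g$, both sides are identified with $\uphi^* g$ via relation \eqref{eq:logarithm-consistency} applied in $\cOlog{\an{X}}$, so the map descends as desired.

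Uniqueness is then immediate: since $\cOlog{\an{Y}}$ is generated as a sheaf of rings by $\tau_Y^{-1}\cO{\an{Y}}$ together with the formal logarithms in $\cL_Y$, and we have prescribed $\phi^*$ on both (on the former by demanding it be an algebra map over $\phi^{-1}\cO{\an{Y}}$ via $\uphi^*$, on the latter by the explicit formula \eqref{eq:log-pullback}), the extension is forced. The only point that really uses the virtual-morphism axioms rather than purely topological data is the identity $\phi^* e^g = e^{\uphi^* g}$, which is where the compatibility between $\cOx{}\to\cMgp$ and $\uphi^*$ plays the decisive role; this is the only potentially subtle step, and it amounts to the observation that restricting a virtual morphism to invertible functions recovers the ordinary pullback.
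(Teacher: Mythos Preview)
Your proof is correct and follows essentially the same approach as the paper's: both check that the formula \eqref{eq:log-pullback} lands in $\cL_X$ via the identity $\KN{\phi}^*\arg(f) = \arg(\phi^*f)$, and then verify compatibility with the relation \eqref{eq:logarithm-consistency} by observing that it only involves elements $e^g \in \cOx{\an{Y}}$, on which virtual pullback agrees with ordinary pullback. You have simply spelled out the details more carefully, including the uniqueness argument.
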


\begin{proof}
That the right-hand-side of \eqref{eq:log-pullback} defines a formal logarithm on $\KN{X}$ follows immediately from the definition of the induced map of Kato--Nakayama spaces, which as noted above, implies that $\arg \phi^* f = \KN{\phi}^*\arg(f)$ for all $f \in\cM_Y$.  This gives the map on generators.  It remains simply to observe that the relation \eqref{eq:logarithm-consistency} only involves the elements of $\cM_{\an{Y}}$ of the form $\exp(g) \in \cOx{\an{Y}}$, and the latter are preserved by the definition of virtual morphisms.
\end{proof}

\subsection{Naturality of the comparison map}

We now prove the naturality of the Betti--de Rham comparison map with respect to arbitrary virtual morphisms.  

For this, we recall that in \cite{KatoNakayama}, Kato--Nakayama introduce the complex
\[
\forms[\bullet,\log]{X} := \cOlog{\an{X}}\otimes_{\tau^{-1}\cO{X}} \tau^{-1}\forms{X/\CC},
\]
of sheaves of ``holomorphic forms with logarithmic coefficients'' on $\KN{X}$, 
which they denote by $\omega^{\bullet,log}_X$.  It carries a natural de Rham differential $\dd$, and they prove that the natural maps
\begin{equation}
\begin{tikzcd}
    \tau^{-1}(\forms{X/\CC},\dd)\ar[r] & (\forms[\bullet,\log]{X},\dd) &  \CC_{\KN{X}} \ar[l]
\end{tikzcd}\label{eq:BdR-morphisms}
\end{equation}
are quasi-isomorphisms of complexes of sheaves, where the right hand side is the constant sheaf, viewed as a complex concentrated in degree zero.
Passing to derived global sections using a symmetric monoidal resolution functor for $\rsect{-}$ as in \autoref{sec:de-Rham}, we obtain a quasi-isomorphism
\[
\rsect{X,\forms{X}} \cong \rsect{\KN{X};\CC},
\] 
which induces the Betti--de Rham comparison isomorphism
\[
\HdR{X} \cong \HB{X}\otimes_\ZZ\CC.
\]
From the functoriality of $\cOlog{\an{X}}$ (\autoref{lem:functoriality-of-log-functions}), the nil-invariance (\autoref{lem:nil-invariance}), and the functoriality of $\forms{X}$ for reduced ideally smooth $X$ (\autoref{lem:differentiability}), it is immediate that the maps \eqref{eq:BdR-morphisms} are natural with respect to virtual morphisms and compatible with products, so that we have a weak equivalence
\[
\rsect{-,\forms{}} \cong \rsect{\KN{-};\CC}
\]
of lax symmetric monoidal functors up to homotopy on $\smWLogSch_\CC$.

\begin{corollary}
    Kato--Nakayama's comparison isomorphism gives a natural equivalence
    \[
    \HdR{-} \cong \HB{-}\otimes_\ZZ\CC
    \]
    of strong symmetric monoidal functors on $\smWLogSch_\CC$.
\end{corollary}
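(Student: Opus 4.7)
The plan is to deduce the corollary by passing to cohomology from the weak equivalence of lax symmetric monoidal functors $\rsect{-,\forms{-}} \simeq \rsect{\KN{-};\CC}$ established in the preceding paragraph. Applying $\coH{-}$ yields, for each $X \in \smWLogSch_\CC$, a natural isomorphism $\HdR{X} \cong \HB{X}\otimes_\ZZ\CC$ of graded $\CC$-vector spaces, whose naturality with respect to virtual morphisms is inherited directly from the cochain-level zig-zag. The induced transformation is automatically lax monoidal, so the entire content of the corollary is to upgrade ``lax'' to ``strong'' after passing to cohomology.

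Next, I would verify the relevant Künneth statements on both sides. On the de Rham side, the lax structure is induced by the canonical sheaf-level identification $\forms{X}\boxtimes\forms{Y} \cong \forms{X\times_\CC Y}$ noted in \autoref{sec:de-Rham}; taking derived global sections, the resulting map $\rsect{X,\forms{X}}\otimes_\CC\rsect{Y,\forms{Y}} \to \rsect{X\times_\CC Y,\forms{X\times_\CC Y}}$ is a quasi-isomorphism by the Künneth formula for hypercohomology over the field $\CC$. Here ideal smoothness is preserved under products over $\Spec{\CC}$, since a product of local toric-monomial models is again of the same form, so this takes place inside $\smWLogSch_\CC$. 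On the Betti side, the analogous lax map is the singular cohomology cross product for $\KN{X\times_\CC Y} \cong \KN{X}\times\KN{Y}$, which is a quasi-isomorphism after tensoring with $\CC$ by the topological Künneth theorem with field coefficients.

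Finally, one must check that these two Künneth maps are intertwined by the Kato--Nakayama comparison. This is essentially built into the construction: the intermediate complex $\forms[\bullet,\log]{X}$ of forms with logarithmic coefficients carries a natural graded-commutative multiplicative structure, and the two quasi-isomorphisms $\tau^{-1}\forms{X/\CC} \to \forms[\bullet,\log]{X} \leftarrow \CC_{\KN{X}}$ in \eqref{eq:BdR-morphisms} respect exterior products. The main (modest) obstacle is to transport this monoidal compatibility across the symmetric monoidal resolution $\rsect{-}$ used to pass from sheaves to cochains; this is exactly what the Thom--Whitney normalization of Navarro-Aznar is designed to do, and it makes the required diagrams commute in the homotopy category of dg $\CC$-algebras. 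Combining these three ingredients gives the desired natural equivalence of strong symmetric monoidal functors on $\smWLogSch_\CC$.
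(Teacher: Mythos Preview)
Your proposal is correct and follows essentially the same approach as the paper: the corollary is deduced directly from the cochain-level weak equivalence $\rsect{-,\forms{-}} \simeq \rsect{\KN{-};\CC}$ of lax symmetric monoidal functors established immediately before it, by passing to cohomology. The paper does not spell out the K\"unneth verification you give (it relies on the fact, recorded earlier, that $\HdR{-}$ and $\HB{-}\otimes\CC$ are each already strong monoidal), but your added detail is compatible with and fills in exactly this point.
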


\section{Regularized pullbacks to strata}\label{sec:specialization}

Let us now explain how the cohomological formalism plays out in the concrete example of a normal crossing divisor.  The basic idea is that pulling back along a virtual morphism gives rise to a natural ``regularized pullback in cohomology'' that let us restrict classes in the complement of a normal crossing divisor to classes on the strata of the divisor, compatible with the comparison between Betti and de Rham cohomology.  The possibility of constructing such pullbacks is one of the motivations for Deligne's introduction of tangential basepoints.

We adopt the notation of \autoref{sec:tangential-basepoints}, in which  $X$ is the smooth log scheme over $\KK$ associated to a normal crossing divisor $(\uX,\uD)$.  We now further assume that $\KK$ is a subfield of $\CC$. Let $\coH{-} = \HB{-}$ or $\HdR{-}$, and recall that the inclusion $\uX\setminus \uD \hookrightarrow X$ induces a quasi-isomorphism on Betti and de Rham cochains, giving an isomorphism
\[
\coH{X}\cong \coH{\uX\setminus \uD},
\]
compatible with the comparison $\HB{-}\otimes\CC \cong \HdR{-}\otimes \CC$.

Now suppose that  $i \colon \uY \to \uX$ is a locally closed immersion given locally by an intersection of divisor components as in  \autoref{sec:strata}.  Then the immersion of log schemes $\widehat{X}^\circ_Y \to N^\circ_YX$ induces a homotopy equivalence of Kato--Nakayama spaces, with inverse given by the projection of the punctured normal bundle onto its corresponding circle bundle.  The diagram \eqref{eq:punctured-neighbourhood} of ordinary morphisms thus induces the following commutative diagram in cohomology, in which many of the maps are isomorphisms:
\begin{equation*}
\begin{tikzcd}[column sep=1.5em]
    \coH{N_{\uY}\uX \setminus N_{\uY}\uD}  & \coH{N^\circ_Y X}  \ar[l,"\sim"'] \ar[r,"\sim"] & \coH{\widehat X^\circ_Y} & \coH{X} \ar[l] \ar[r,"\sim"] & \coH{\uX \setminus \uD} \\
    \coH{\uY \setminus \uD_{Y}} \ar[u] & \coH{Y}\ar[l,"\sim"']\ar[u] &&& \coH{\uX}\ar[lll]\ar[u]
\end{tikzcd}\label{eq:punctured-neighbourhood-cohomology}
\end{equation*}
Tracing through the diagram, we see that there is a canonical induced map
\[
\sigma \colon \coH{X} \to \coH{N^\circ_Y X}
\]
that is canonically identified with the classical \defn{specialization map}
\[
\underline{\sigma} \colon \coH{\uX\setminus\uD} \to \coH{N_{\uY}\uX \setminus N_{\uY}\uD}.
\]
On Betti cohomology, the latter is induced by restriction to any choice of punctured $C^\infty$ tubular neighbourhood of $Y$.  Note that Levine \cite{LevineTubular} constructed a motivic lift of the specialization map, which is thus defined for any Weil cohomology theory.

Now suppose that $\phi \colon Y \to X$ is a virtual morphism lifting the immersion $i \colon \uY \to \uX$, and let $s \colon \uY \setminus \uD_Y \to N_{\uY}\uX \setminus N_{\uY}\uD$ be the corresponding inward pointing section of the normal bundle as in \autoref{prop:virtual-inclusions-of-strata}.  Since $\phi$ factors through the pullback log structure, we deduce from the above that its action on cohomology is as follows.
\begin{proposition}
    The pullback map
    \[
    \phi^* \colon \coH{X} \to \coH{Y}
    \]
    is naturally identified with the composition
    \[
    s^* \circ \underline{\sigma} \colon \coH{\uX\setminus \uD} \to \coH{\uY\setminus \uD_Y}.
    \]
\end{proposition}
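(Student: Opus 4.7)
The plan is to factor $\phi$ through the pullback log structure $\widehat{X}^\circ_Y$ using \autoref{lem:pullback}, writing $\phi = \iota \circ \tilde{\phi}$, where $\iota \colon \widehat{X}^\circ_Y \to X$ is the canonical ordinary immersion and $\tilde{\phi} \colon Y \to \widehat{X}^\circ_Y$ is a virtual morphism over $\uY$. Functoriality of cohomology for virtual morphisms then gives $\phi^* = \tilde{\phi}^* \circ \iota^*$. Writing $\iota' \colon \widehat{X}^\circ_Y \to N^\circ_Y X$ for the ordinary zero-section inclusion and $j' \colon N_{\uY}\uX \setminus N_{\uY}\uD \to N^\circ_Y X$ for the open immersion---both of which are isomorphisms on cohomology as noted in the diagram preceding the proposition---the construction of the specialization map unwinds to $\underline{\sigma} = (j')^* \circ ((\iota')^*)^{-1} \circ \iota^*$. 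Setting $\tilde{\phi}' := \iota' \circ \tilde{\phi}$, the claim thereby reduces to establishing the equality
\[
j^* \circ (\tilde{\phi}')^* = (j')^* \circ s^*
\]
as maps $\coH{N^\circ_Y X} \to \coH{\uY \setminus \uD_Y}$, where $j \colon \uY\setminus\uD_Y\hookrightarrow Y$ is the open immersion (itself an isomorphism on cohomology).

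Next I would use \autoref{ex:pullback-log-structure-to-divisor} to identify $\widehat{X}^\circ_Y$ with the Deligne--Faltings log scheme associated to $(\uY, \uD_Y, N_{\uY}\uX)$, and \autoref{prop:morphisms-as-monomial-maps} together with \autoref{prop:virtual-inclusions-of-strata} to identify $\tilde{\phi}$ with the virtual monomial map determined by the inward-pointing section $s$. The two virtual morphisms $\tilde{\phi}' \circ j$ and $j' \circ s$ from $\uY \setminus \uD_Y$ into $N^\circ_Y X$ do not agree on the underlying schemes---the former lands in the zero section of $N_{\uY}\uX$, while the latter lands in the punctured normal bundle---but I expect them to become homotopic after applying the Kato--Nakayama functor. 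The homotopy should be the radial scaling $(\lambda, y) \mapsto \lambda \cdot s(y)$ for $\lambda \in (0, 1]$: at $\lambda = 1$ we obtain $\KN{j' \circ s}(y) = s(y)$, interpreted as a point in the interior $(N_{\uY}\uX \setminus N_{\uY}\uD)(\CC) \hookrightarrow \KN{N^\circ_Y X}$, and as $\lambda \to 0$ one obtains the limiting boundary KN-point whose angular coordinates record $\arg s(y) \in (\unitcirc)^j$, which should coincide with $\KN{\tilde{\phi}'}(j(y))$ by the definition of the Kato--Nakayama space of a Deligne--Faltings log scheme (cf.~\autoref{ex:DF-KN}).

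Given the homotopy, Betti functoriality yields the desired equality of cohomological maps, and the naturality of the Kato--Nakayama comparison isomorphism for virtual morphisms (\autoref{thm:BdR}) transfers it to de Rham cohomology over $\KK = \CC$; for a general subfield $\KK \subset \CC$ the statement then follows by base change. Combining with $\phi^* = \tilde{\phi}^* \circ \iota^*$ and the factorisation of $\underline{\sigma}$ above, then identifying $\coH{Y} \cong \coH{\uY \setminus \uD_Y}$ along $j^*$ and $\coH{X} \cong \coH{\uX \setminus \uD}$ along the analogous open immersion, completes the argument.

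The main obstacle I anticipate is extending the radial homotopy continuously from the open locus $\uY \setminus \uD_Y$ to all of $\KN{Y}$, including the boundary strata arising from $\uD_Y$ (where the section $s$ is \emph{a priori} undefined). Continuity should be forced by the fact that $\tilde{\phi}$ is a virtual morphism between log schemes sharing the same underlying scheme $\uY$, so that $\KN{\tilde{\phi}'}$ is naturally a section of the torus bundle $\KN{\widehat{X}^\circ_Y} \to \KN{Y}$ whose boundary values are dictated by the Deligne--Faltings structure. Making this precise requires careful tracking of the positive log structures on $\KN{N^\circ_Y X}$ and on the relevant subspaces, and is the technical heart of the argument.
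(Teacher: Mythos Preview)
Your approach is correct and in fact supplies the detail that the paper leaves implicit: the paper's ``proof'' is only the single sentence ``Since $\phi$ factors through the pullback log structure, we deduce from the above that its action on cohomology is as follows,'' so the diagram-chase you carry out, together with the radial homotopy on Kato--Nakayama spaces, is exactly what is needed to justify that sentence. Two small points are worth noting.

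First, a notational slip: your displayed reduction $j^* \circ (\tilde{\phi}')^* = (j')^* \circ s^*$ has the factors on the right in the wrong order; since $(j')^*\colon \coH{N^\circ_Y X}\to\coH{N_{\uY}\uX\setminus N_{\uY}\uD}$ and $s^*\colon\coH{N_{\uY}\uX\setminus N_{\uY}\uD}\to\coH{\uY\setminus\uD_Y}$, the composable expression is $s^*\circ(j')^* = (j'\circ s)^*$. With that correction your reduction is exactly $(\tilde{\phi}'\circ j)^* = (j'\circ s)^*$, which is what your homotopy establishes.

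Second, the ``main obstacle'' you flag --- extending the radial homotopy over the boundary of $\KN{Y}$ --- is not actually an obstacle. Once you have composed with $j^*$, both sides are pullbacks along continuous maps $(\uY\setminus\uD_Y)(\CC)\to\KN{N^\circ_Y X}$, so the homotopy only needs to be defined on the open locus $(\uY\setminus\uD_Y)(\CC)$, where your scaling $\lambda\mapsto\lambda\cdot s(y)$ is unproblematic: for $\lambda\in(0,1]$ it lands in the interior, and the limit as $\lambda\to 0$ is precisely the boundary KN-point with angular datum $\arg s(y)$, which equals $\KN{\tilde{\phi}'\circ j}(y)$ by the description in \autoref{ex:DF-KN}. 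No extension across $\uD_Y$ is required, so the ``technical heart'' you anticipate is empty.
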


\begin{remark}
    In de Rham cohomology, the map $\phi^*$ has the following description as a ``regularized pullback'', in which forms with logarithmic poles along $Y$ are first made nonsingular by formally subtracting their poles, and then pulled back to $\uY$ in the usual way.
    
    Recall that we have $\forms[1]{X/\KK} \cong \forms[1]{\uX}(\log\uD)$, the sheaf of differential forms with logarithmic poles, so that taking residues along the components of $\uD$ containing $\uY$ gives an exact sequence
    \begin{equation}
    \begin{tikzcd}
    0 \ar[r] & \forms[1]{\uY\setminus\uD_Y} \ar[r] &\forms[1]{X}|_{\uY\setminus\uD_Y} \ar[r,"\mathrm{Res}"] & \cO{\uY\setminus\uD_Y}^{\oplus j} \ar[r] & 0
    \end{tikzcd} \label{eq:log-residue-sequence}
    \end{equation}
    where $j = \mathrm{codim}(\uY,\uX)$.  The section $s$ trivializes the normal bundle of each divisor component along $\uY$, so it is equivalent to the data of the 1-jet of a collection of defining equations $y_1,\ldots,y_j$ for the components along $\uY\setminus\uD$.  The pullback $\phi^*\colon \forms[1]{X} \to\forms[1]{\uY\setminus \uD_{\uY}}$ is then the (left) splitting of the sequence \eqref{eq:log-residue-sequence} given by the formula
    \[
    \phi^* \omega = \uphi^*\rbrac{\omega - \sum_{i}\mathrm{Res}_{y_i=0}(\omega)\dlog{y_i}} \in \forms[1]{\uY\setminus\uD_Y}
    \]
    which induces the map on forms of all degrees by exterior product.
\end{remark}

Note that the argument works at cochain level, i.e.~in the derived category. Consequently, if $X_\bullet$ is any diagram in $\WLogSch$ whose objects are isomorphic to strata in normal crossing divisors equipped with the induced log structure, and whose morphisms are isomorphic to virtual morphisms lifting inclusions of strata, then $X_\bullet$ has a canonical realization in the derived category of motives over $\KK$.  It follows that the ``periods'' defined as the matrix coefficients of the  Betti--de Rham comparison map for the total cohomology of such diagrams are all classical periods over $\KK$ in the sense of \cite{KontsevichZagier}. 

\section{The little disks operad
}
\label{sec:little-disks}
In this final section, we explain how the topological operad of little disks can be lifted to an operad of ideally smooth log schemes over $\Spec{\ZZ}$, constructed as a moduli space of stable curves of genus zero with tangential basepoints.  The operadic compositions will be defined by virtual morphisms that transport tangential basepoints around the curves, following Beilinson's strategy in~\cite{BeilinsonLetterToKontsevich}. 

\subsection{Transporting tangential basepoints on a curve}

A remarkable property of smooth rational curves is that although their tangent bundles are nontrivial, there is a canonical recipe for transporting tangential basepoints between different points on the curve.  In geometric terms, one first applies a conformal transformation of a sphere to arrange that the points are antipodal, and then applies the antipodal involution.  Since the latter map reverses orientations, it has no direct algebraic analogue.  Our aim in this subsection is to give a purely algebraic construction of this map via virtual morphisms, and explain how to extend it to singular curves decorated with additional ``gluing data'' at the nodes, to be defined below.

Throughout the present subsection, we fix a tree of rational curves over a field $\KK$, which we denote by $C$, and we fix in addition a pair $p\neq q \in C$ of distinct smooth points of $C$.   In other words, $C$ is a connected projective curve of genus zero with at worst nodal singularities, whose irreducible components are all smooth, and the points $p$ and $q$ each lie on a unique irreducible component of $C$.  Note that $p$ and $q$ may or may not lie on the same component of $C$; this dichotomy  will be important in what follows.

\begin{lemma}\label{lem:s-tensor}
The following statements hold:
\begin{enumerate}
\item The space $\coH[0]{C,\cO{C}(q-p)}$ is one-dimensional. Any non-zero element $f$ of this space is regular near $p$ and its reciprocal $\frac{1}{f}$ is regular near $q$.
\item There is a unique  element $s_{p,q} \in T^\vee_pC \otimes T^\vee_q C$ such that
\[
s_{p,q} = \dd f|_p \otimes \dd(\tfrac{1}{f})|_q 
\]
for every nonzero element $f \in \coH[0]{C,\cO{C}(q-p)}$.  
\item We have $s_{p,q} \neq 0
$ if and only if $p$ and $q$ lie on the same irreducible component of $C$.
\end{enumerate}
\end{lemma}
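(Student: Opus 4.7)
The plan is to analyze a global section of $\cO{C}(q-p)$ component by component on the dual tree of $C$, exploiting that each irreducible component $C_i \cong \PP^1$ and that a rational function on $\PP^1$ with no prescribed poles is constant. Let $C_{i_0}$ and $C_{j_0}$ be the components containing $p$ and $q$, respectively. A section $f \in \coH[0]{C,\cO{C}(q-p)}$ is equivalent to a tuple of rational functions $f_i$ on the $C_i$ matching at every node, with $f_{i_0}$ vanishing to order $\ge 1$ at $p$, $f_{j_0}$ having a pole of order $\le 1$ at $q$, and each $f_i$ regular away from these prescribed points. In particular, on any component containing neither $p$ nor $q$, $f_i$ is a regular function on $\PP^1$, hence a constant.

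For part (1), the argument splits into two cases. If $i_0 = j_0$, then $f_{i_0}$ is forced to be of the form $\lambda\cdot(z-p)/(z-q)$ in any affine coordinate, giving a one-parameter family; the resulting values at nodes on $C_{i_0}$ are all nonzero, uniquely determining the constant on each adjacent component, and since $C$ is a tree this propagates unambiguously to the entire curve. If $i_0 \neq j_0$, then $f_{i_0}$ is a regular function on $\PP^1$ vanishing at $p$, hence identically zero; propagating this along the unique path of components connecting $C_{i_0}$ to $C_{j_0}$, each intermediate component carries $f \equiv 0$, so the node at which this path enters $C_{j_0}$ must be a zero of $f_{j_0}$. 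Combined with the simple pole at $q$, this forces $f_{j_0} = \lambda\cdot(z-\text{node})/(z-q)$, again a one-parameter family that propagates uniquely to the rest of the tree.

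For part (2), the independence of $s_{p,q}$ from the choice of $f$ follows by direct calculation: under $f \mapsto \mu f$ we have $\dd f|_p \mapsto \mu\,\dd f|_p$ and $\dd(1/f)|_q \mapsto \mu^{-1}\dd(1/f)|_q$, so the tensor product is invariant. Part (3) then falls out of the case analysis: in the first case, $f|_{C_{i_0}}$ has a genuine simple zero at $p$ and a simple pole at $q$, so both differentials are nonzero and $s_{p,q}\neq 0$; in the second case, $f|_{C_{i_0}} \equiv 0$ forces $\dd f|_p = 0$ and hence $s_{p,q}=0$.

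No step looks genuinely hard; the argument is combinatorial on the dual tree. The one point deserving care is the verification that node-value propagation is well-defined, which uses precisely that trees have no cycles, so that one may traverse outward from $C_{i_0}\cup C_{j_0}$ without ever returning to an already-constrained component. A more conceptual alternative would be to compute $\chi(\cO{C}(q-p))=1$ via Riemann--Roch on the Gorenstein curve $C$ and vanish $h^1$ by Serre duality component by component, but the direct analysis above yields the explicit form of $f$ needed for parts (2) and (3) at no extra cost.
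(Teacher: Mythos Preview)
Your proof is correct and follows essentially the same approach as the paper's: a component-by-component analysis on the dual tree, splitting into the cases $i_0=j_0$ and $i_0\neq j_0$, with propagation of constant values along the tree in each case, and the scaling argument for (2). The paper phrases the one-dimensionality slightly differently, via the restriction isomorphisms $\coH[0]{C,\cO{C}(q-p)}\to\coH[0]{\tilde C,\cO{\tilde C}(q-p)}$ (same component) and $\coH[0]{C,\cO{C}(q-p)}\to\coH[0]{C_q,\cO{C_q}(q-\nu)}$ (different components), but the content is identical.
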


\begin{proof}
(2) follows immediately from (1) since the formula defining $s_{p,q}$ is invariant under rescaling $f$. To prove (1) and (3) we treat the two cases separately: $p$ and $q$ on the same irreducible component of $C$, or on different components.

First assume that $p$ and $q$ are on the same irreducible component $\tilde{C}$ of $C$. Then since $\tilde{C}\cong \PP^1$ it is clear that $\coH[0]{\tilde{C},\mathcal{O}_{\tilde{C}}(q-p)}$ has dimension $1$ and that its non-zero elements $f$ have a zero of order $1$ at $p$ and a pole of order $1$ at $q$, thus satisfying $df|_p\neq 0$ and $d(\frac{1}{f})|_q\neq 0$. The claim follows from the fact that the restriction map $\coH[0]{C,\mathcal{O}_C(q-p)}\to \coH[0]{\tilde{C},\mathcal{O}_{\tilde{C}}(q-p)}$ is an isomorphism: a section of $\mathcal{O}_{\tilde{C}}(q-p)$ extends uniquely to a section of $\mathcal{O}_C(q-p)$, which is necessarily constant on all of the other irreducible components of $C$.

Now assume that $p$ and $q$ are on different irreducible components of $C$, denoted by $C_p$ and $C_q$ respectively. Any $f\in \coH[0]{C,\cO{C}(q-p)}$ is regular on $C_p$ and vanishes at $p$, hence vanishes identically on $C_p$. By the same reasoning, $f$ vanishes on every component of $C$ that touches $C_p$ but does not contain $q$. By iterating this reasoning, one sees that $f$ has to vanish at the node $\nu$ of $C_q$ that is on the only path in the dual tree of $C$ between $C_p$ and $C_q$. Therefore, we have a restriction map $\coH[0]{C,\cO{C}(q-p)}\to \coH[0]{C_q,\cO{C_q}(q-\nu)}$, which is easily seen to be an isomorphism: a section of $\cO{C_q}(q-\nu)$ extends uniquely to a section of $\cO{C}(q-p)$ which is constant on each irreducible component other than $C_q$. Since $f$ is zero near $p$, we have $df|_p=0$ and therefore $s_{p,q}= 0$.
\end{proof}

We conclude that if $p,q \in C$ lie on the same component, then $s_{p,q}$ induces a canonical linear isomorphism $T_pC \cong T_q^\vee C$. Composing it with the inversion map $(T_q^\vee C)^\times\cong (T_qC)^\times$, we obtain a bijection
    \[
    (T_pC)^\times \cong (T_qC)^\times
    \]
    between the sets of tangential basepoints on $C$ at $p$ at $q$, which is $\Gm$-equivariant with respect to the inversion automorphism of $\Gm$. In view of \autoref{prop:morphisms-as-monomial-maps}, it will be more convenient to think of this bijection as a virtual isomorphism between the lines $T_pC$ and $T_qC$, viewed as log structures on a point (both non canonically isomorphic to the log point $\logpt$). We therefore get the following.
 
    \begin{corollary}\label{cor:transport-tangential-basepoints}
     If $p,q$ lie on the same irreducible component, then we get a canonical virtual isomorphism
     \[
     s_{p,q}\colon T_pC \stackrel{\sim}{\to} T_qC.
     \]
    \end{corollary}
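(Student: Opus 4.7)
The plan is to apply the Deligne--Faltings dictionary of \autoref{prop:morphisms-as-monomial-maps} in the case of empty divisor and a line bundle on a point: this identifies virtual morphisms $T_pC \to T_qC$ of log schemes over $\KK$ with monomial maps of $\KK$-varieties $(T_pC)^\times \to (T_qC)^\times$, among which the ordinary morphisms are those equivariant with respect to a character of $\Gm$ of non-negative exponent. It therefore suffices to exhibit, canonically from $s_{p,q}$, a monomial isomorphism $(T_pC)^\times \xrightarrow{\sim} (T_qC)^\times$ equivariant for the inversion character $t\mapsto t^{-1}$ of $\Gm$.

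First I would invoke \autoref{lem:s-tensor}(3): since $p$ and $q$ lie on the same component, $s_{p,q}\in T_p^\vee C\otimes T_q^\vee C$ is nonzero, so contraction on the first tensor factor yields a canonical linear isomorphism $\sigma\colon T_pC\xrightarrow{\sim} T_q^\vee C$, which restricts on punctured lines to a monomial isomorphism equivariant with respect to $\id{\Gm}$. I would then compose with the canonical inversion isomorphism
\[
\iota\colon (T_q^\vee C)^\times \xrightarrow{\sim} (T_qC)^\times, \qquad \xi \mapsto \xi^{-1},
\]
where $\xi^{-1}$ is characterized by $\langle \xi,\xi^{-1}\rangle = 1$. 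The map $\iota$ is monomial and equivariant for $t\mapsto t^{-1}$; this is precisely the prototype of \autoref{ex:inverting-line-bundle}. The composite $\iota\circ\sigma$ is then a monomial isomorphism $(T_pC)^\times\xrightarrow{\sim}(T_qC)^\times$ equivariant for $t\mapsto t^{-1}$, and by \autoref{prop:morphisms-as-monomial-maps} it corresponds to a virtual (not ordinary) morphism of log schemes $s_{p,q}\colon T_pC\to T_qC$.

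To promote $s_{p,q}$ to a virtual \emph{isomorphism}, I would construct a candidate inverse $s_{q,p}\colon T_qC\to T_pC$ by the same recipe with the roles of $p$ and $q$ exchanged. The composite $s_{q,p}\circ s_{p,q}$ is then a monomial self-map of $(T_pC)^\times$ equivariant for $\id{\Gm}$, hence multiplication by a single scalar; that scalar is $1$ by the symmetry identity $s_{p,q}(v,w) = s_{q,p}(w,v)$, which itself follows from \autoref{lem:s-tensor} by taking $g=1/f$ as defining section of $\cO{C}(p-q)$ whenever $f$ is chosen to compute $s_{p,q}$ from $\coH[0]{C,\cO{C}(q-p)}$. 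Canonicity of $s_{p,q}$ is automatic since $\sigma$ and $\iota$ are defined intrinsically, with no dependence on the choice of $f$.

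I do not anticipate any genuine obstacle: the entire argument is a direct repackaging of \autoref{lem:s-tensor} through the monomial-map dictionary of \autoref{prop:morphisms-as-monomial-maps}. The only conceptual point worth emphasizing is that the inversion on a one-dimensional line is naturally a \emph{virtual}, and not ordinary, isomorphism of the associated log structures; without the formalism of virtual morphisms developed in this paper, the bijection $(T_pC)^\times\cong (T_qC)^\times$ would have no algebro-geometric incarnation as a morphism of log schemes at all.
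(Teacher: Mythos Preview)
Your proposal is correct and follows essentially the same route as the paper: the paper's argument (given in the paragraph immediately preceding the corollary) likewise uses \autoref{lem:s-tensor}(3) to get the linear isomorphism $T_pC\cong T_q^\vee C$, composes with the inversion $(T_q^\vee C)^\times\cong (T_qC)^\times$, and invokes \autoref{prop:morphisms-as-monomial-maps} to interpret the resulting $\Gm$-antilinear bijection as a virtual isomorphism. Your explicit verification that $s_{q,p}\circ s_{p,q}=\id{}$ via the symmetry $s_{p,q}(v,w)=s_{q,p}(w,v)$ is a nice addition but not strictly needed, since the map is already exhibited as a composite of two monomial isomorphisms.
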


    \begin{example}\label{ex:transport-tangential-basepoints-easy}
    If $C=\PP^1$ with coordinate $z$ then we have, for each $p,q\in C\setminus\{\infty\}$ and $\lambda\in\KK^\times$,
    \[
    s_{p,\infty} (\lambda\partial_z|_p) = \lambda^{-1}\partial_{1/z}|_\infty \quad \mbox{ and } \quad s_{\infty,q}(\lambda\partial_{1/z}|_\infty) = \lambda^{-1}\partial_z|_q,
    \]
    explicitly exhibiting the $\Gm$-antilinearity coming from the inversion.
    \end{example}

    On the other hand, if $p$ and $q$ lie on distinct irreducible components, then $s_{p,q}$ is zero, so it no longer induces a correspondence between tangential basepoints.  In order to construct the desired correspondence we need to transport tangent vectors through the nodes lying between $p$ and $q$ via the following procedure.

Since $C$ is a tree of rational curves, the points $p$ and $q$ are joined by a unique chain of irreducible components $C_p = \tilde C_0 , \tilde C_1 ,\ldots, \tilde C_n = C_q$ such  that each component $\tilde C_i$ intersects the previous component $\tilde C_{i-1}$ in a node $\nu_i$ and all other pairwise intersections are empty. We would like to transport tangent vectors from $p$ to $q$ via the intermediate points $\nu_i$ using the chain of isomorphisms induced by $s_{p,\nu_1}$, $s_{\nu_1,\nu_2},\ldots,s_{\nu_n,q}$.  However, these maps do not compose: the image of $s_{\nu_{i-1},\nu_i}$ is $T_{\nu_i} C_i$, whereas the domain of $s_{\nu_i,\nu_{i+1}}$ is $T_{\nu_i} C_{i+1}$, i.e.~they correspond to tangent vectors at $\nu_i$ which lie on different components of $C$, and there is no canonical isomorphism between these tangent spaces---not even a virtual one.  We therefore need to specify such an isomorphism as part of the data, which motivates the following.
\begin{definition}
    Let $\nu \in C$ be a node, and let $C' \neq C''$ be the distinct irreducible components of $C$ containing $\nu$.   A \defn{gluing datum at $\nu$} is a nonzero element of the tensor product $T_\nu C' \otimes T_{\nu} C'' $.
\end{definition}

Thus a gluing datum $\eta \in T_\nu C' \otimes T_\nu C''$ defines a linear isomorphism $T_\nu C'\cong T_\nu^\vee C''$, and composing with the inversion $(T_\nu^\vee C'')^\times\cong (T_\nu C'')^\times$ we get a virtual isomorphism $T_\nu C'\stackrel{\sim}{\to} T_\nu C''$. Consequently, if $\eta_1,\ldots,\eta_n$ are gluing data at $\nu_1,\ldots,\nu_n$, we have a composable chain of virtual isomorphisms
\begin{equation}
\begin{tikzcd}[column sep=2.5em]
    T_pC = T_p \tilde C_0\ar[r,"s_{p,\nu_1}"] &T_{\nu_1}\tilde C_0 \ar[r,"\eta_1"]  & T_{\nu_1}\tilde C_1 \ar[r,"{s_{\nu_1,\nu_2}}"] & \cdots \ar[r,"{s_{\nu_n,q}}"] & T_q \tilde C_n = T_qC
\end{tikzcd}\label{eq:composite-transport}
\end{equation}
\begin{corollary}\label{cor:transport-tangential-basepoints-through-nodes}
    If $\nu_1,\ldots,\nu_n$ are the nodes lying between $p$ and $q$, and $\eta_1,\ldots,\eta_n$ are gluing data at these nodes, the composite \eqref{eq:composite-transport} defines a canonical virtual isomorphism  $T_pC \stackrel{\sim}{\to} T_qC$
\end{corollary}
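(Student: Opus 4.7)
The corollary is a composition lemma: my plan is to verify that each arrow appearing in \eqref{eq:composite-transport} is a well-defined virtual isomorphism of one-dimensional log structures on $\Spec{\KK}$ (each non-canonically isomorphic to $\logpt$), and that the source of each arrow matches the target of the preceding one. Once this is established, the composite is automatically a virtual isomorphism, since virtual isomorphisms can be composed in $\WLogSch_\KK$.

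For the arrows of the form $s_{\nu_i,\nu_{i+1}}$ (with the convention $\nu_0:=p$ and $\nu_{n+1}:=q$), the crucial point is that each is to be interpreted as the output of \autoref{cor:transport-tangential-basepoints} applied to a \emph{single} smooth rational component $\tilde C_i$, not to the full nodal curve $C$. Indeed, both $\nu_i$ and $\nu_{i+1}$ lie on $\tilde C_i$, and although each may be a node of $C$, both are smooth points of the irreducible component $\tilde C_i$, so the hypotheses of \autoref{cor:transport-tangential-basepoints} are satisfied on $\tilde C_i$ and it produces a canonical virtual isomorphism $T_{\nu_i}\tilde C_i \stackrel{\sim}{\to} T_{\nu_{i+1}}\tilde C_i$. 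At the two ends of the chain this matches the arrows $s_{p,\nu_1}$ and $s_{\nu_n,q}$ appearing in \eqref{eq:composite-transport} via the equalities $T_p C = T_p \tilde C_0$ and $T_q C = T_q \tilde C_n$, which hold because $p$ and $q$ are smooth points of $C$ lying only on $\tilde C_0$ and $\tilde C_n$ respectively.

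For the arrows $\eta_i$, a gluing datum at $\nu_i$ is by definition a nonzero element of the one-dimensional $\KK$-vector space $T_{\nu_i}\tilde C_{i-1}\otimes T_{\nu_i}\tilde C_i$, equivalently a linear isomorphism $T_{\nu_i}\tilde C_{i-1}\stackrel{\sim}{\to} T_{\nu_i}^\vee\tilde C_i$. Post-composing with the inversion $(T_{\nu_i}^\vee\tilde C_i)^\times\cong (T_{\nu_i}\tilde C_i)^\times$ of \autoref{ex:inverting-line-bundle} yields a virtual isomorphism $T_{\nu_i}\tilde C_{i-1}\stackrel{\sim}{\to} T_{\nu_i}\tilde C_i$, in direct analogy with the passage from the tensor $s_{p,q}$ to the virtual isomorphism of \autoref{cor:transport-tangential-basepoints}. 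With domains and codomains matching by construction, the composite \eqref{eq:composite-transport} is a well-defined virtual isomorphism $T_p C\stackrel{\sim}{\to} T_q C$, canonical once the data $\eta_1,\ldots,\eta_n$ are fixed. The main obstacle, such as it is, is the bookkeeping in the second paragraph: ensuring that the symbols $s_{\nu_i,\nu_{i+1}}$ are correctly interpreted on $\tilde C_i$ rather than on $C$, which is in fact forced on us since \autoref{lem:s-tensor} requires its input points to be smooth on the ambient curve.
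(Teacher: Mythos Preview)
Your proposal is correct and follows exactly the approach of the paper, which treats the corollary as an immediate consequence of the discussion preceding it: the paper simply observes that the gluing data $\eta_i$ yield virtual isomorphisms $T_{\nu_i}\tilde C_{i-1}\stackrel{\sim}{\to} T_{\nu_i}\tilde C_i$ and that the resulting chain \eqref{eq:composite-transport} is composable. Your write-up is a more explicit unpacking of the same bookkeeping, including the point that each $s_{\nu_i,\nu_{i+1}}$ is to be read on the smooth component $\tilde C_i$.
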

In terms of classical multi-linear algebra, this virtual morphism is obtained by starting with the tensor
\[
s_{p,\nu_1}\otimes \eta_1 \otimes s_{\nu_1,\nu_2}\otimes \cdots \otimes s_{\nu_n,q} \in (T^\vee_p \tilde C_0 \otimes T_{\nu_1}^\vee\tilde C_0) \otimes (T_{\nu_1}\tilde C_0\otimes T_{\nu_1} \tilde C_1) \otimes \cdots \otimes  (T^\vee_{\nu_n}\tilde C_n\otimes T_q^\vee \tilde C_n)
\]
and contracting the adjacent factors $T^\vee_{\nu_i}\tilde C_i \otimes T_{\nu_i} \tilde C_i \cong \KK$ to obtain a nonzero element in $T_p^\vee C\otimes T_q^\vee C$.  The latter defines a linear isomorphism $T_pC\cong T_q^\vee C$ which we compose with the inversion $(T_q^\vee C)^\times\cong (T_q C)^\times$ to obtain the virtual isomorphism of \autoref{cor:transport-tangential-basepoints-through-nodes}.

\begin{example}
    Let $C_1=\PP^1$ with coordinate $z_1$, $C_2=\PP^1$ with coordinate $z_2$, and $C$ be the nodal curve obtained by identifying a point $p_1\in C_1\setminus\{\infty\}$ with $\infty\in C_2$. A gluing datum at the resulting node $\nu$ is given by an element
    \[
    \eta = a \, \partial_{z_1}|_{p_1}\otimes \partial_{1/z_2}|_\infty
    \]
    with $a\in\KK^\times$. We view it as the virtual isomorphism $\eta\colon T_\nu C_1 \stackrel{\sim}{\to} T_\nu C_2$ given by
    \[
    \lambda \partial_{z_1}|_{p_1} \mapsto a\lambda^{-1} \partial_{1/z_2}|_\infty.
    \]
    Therefore, if we let $p=\infty\in C_1$ and $q=q_2\in C_2\setminus\{\infty\}$ we see, thanks to the computation of \autoref{ex:transport-tangential-basepoints-easy}, that the virtual isomorphism $s_{p,q}^\eta\colon T_pC\to T_q C$ is computed by 
    \[
    \lambda \partial_{1/z_1}|_\infty \mapsto \lambda^{-1}\partial_{z_1}|_{p_1} \mapsto a\lambda \partial_{1/z_2}|_\infty \mapsto a^{-1}\lambda^{-1}\partial_{z_2}|_{q_2}.\qedhere
    \]
\end{example}

\subsection{Universal  transport  over the moduli space}

We now explain how to perform the construction of the previous subsection for \emph{families} of curves, explaining what happens when a smooth curve degenerates to a singular one.  In other words, we perform the construction universally over the moduli space of stable marked curves of genus zero.

\subsubsection{Moduli spaces of curves}
We refer the reader to \cite{Knudsen, Keel, GoncharovManin} for details on moduli spaces of curves. For a finite set $A$ with $n \ge 2$ elements, let $\bM{A} \cong \bM{0,n+1}$ denote the moduli space of stable curves of genus zero with marked points labelled by the set $A\sqcup \{\infty\}$.  It is a smooth projective scheme over $\Spec{\ZZ}$ parameterizing isomorphism classes of trees of rational curves as above, equipped with a collection of smooth points labelled by $A$, such that each irreducible component has at least three ``special points'', i.e.~nodes and/or marked points.

 We denote by $\M{A}\subset \bM{A}$ the open set parameterizing smooth curves; it is affine.  Its complement is a strict  normal crossing divisor $\bbM{A}\subset\bM{A}$ parameterizing singular curves.  The irreducible components of $\bbM{A}$ are indexed by partitions $A = A' \sqcup A''$ with $1<\# A'' < \#A$: to such a partition corresponds the locus 
 \begin{align}
 D_{A',A''} \subset \bbM{A} \label{eq:boundary-cpt}
 \end{align}
 of nodal curves for which the markings labelled by $A'\sqcup \{\infty\}$ are separated from the markings labelled $A''$ by a node. More generally, the higher-codimension strata of $\bbM{A}$ are indexed by the following data:
\begin{definition}
    The \defn{dual graph $\tau(C)$} of an element $C \in \bM{A}$ is the following rooted tree:
    \begin{itemize}
        \item The root is $\infty$
        \item The leaves are the elements of $A$
        \item The internal vertices are the irreducible components of $C$
        \item The edges are labelled by the special points of $C$
        \item An edge is incident to a vertex if and only if the corresponding special point lies on the corresponding irreducible component.
    \end{itemize}
   \end{definition}
 
 Thus, in particular, $D_{A',A''}$ is the locus of curves whose dual graph contracts to a tree with root labelled $\infty$ and leaves labelled with $A'$ and $A''$ as in  \autoref{fig:divisor-dual-tree}.
 
 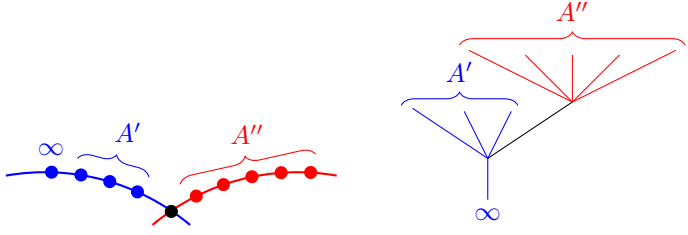
\begin{figure}
     \centering
     \begin{subfigure}{0.4\textwidth}
     \begin{tikzpicture}
     \draw [blue,
    decorate,decoration={brace,amplitude=5pt,raise=4ex}]
  (-0.65,0.3)--(0.3,0.05) node[midway,yshift=3em,xshift=1em,blue]{$A'$};
  \draw[blue] (-0.85,1) node {$\infty$};
        \draw[blue,thick,decoration={markings,mark=between positions 0.35 and 0.9 step 0.15 with \arrow{Circle}},postaction={decorate}] (1,0) arc (50:100:3) ;
        \draw[red,thick,decoration={markings,mark=between positions 0.3 and 0.9 step 0.15 with \arrow{Circle}},postaction={decorate}] (0.5,0) arc (130:80:3) ;
         \draw [red,
    decorate,decoration={brace,amplitude=5pt,raise=4ex}]
  (1,-0.05)--(2.8,0.35) node[midway,yshift=3em,xshift=-0.4em,red]{$A''$};
  \draw[fill] (0.75,0.18) circle (0.08);
     \end{tikzpicture}
     \end{subfigure}
     \begin{subfigure}{0.4\textwidth}
     \begin{tikzpicture}[scale=0.5]
  \node[blue] {$\infty$} [grow'=up]
    child[blue] { 
        child {node {}}
        child {node {}}
        child {node {}}
        child[black] {
            child[red] {node {}}
            child[red] {node {}}
            child[red] {node {}}
            child[red] {node {}}
            child[red] {node {}}
        }
    };
    \draw [blue,
    decorate,decoration={brace,amplitude=5pt,mirror,raise=4ex}]
  (0.8,1.7) -- (-2.3,1.7) node[midway,yshift=3em,blue]{$A'$};
    \draw [red,
    decorate,decoration={brace,amplitude=5pt,mirror,raise=4ex}]
  (5.25,3.3) -- (-0.75,3.3) node[midway,yshift=3em,red]{$A''$};
    \end{tikzpicture}
    \end{subfigure}
     \caption{A curve defining a point of $D_{A',A''}$ (left) and its dual rooted tree (right).}
     \label{fig:divisor-dual-tree}
 \end{figure}
 
\subsubsection{Tangential basepoints on the universal curve}

Let $\pi_A \colon \Crv{A} \to \bM{A}$ be the universal marked curve: if $C$ is a marked curve, then the fibre of $\Crv{A}$ at $[C]\in\bM{A}$ is isomorphic to $C$.  If $*$ is an additional symbol then $\Crv{A} \cong \bM{A\sqcup \{*\}}$ with the projection to $\bM{A}$ given by forgetting the location of $*$.  An element $s \in A\sqcup\{\infty\}$ corresponds to a section $\bM{A} \to \Crv{A}$ indicating the location of the marked point labelled $s$; we shall abuse notation and denote this section also by $s$.  We denote by
\[
\Tinf[s]{A} := \left.
\mathcal{T}_{\Crv{A}/\bM{A}}\right|_{s}
\]
the pullback of the relative tangent bundle of $\pi_A$ via $s$. It is a line bundle on $\bM{A}$ whose fibre at  $[C] \in \bM{A}$ is canonically identified with the tangent space $T_s C$. We also consider
\[
\Tinfx[s]{A} := \Tinf[s]{A} \setminus 0 ,
\]
the complement of the zero section.  Thus a point in $\Tinfx[s]{A}$ is a tangential basepoint at one of the marked points in the universal curve. 

\subsubsection{Universal transport maps}

Given distinct markings $p,q \in A \sqcup\{\infty\}$, we shall explain how to construct a virtual isomorphism between the line bundles $\Tinf[p]{A}$ and $\Tinf[q]{A}$ relative to the normal crossing divisor $(\bM{A},\bbM{A})$, which when applied to virtual points induces the transport of tangential basepoints on each fibre of $\Crv{A}$ for which $p,q$ lie on the same component.

Consider the sheaf $\cO{\Crv{A}}(q-p)$ on the universal curve, where we abuse notation and write $p$ and $q$ for the divisors in the total space of  $\Crv{A}$ given by the images of the corresponding sections $\bM{A} \to \Crv{A}$.  
\begin{lemma}
The sheaf  $\cO{\Crv{A}}(q-p)$ is a trivial line bundle on $\Crv{A}$.  Hence the same is true for the direct image $\pi_{A*}\cO{\Crv{A}}(q-p)$ on $\bM{A}$.  
\end{lemma}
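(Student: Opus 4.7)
The plan is to construct an explicit nowhere-vanishing global section of $\cO{\Crv{A}}(q-p)$, which establishes the triviality of the line bundle on $\Crv{A}$. The triviality of the direct image on $\bM{A}$ then follows at once from the projection formula, since $\pi_A$ is proper with geometrically connected fibres and so $\pi_{A*}\cO{\Crv{A}} = \cO{\bM{A}}$.

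By \autoref{lem:s-tensor}, the space $H^0(C,\cO{C}(q-p))$ is one-dimensional on every stable fibre $C$, generated by a rational function whose divisor on $C$ equals $p-q$ exactly. Such a generator is nowhere vanishing when viewed as a section of $\cO{C}(q-p)$, and therefore trivializes the restriction of the line bundle to that fibre. The task is to choose such generators coherently across the whole family.

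My candidate section comes from the forgetful morphism
\[
\varphi \colon \Crv{A} = \bM{A\sqcup\{*\}} \longrightarrow \bM{\{p,q,*,\infty\}} \cong \PP^1_\lambda,
\]
which keeps only the four markings $p, q, *, \infty$. Normalize $\lambda$ on the target so that $\lambda = 0$ corresponds to the boundary partition $\{p,*\}|\{q,\infty\}$ and $\lambda = \infty$ to $\{q,*\}|\{p,\infty\}$. The pullback $\widetilde\lambda = \varphi^*\lambda$ is then a rational function on $\Crv{A}$ whose restriction to any smooth fibre generates $H^0(C,\cO{C}(q-p))$; on the preimage of the smooth locus $\M{A}\subset\bM{A}$ its divisor equals $(p)-(q)$.

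The main obstacle is to show that $\widetilde\lambda$ extends over the whole of $\Crv{A}$ to a rational function with divisor exactly $(p)-(q)$, making it a global trivializing section. On boundary strata of $\Crv{A}$ whose image under $\varphi$ lies in $\{0,\infty\}\subset \PP^1_\lambda$, the divisor of $\widetilde\lambda$ acquires extra contributions beyond the sections $(p)$ and $(q)$. I would show that all such extra contributions can be removed by multiplying $\widetilde\lambda$ by the pullback of a suitable rational function on $\bM{A}$: this reduces the problem to a combinatorial identity in $\mathrm{Pic}(\bM{0,n})$ accessible via Keel's relations applied to the four-element set $\{p, q, *, \infty\}$ and the remaining markings in $A\setminus\{p,q\}$. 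Once this correction is achieved, the resulting trivializing section completes the proof.
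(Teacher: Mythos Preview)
Your forgetful map $\varphi$ is exactly the paper's cross-ratio: the paper writes $f(x)=[qp|rx]$ for a third marking $r$, which is the forgetful morphism $\Crv{A}=\bM{A\sqcup\{*\}}\to\bM{\{p,q,r,*\}}\cong\PP^1$. The paper simply asserts that this rational function trivializes $\cO{\Crv{A}}(q-p)$. You have gone further and correctly observed that $\widetilde\lambda$ acquires extra boundary components in its divisor; this is a genuine issue that the paper's proof glosses over.

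However, your proposed remedy---correcting by a pullback from $\bM{A}$---cannot succeed. Already for $A=\{p,q,a\}$ one finds
\[
\divf(\widetilde\lambda)=(p)-(q)+D_{\{q,\infty\}}-D_{\{p,\infty\}}
\]
on $\Crv{A}=\bM{0,5}$, and neither $D_{\{q,\infty\}}$ nor $D_{\{p,\infty\}}$ is a $\pi_A$-pullback (each is only one of the two components of a fibre of $\pi_A$ over a boundary point of $\bM{0,4}$). No Keel-relation manipulation will convert the discrepancy into something vertical.

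In fact the first sentence of the lemma is \emph{false} for $|A|\ge 3$. Take $[C]\in\bM{A}$ a nodal curve with $p$ and $q$ on different components $C_p,C_q$. Since the section $q$ misses $C_p$ while the section $p$ meets it in one point,
\[
\cO{\Crv{A}}(q-p)\big|_{C_p}\;\cong\;\cO{C_p}(-p)\;\cong\;\cO{\PP^1}(-1),
\]
which is nontrivial; hence $\cO{\Crv{A}}(q-p)$ cannot be trivial on $\Crv{A}$. (One can also check that the pushforward is nontrivial: the ``leading term at $q$'' map gives an isomorphism $\pi_{A*}\cO{\Crv{A}}(q-p)\cong \Tinf[q]{A}$, a $\psi$-line on $\bM{A}$.) So both the paper's proof and your proposal are attempting to establish a statement that does not hold.

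What the paper actually \emph{uses} immediately afterwards is much weaker: that $\pi_{A*}\cO{\Crv{A}}(q-p)$ is a line bundle, so that the expression $s^A_{p,q}=\dd f|_p\otimes \dd(\tfrac{1}{f})|_q$ makes sense and is independent of the local generator $f$ (it is homogeneous of degree zero in $f$). That weaker statement follows directly from \autoref{lem:s-tensor} together with cohomology and base change (each fibre has $h^0=1$ and $h^1=0$), and the rest of the section goes through unchanged.
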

\begin{proof}
    If $r\in A\sqcup \{\infty\}$ is another marked point with $p \neq r$, then we have a well-defined cross-ratio $f(x) := [qp|rx]$ for $x\in \Crv{A}$, giving a rational function on $\Crv{A}$ that vanishes when $x=p$ and has a pole when $x=q$.  This function provides the desired trivialization. The conclusion about the pushforward follows since $\Crv{A} \to \bM{A}$ is a proper morphism with connected fibres, so that $\pi_{A*}\cO{\Crv{A}} \cong \cO{\bM{A}}$.
\end{proof}

Applying the construction of  \autoref{lem:s-tensor} fibrewise on $\Crv{A}$, we deduce that there is a unique section
    \[
    s_{p,q}^A \in  \coH[0]{\bM{A}, T_p^\vee\Crv{A}\otimes T_q^\vee\Crv{A}}
    \]
    such that 
    \[
s^A_{p,q} = \dd f|_p \otimes \dd(\tfrac{1}{f})|_q,
\]
for every nonvanishing local section $f \in \pi_{A*}\cO{\Crv{X}}(q-p)$, where $\dd = \dd_{\Crv{A}/\bM{A}}$ is the relative differential.  Moreover, the vanishing set of $s^A_{p,q}$ is the subdivisor of $\bbM{A}$ consisting of stable curves for which the marked points $p$ and $q$ are separated by a node.  As a consequence, this section defines a virtual isomorphism
\begin{equation}\label{eq:transport-tangential-basepoints-global}
s^A_{p,q}\colon (\bM{A},\bbM{A},\Tinf[p]{A}) \stackrel{\sim}{\to} (\bM{A},\bbM{A},\Tinf[q]{A}).
\end{equation}
of Deligne--Faltings log schemes.

\begin{definition}
    The virtual isomorphisms $s^A_{p,q}$ for $p,q \in A \sqcup\{\infty\}$ are called the \defn{universal transport maps}.
\end{definition}

\subsection{Factorization on the boundary}
From the definition, it is immediate that the universal transport maps restrict to the maps constructed in \autoref{lem:s-tensor} on the smooth fibres of $\Crv{A}$.  We claim that the same is true for the singular fibres, i.e.~that the virtual isomorphism \eqref{eq:transport-tangential-basepoints-global}
factors over the boundary $\bbM{A}$ into compositions of such isomorphisms with appropriate gluing data as in \autoref{cor:transport-tangential-basepoints-through-nodes}. Note that by induction on the number of nodes, the problem reduces to understanding what happens at a single node $\nu$.  Such a node partitions the marked points into two groups: those that are closer to $\infty$ than $\nu$, and those that are farther. We may thus treat the resulting factorization universally as follows.

Let $A =A' \sqcup A''$ be a partition, and consider the divisor
\[
D = D_{A',A''}  \subset \bbM{A}
\]
from \eqref{eq:boundary-cpt} above. Suppose without loss of generality that $p \in A'\sqcup\{\infty\}$ and $q \in A''$; the opposite case follows immediately by inversion.

Let $\nu$ be an additional symbol representing the node.  Then we have an isomorphism
\[
\mapdef{\circ_\nu}{\bM{A'\sqcup\{\nu\}}\times\bM{A''}}{D}
{(C',C'')}{C'\sqcup_\nu C''}
\]
where $C'\sqcup_\nu C''$ is the $A$-marked curve obtained by gluing $\nu \in C'$ to $\infty \in C''$ to form a node, and  marking the images of all $a' \in A'$ and $a'' \in A''$.  To avoid confusion, when we write $\infty$ it will always mean the corresponding point in $C'$; when we need to refer to the point labelled $\infty$ on $C''$, we call it $\nu$ instead.

Let us denote by $\Crv{}=\Crv{A}$ the universal curve on $\bM{A}$, and by $\Crv{}'$ and $\Crv{}''$ the pullbacks to $D$ of the universal curves on $\bM{A'\sqcup\{\nu\}}$ and $\bM{A''}$. Then the recipe above gives a canonical isomorphism 
\[
\Crv{}|_D \cong  \Crv{}' \sqcup_\nu \Crv{}''.
\]
Note that we may view $\Crv{}'$ and $\Crv{}''$ as divisors $D'$ and $D''$ in the total space of $\Crv{}$, whose intersection is the section of $\Crv{}|_D$  indicating the location of the node; see \autoref{fig:degeneration}.  By the above isomorphism, we have the identity
\[
\pi^* D = D' + D''
\]
in the divisor group of $\Crv{}$.

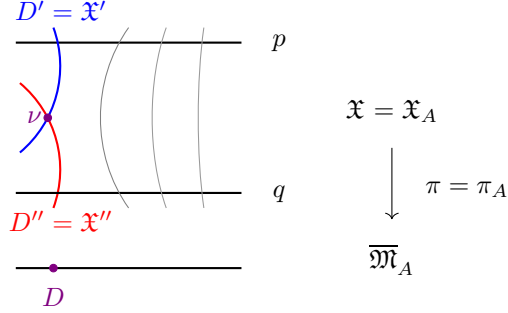
\begin{figure}
    \centering
    \begin{tikzcd}
        \draw[thick] (0,1) -- (3,1) ;
        \draw[blue,thick] (0.5,1.2) arc (20:-50:1.5);
        \draw[red,thick] (0.5,-1.2) arc (-20:50:1.5);
        \draw[thick] (0,-1) -- (3,-1);
        \draw[thick] (0,-2) -- (3,-2);
        \draw[fill,blue!50!red] (0.425,0) circle (0.05);
        \draw[blue!50!red] (0.25,-0.075) node {\nu};
        \draw[fill,blue!50!red] (0.5,-2) circle (0.05);
        \draw[blue!50!red] (0.5,-2.5) node {D};
        \draw[blue] (0.6,1.3) node {D'=\Crv{}'};
        \draw[red] (0.6,-1.5) node {D''=\Crv{}''};
        \draw (3.5,0.95) node {p};
        \draw (3.5,-1.05) node {q};
        \draw (5,0) node {\Crv{}=\Crv{A}};
        \draw[->] (5,-0.4) -- (5,-1.35);
        \draw (6,-1) node {\pi = \pi_A};
        \draw (5,-2) node {\bM{A}};
        \draw[gray] (1.5,1.2) .. controls (1,0.5) and (1,-0.5) .. (1.5,-1.2);
        \draw[black!40!white] (2,1.2) .. controls (1.75,0.5) and (1.75,-0.5) .. (2,-1.2);
        \draw[black!40!white] (2.5,1.2) .. controls (2.4,0.5) and (2.4,-0.5) .. (2.5,-1.2);
    \end{tikzcd}
    \caption{Degeneration of a smooth  curve to a nodal curve with the marked points $p$ and $q$ separated by a node $\nu$, viewed inside the universal curve.  The marked points other than $p$ and $q$ are omitted from the diagram.}
    \label{fig:degeneration}
\end{figure}
 It is a standard fact that the bundle $\Tinf[\nu]{}'\otimes\Tinf[\nu]{}''$ parameterizing the gluing data at $\nu$ is canonically identified  with the normal bundle of $N = \cO{}(D)|_D$, by taking the ``double derivative of the projection at the node''.  More precisely, we have the following.
\begin{lemma}\label{lem:eta-expression}
    There is a unique nonvanishing section
    \[
\eta  = \eta_\nu \in \coH[0]{D, \cO{\bM{A}}(-D)|_D \otimes
\Tinf[\nu]{}' \otimes \Tinf[\nu]{}'' }
\]
with the following property: for every triple of local trivializations
\[
g \in \cO{\bM{A}}(-D) \qquad g' \in \cO{\Crv{}}(-D') \qquad g'' \in \cO{\Crv{}}(-D'')
\]
such that $\pi^* g = g'g''$, we have
\[
\eta =   [g] \otimes  
 \cvf{g''}|_\nu \otimes \cvf{g'}|_\nu 
\]
where $g'$ (respectively $g''$) is viewed as a coordinate on the fibres of $\Crv{}''$ (resp.~$\Crv{}'$) and $[g] \in \cO{\bM{A}}(-D)|_D$ is the restriction of $g$ as a section of $\cO{\bM{A}}(-D)$.
\end{lemma}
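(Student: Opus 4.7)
The plan is to prove existence and uniqueness by writing $\eta$ locally via the displayed formula and checking that it is independent of the chosen trivializations. First, I would verify that such triples $(g, g', g'')$ exist étale-locally near any point of $D$. This follows from the standard local model for a family degenerating to a nodal curve: étale-locally near a node, the universal curve has the form $\Spec{R[x,y]/(xy - t)}$, where $t \in R$ is a local equation for $D \subset \bM{A}$. Taking $g = t$, $g' = x$, $g'' = y$ gives $\pi^* g = g' g''$, and these trivialize the three relevant invertible sheaves, with $g''|_{\Crv{}'}$ and $g'|_{\Crv{}''}$ serving as coordinates vanishing at $\nu$ on the two components.

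Next I would check well-definedness of the formula. Suppose $(\tilde g, \tilde g', \tilde g'')$ is another such triple, with $\tilde g = u g$, $\tilde g' = u' g'$, $\tilde g'' = u'' g''$ for units $u, u', u''$. The compatibility $\pi^*\tilde g = \tilde g' \tilde g''$ is equivalent to $\pi^* u = u' u''$ in $\cOx{\Crv{}}$. Under these rescalings, $[\tilde g] = u|_D \cdot [g]$, and because $g'|_\nu = g''|_\nu = 0$, the Leibniz rule yields $\cvf{\tilde g''}|_\nu = (u''|_\nu)^{-1}\cvf{g''}|_\nu$ and $\cvf{\tilde g'}|_\nu = (u'|_\nu)^{-1}\cvf{g'}|_\nu$. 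Multiplying, the new expression equals the old one times the scalar $u|_D \cdot (u'|_\nu \, u''|_\nu)^{-1}$ along $D$.

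The main verification is that this scalar equals $1$ pointwise on $D$. For any $[C] \in D$, the node $\nu_C$ of the corresponding curve lies in the fibre $\pi^{-1}([C])$, so
\[
u'|_{\nu_C} \cdot u''|_{\nu_C} = (\pi^* u)|_{\nu_C} = u|_{[C]},
\]
so $u|_D / (u'|_\nu u''|_\nu) \equiv 1$. This is the key computation and the only real content in the proof; everything else is formal. Once this is established, the local expressions patch to a global section of $\cO{\bM{A}}(-D)|_D \otimes \Tinf[\nu]{}' \otimes \Tinf[\nu]{}''$. Non-vanishing is immediate from the formula, since each tensor factor is trivialized locally by the chosen data, and uniqueness is clear because the value on any local trivialization is prescribed.

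The only potential subtlety is bookkeeping: ensuring that $\cO{\bM{A}}(-D)|_D$ is interpreted correctly (as a sheaf on $D$ rather than on $\Crv{}$), and that $\cvf{g''}|_\nu$ is understood as a tangent vector along the fibre $\Crv{}'$ (not along the total space). Both are resolved by noting that $g''$ restricts to a function on $\Crv{}'$ vanishing simply at $\nu$, and that the identity $\pi^*g = g' g''$ holds scheme-theoretically, so its restrictions to $D$ and to $\nu \in \Crv{}'$ make sense as asserted.
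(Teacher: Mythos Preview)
The paper does not actually supply a proof of this lemma; it is introduced as making precise a ``standard fact'' (the identification of the gluing space with the normal bundle via the ``double derivative of the projection at the node''), and the text passes directly to a remark and the next proposition. Your argument correctly fills in the omitted verification: you exhibit the local model $xy=t$ to get triples $(g,g',g'')$, compute how the expression transforms under rescalings by units $(u,u',u'')$ with $\pi^*u = u'u''$, and use $\pi(\nu_C)=[C]$ to see that the resulting scalar $u|_D\cdot(u'|_\nu u''|_\nu)^{-1}$ is identically $1$. This is exactly the natural proof and is consistent with the paper's description of $\eta$ as coming from the double derivative at the node.
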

\begin{remark}
    An important subtlety of the notation is that $g'$ is a coordinate on $\Crv{}''$, not $\Crv{}'$.  Indeed, by definition, $g'$ vanishes  transversally on all of $\Crv{}'$; it therefore gives a fibrewise coordinate on $\Crv{}''$ that vanishes at the node $\nu$.
\end{remark}

Meanwhile, we have three relevant universal transport maps, namely
\begin{align*}
s &:= s^A_{p,q} & s' &:= s^{A'\sqcup\{\nu\}}_{p,\nu} & s'' &:= s^{A''}_{\nu,q}
\end{align*}
which transport basepoints on $\Crv{}$, $\Crv{}'$ and $\Crv{}''$, respectively.  They are related by the universal gluing datum $\eta$, in the following sense.
\begin{proposition}\label{prop:s-factorization}
    We have the identity
    \[
    s|_D = s'\eta s''
    \]
    as sections of the line bundle $\left.\rbrac{T_p^\vee\Crv{}\otimes T_q^\vee\Crv{}\otimes\cO{\bM{A}}(-D)}\right|_D$
\end{proposition}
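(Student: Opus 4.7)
The claim is local on $\bM{A}$ along $D$, so the plan is to verify it in an \'etale neighbourhood of a generic point $[C_0]\in D$. Adopt the local coordinates $(x,y,\ldots)$ on $\Crv{}$ near the node section described in \autoref{lem:eta-expression}, so that $D' = \{x=0\}$, $D'' = \{y=0\}$, and $\pi^* u = xy$ for a local equation $u$ of $D$ in $\bM{A}$. With this choice, \autoref{lem:eta-expression} gives
\[
\eta = [u]\otimes\cvf{y}|_{\nu}\otimes\cvf{x}|_{\nu}.
\]

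The main device is to use a single local trivialization $\varphi$ of $\pi_{A*}\cO{\Crv{}}(q-p)$ to produce compatible trivializations on both components. Since $p$ and $q$ lie on distinct components of the nodal fibre, the argument in the proof of \autoref{lem:s-tensor}(3) shows $\varphi|_{C'}\equiv 0$, which globalises to the vanishing of $\varphi$ along the divisor $D'$. Hence $\varphi = x\hat\varphi$ near the node with $\hat\varphi$ regular, and $\hat\varphi(\nu)\neq 0$, since $\varphi|_{C''}$ has a simple zero at $\nu$. Set
\[
f' := (\varphi/u)|_{\Crv{}'} \quad\textrm{and}\quad f'' := \varphi|_{\Crv{}''}.
\]
Since $\varphi$ vanishes along $D'$ with multiplicity one, a divisor count shows $f'$ is a nonvanishing local section of $\cO{\Crv{}'}(\nu-p)$; likewise $f''$ is a nonvanishing local section of $\cO{\Crv{}''}(q-\nu)$, its zero at $\nu$ arising from the matching condition $\varphi|_{C'}=0$ at the node. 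These are suitable for computing $s'$ and $s''$ via the recipe of \autoref{lem:s-tensor}.

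The rest is a direct local calculation. Near $\nu$: the factorisation $\varphi = x\hat\varphi$ gives $df''|_\nu = \hat\varphi(\nu)\, dx|_\nu$; rewriting $\varphi/u = \hat\varphi/y$ (via $\pi^*u = xy$) and restricting to $\{x=0\}$ gives $1/f' = y/\hat\varphi|_{x=0}$, hence $d(1/f')|_\nu = \hat\varphi(\nu)^{-1}\, dy|_\nu$. Near $p$: picking a fibre coordinate $z_p$ with $p = \{z_p=0\}$, the vanishing of $\varphi$ along both $D'$ and the $p$-section locally forces $\varphi = z_p u \tilde\varphi_p$ for some $\tilde\varphi_p$ regular and nonvanishing at $p$, whence
\[
d\varphi|_p = u\tilde\varphi_p(p)\, dz_p|_p \quad\textrm{and}\quad df'|_p = \tilde\varphi_p(p)|_{u=0}\, dz_p|_p.
\]
The analogous calculation near $q$ yields matching factors for $d(1/\varphi)|_q$ and $d(1/f'')|_q$. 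Contracting $s'\otimes\eta\otimes s''$ via the pairings $dy|_\nu\cdot\cvf{y}|_\nu = dx|_\nu\cdot \cvf{x}|_\nu = 1$, the factors $\hat\varphi(\nu)^{\pm 1}$ at the node cancel, the factor $u$ in $s$ is absorbed into $[u]$ in $\eta$, and what remains matches $s|_D$ term-by-term.

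The main obstacle is conceptual bookkeeping: at the node section, the conormal bundles $\cO{\Crv{}}(-D')|_{\Crv{}'}$ and $\cO{\Crv{}}(-D'')|_{\Crv{}''}$ are canonically identified with $T^\vee_\nu\Crv{}''$ and $T^\vee_\nu\Crv{}'$ respectively, i.e.\ with the cotangent spaces of the \emph{other} component. The role of $\eta$ is precisely to reconcile this ``swap'', and this is the structural reason for the cancellation $\hat\varphi(\nu)\cdot\hat\varphi(\nu)^{-1} = 1$ that makes the calculation work.
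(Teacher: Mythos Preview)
Your proof is correct and follows essentially the same approach as the paper: both arguments define $f'=(\varphi/u)|_{\Crv{}'}$ and $f''=\varphi|_{\Crv{}''}$ from a single trivialization of $\pi_{A*}\cO{\Crv{}}(q-p)$, factor $\varphi = x\hat\varphi$ near the node (the paper writes $f=hg'$), and verify the identity by computing the contractions. The only cosmetic difference is at the marked points: you introduce auxiliary fibre coordinates $z_p,z_q$ and factor $\varphi=z_p u\,\tilde\varphi_p$, whereas the paper observes directly that $\dd f'|_p = (1/\pi^*g|_p)\,\dd f|_p$ and uses $\pi^*g|_p=g$ to cancel the factor of $g$ against the one coming from $\eta$.
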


\begin{proof}
   Suppose that $f \in \coH[0]{\cO{\Crv{}}(q-p)}$, so that 
   \begin{align*}
   s &= \dd f|_p \otimes \dd(1/f)|_q \in T_p^\vee\Crv{}\otimes T_q^\vee\Crv{} \cong T_p^\vee\Crv{}' \otimes T_q^\vee\Crv{}''
    \end{align*}
    and suppose that  $g, g', g''$ are as in \autoref{lem:eta-expression}, so that
    \begin{align*}
   \eta &= [g] \otimes \cvf{g''}|_\nu \otimes \cvf{g'}|_\nu \in \cO{}(-D)|_D\otimes \Tinf[\nu]{}' \otimes \Tinf[\nu]{}''.
   \end{align*}
   The divisors of these functions (on their domains of definition) are as follows:
   \begin{align*}
      \divf(f) &= p-q+D' \\
      \divf(\pi^*g) &= \pi^* D = D'+D'' \\
      \divf(g') &= D' \\
      \divf(g'') &= D''.
   \end{align*}
   Thus, if we define functions on $\Crv{}'$ and $\Crv{}''$ by 
   \[
   f' := (f/\pi^* g)|_{\Crv{}'} \qquad f'' := f|_{\Crv{}''}
   \]
   then
   \[
   f' \in \pi'_*\cO{\Crv{}'}(\nu-p)\qquad \textrm{and} \qquad f'' \in \pi''_*\cO{\Crv{}''}(q-\nu)
   \]
   where $\pi' \colon \Crv{}'\to D$ and $\pi''\colon \Crv{}''\to D$ are the projections. We therefore have
   \begin{align*}
   s' &= \dd f'|_p \otimes \dd(\tfrac{1}{f'})|_\nu \in T_p^\vee\Crv{}' \otimes T_\nu^\vee\Crv{}'
   \end{align*}
   and 
   \begin{align*}
   s'' &= \dd f''|_\nu \otimes \dd(\tfrac{1}{f''})|_q \in T_\nu^\vee\Crv{}'' \otimes T_q^\vee\Crv{}''.
   \end{align*}
   The composition $s'\eta s''$ is obtained by combining the corresponding tensors via the contractions on the (co)tangent spaces at $\nu$.  Thus we have
   \[
   s'\eta s'' = g \otimes  (\partial_{g''}\tfrac{1}{f'})|_\nu \cdot (\partial_{g'}f'')|_\nu \cdot \dd f'|_p \otimes \dd f''|_q \in \cO{\bM{A}}(-D)|_D \otimes  T_p^\vee\Crv{}'\otimes T_q^\vee\Crv{}'
   \]
   We will prove that this expression is equal to $s|_D$ by a suitable coordinate change, as follows.
   
   Note that since $\Crv{A}$ is locally factorial and $f$ vanishes on $D'$, there is a unique regular function $h$ on $\Crv{A}$ such that
   \[
   f = hg'.
   \]
   on the domain of $g'$.  Then the divisor of $h$ is 
   \[
   \divf(h) = \divf(f)-\divf(g') = q-p
   \]
   so that $h$ and $h^{-1}$ are regular and nonvanishing at $\nu$.  Moreover, since $\pi^* g = g'g''$, the definitions of $f'$ and $f''$ give
   \begin{align*}
   f' &= \tfrac{h}{g''}|_{D'} & f'' &= hg'|_{D''}.
   \end{align*}
   Therefore we have
   \begin{align*}
   \cvf{g''}(\tfrac{1}{f'})|_\nu = \left.\rbrac{\tfrac{1}{h}+\tfrac{g''}{h^2}\cvf{g''}h}\right|_\nu = \tfrac{1}{h}|_\nu
   \end{align*}
   since $g''|_\nu=0$.  Similarly, we have
   \[
   \cvf{g'}(f'')|_\nu = (h + g'\cvf{g'}h)|_\nu = h|_\nu.
   \]
   Therefore we have
   \begin{align*}
   s'\eta s'' &= g \otimes   \tfrac{1}{h} \cdot h \cdot \dd f'|_p \otimes \dd(\tfrac{1}{f''})|_q \\
   &= g \otimes \dd f'|_p \otimes \dd f''|_q \\
   &= g \otimes \dd(\tfrac{f}{\pi^*{g}})|_p \otimes \dd(\tfrac{1}{f})|_q \\
   &= [g] \otimes \tfrac{1}{\pi^*g|_p} \dd f|_p \otimes \dd(\tfrac{1}{f})|_q \\
   &= \left.\rbrac{\frac{g}{\pi^*g|_p}\dd f|_p \otimes \dd(\tfrac{1}{f})|_q}\right|_D \\
   &= \left.\rbrac{\dd f|_p \otimes \dd(\tfrac{1}{f})|_q}\right|_D \\
   &= s|_D
   \end{align*}
   as desired, where for the fourth equality we have used that $d$ is the fibrewise differential, so that $\dd(\pi^*g)=0$, and for the sixth equality, we have used that $p$ is a section, so that $\pi^* g|_p = g$.
\end{proof}

\subsection{Operads of curves with tangential basepoints}\label{sec:operads-moduli-spaces}
We are now in a position to formulate our construction of a logarithmic lift of the little disks operad.

\begin{definition}
    For a finite set $A$, the \defn{logarithmic Fulton--MacPherson space} is the Deligne--Faltings log scheme
    \[
    \FM{A} := (\bM{A},\bbM{A},\Tinf{A})
    \]
    and the \defn{framed logarithmic Fulton--MacPherson space} is the Deligne--Faltings log scheme
    \[
    \FMfr{A} := \rbrac{\bM{A},\bbM{A},\bigoplus_{p \in A \sqcup\{\infty\}} \Tinf[p]{A}}
    \]
\end{definition}
Note that these log schemes are strata of normal crossing divisors, and hence their virtual points are tangential basepoints of $(\bM{A},\bbM{A})$ together with nonzero elements in the corresponding line bundles.  Since the normal bundles of the components of $\bM{A}$ parameterize gluing data by \autoref{lem:eta-expression}, it follows immediately that these log schemes have the following modular interpretation:

\begin{proposition}
    If $\KK$ is any field, then the virtual $\KK$-points $\FM{A}(\KK)$ are in natural bijection with isomorphism classes of stable $(A \sqcup\{\infty\})$-marked curves of genus zero over $\KK$, equipped with a gluing datum at every node, and a tangential basepoint at $\infty$.  Virtual points of $\FMfr{A}(\KK)$ are the same, but with the additional datum of a tangential basepoint at every marked point $a \in A$.
\end{proposition}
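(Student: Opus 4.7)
My plan is to first apply \autoref{lem:pullback} to reduce the statement to analysing virtual morphisms from $\Spec{\KK}$ into the pullback log structure $\FM{A}|_c$ at an arbitrary $\KK$-point $c \in \bM{A}(\KK)$. By the classical modular interpretation of $\bM{A}$, the point $c$ is an isomorphism class of stable $(A \sqcup \{\infty\})$-marked genus zero curve $C/\KK$, and since every irreducible component of $C$ carries at least three special points, $\mathrm{Aut}(C)$ is trivial. Consequently, isomorphism classes of tuples (curve, extra data) correspond bijectively to $\KK$-points $c \in \bM{A}(\KK)$ equipped with extra data on any (equivalently, every) representative $C$.

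Next I would describe the pullback Deligne--Faltings datum at $c$ explicitly. Since $(\bM{A}, \bbM{A})$ is a strict normal crossing divisor whose local branches at $c$ are in canonical bijection with the nodes of $C$, \autoref{ex:pullback-log-structure-to-divisor} identifies the pullback of the divisorial factor with the DF datum on a point whose split bundle is the sum of the normal lines at $c$, one per node. Meanwhile, the DF bundle $\Tinf[\infty]{A}$, respectively $\bigoplus_p \Tinf[p]{A}$, pulls back to the tangent line $T_\infty C$, respectively $\bigoplus_{p \in A \sqcup \{\infty\}} T_p C$. Invoking \autoref{prop:morphisms-as-monomial-maps}, a virtual morphism $\Spec{\KK} \to \FM{A}|_c$ then becomes a virtual monomial map of a point into this split bundle---which, since the source is a point and no divisor is present, amounts to nothing but a nonzero element in each line summand.

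The main obstacle is the canonical identification of each normal line at $c$ with the tensor product $T_{\nu'} C \otimes T_{\nu''} C$ of the tangent lines at the two branches of the corresponding node $\nu$, for only with this identification does ``a nonzero element of the normal line'' translate into ``a gluing datum at the node''. I would extract this from \autoref{lem:eta-expression}: although the lemma is phrased globally over a boundary divisor $D = D_{A',A''}$ in the universal curve, the section $\eta_\nu$ it constructs is a nowhere vanishing trivialisation of $\cO{}(-D)|_D \otimes \Tinf[\nu]{}' \otimes \Tinf[\nu]{}''$ on all of $D$, so fibrewise restriction over $c$ yields the required canonical isomorphism between the conormal line of $D$ at $c$ and $T_{\nu'}^\vee C \otimes T_{\nu''}^\vee C$. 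Combining the three steps yields the claimed bijection for $\FM{A}$; the framed case $\FMfr{A}$ follows from the same argument, with the enlarged DF bundle contributing the extra tangential basepoints at all marked points.
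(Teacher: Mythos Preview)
Your proposal is correct and follows essentially the same approach as the paper, which in fact offers only a one-sentence justification before stating the proposition: virtual points of a Deligne--Faltings log scheme over a strict normal crossing pair are tangential basepoints for the pair together with nonzero elements of the line bundles, and the normal lines of the boundary components of $\bbM{A}$ are identified with spaces of gluing data via \autoref{lem:eta-expression}. Your write-up simply unpacks these two assertions in greater detail, invoking \autoref{lem:pullback}, \autoref{ex:pullback-log-structure-to-divisor}, and \autoref{prop:morphisms-as-monomial-maps} to spell out why a virtual point over $c\in\bM{A}(\KK)$ amounts to a nonzero element in each summand of the pulled-back split bundle.
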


The framed versions were defined previously by Vaintrob~\cite{Vaintrob2021}, who denoted them by $FLC$, and  explained how to lift the gluing maps for stable curves to \emph{ordinary} morphisms between the log schemes $\FMfr{A}$, making them into an operad in $\fsLogSch$, which we recall below.  We shall do something similar for the unframed versions $\FM{A}$, but now the gluing maps will be \emph{virtual}.

\subsubsection{Composition of stable curves}
Given finite sets $A$ and $B$, and an additional symbol $\nu$, we have the gluing map of schemes
\begin{equation}\label{eq:operadic-composition-level-of-schemes}
\circ_\nu \colon \bM{A\sqcup\{\nu\}} \times \bM{B} \to \bM{A\sqcup B}
\end{equation}
where we attach the marked point $\nu$ on the first curve to the marked point $\infty$ on the second as above. It is the closed embedding of the component $D_{A,B}$ of $\bbM{A\sqcup B}$.

\subsubsection{Framed composition}

Following Vaintrob \cite{Vaintrob2021} we explain how to upgrade \eqref{eq:operadic-composition-level-of-schemes} to an ordinary morphism
\[
\circ_\nu \colon \FMfr{A\sqcup\{\nu\}} \times \FMfr{B} \to \FMfr{A\sqcup B}.
\]
This amounts to defining an ordinary morphism between two Deligne--Faltings log structures over the normal crossing divisor $(\bM{A\sqcup\{\nu\}},\bbM{A\sqcup\{\nu\}}) \times (\bM{B},\bbM{B})$: the product structure $\FMfr{A\sqcup\{\nu\}} \times \FMfr{B}$ and the pullback log structure $\circ_\nu^* \FMfr{A\sqcup B}$.  As explained in \autoref{ex:pullback-log-structure-to-divisor}, the latter is induced by the pullbacks of the line bundles defining $\FMfr{A\sqcup B}$, along with the normal bundle $N$ of the embedding \eqref{eq:operadic-composition-level-of-schemes}.
By \autoref{prop:morphisms-as-monomial-maps} we are therefore looking for an ordinary monomial map
\begin{equation*}
\left(\bigoplus_{x\in A\sqcup\{\nu\}\sqcup\{\infty\}}T_x\Crv{A\sqcup\{\nu\}}\right)  \oplus \left(\bigoplus_{y\in B\sqcup\{\infty\}}T_y\Crv{B}\right)  \to N\oplus \left(\bigoplus_{z\in A\sqcup B\sqcup \{\infty\}} (\circ_\nu)^* T_z\Crv{A\sqcup B}\right)
\end{equation*}
between split vector bundles.
Here we abuse notation and denote by the same symbol a line bundle on a factor of a product and its pullback to the product. Taking into account the isomorphisms $(\circ_\nu)^*T_x\Crv{A\sqcup B}\cong T_x\Crv{A\sqcup\{\nu\}}$ for $x\in A\sqcup\{\infty\}$ and $(\circ_\nu)^*T_y\Crv{A\sqcup B}\cong T_y\Crv{B}$ for $y\in B$, the sought-for ordinary monomial map looks like:
\begin{equation*}
\begin{array}{c}
\Tinf[\nu]{A\sqcup\{\nu\}}  \oplus T_\infty\Crv{B} \\ 
\oplus \\
    \bigoplus_{x\in A \sqcup\{\infty\}}\Tinf[x]{A\sqcup\{\nu\}} \\
    \oplus \\
    \bigoplus_{y\in B}T_y\Crv{B}
\end{array} \to \begin{array}{c}
    N \\
    \oplus \\
    \bigoplus_{x\in A\sqcup\{\infty\}}\Tinf[x]{A\sqcup \{\nu\}} \\
    \oplus \\
    \bigoplus_{y\in B}T_y\Crv{B} 
\end{array}
\end{equation*}
We define it by combining the obvious identity maps in the bottom two rows with the quadratic monomial map
\[
T_\nu\Crv{A\sqcup\{\nu\}} \oplus T_\infty\Crv{B} \to T_\nu\Crv{A\sqcup\{\nu\}}\otimes T_\infty\Crv{B} \stackrel{\eta_\nu}{\cong} N
\]
induced by the universal gluing datum $\eta_\nu$. The naturality of $\eta$, and the fact that it only depends on the local behaviour at each node, ensures that these morphisms satisfy the associativity conditions required to give the collection of log schemes $\FM{}$ the structure of an operad in the category $\fsLogSch$ of fs log schemes and ordinary morphisms.

\begin{remark}
The above construction can be performed for moduli spaces of curves of any genus, as already explained in a different language by Levine \cite[\S 11]{LevineTubular}, see also \cite[\S 8]{BDPW}. In contrast, the \emph{unframed} composition that we will now define crucially uses the fact that we are working with genus zero curves, for which tangential basepoints can be transported.
\end{remark}

\subsubsection{Unframed composition}

The gluing map of Vaintrob cannot be applied to the unframed versions $\FM{A}$, since the line bundle $\Tinf[\nu]{A\sqcup\{\nu\}}$ of tangent vectors at the gluing point is not part of the log structure.  To deal with this, we use the universal transport of tangential basepoints, which gives a \emph{virtual} isomorphism
\[
s^{A\sqcup\{\nu\}}_{\infty,\nu} \colon \Tinf{A\sqcup\{\nu\}} \to \Tinf[\nu]{A\sqcup\{\nu\}}
\]
over the pair $(\bM{A\sqcup\{\nu\}},\bbM{A\sqcup\{\nu\}})$. We now upgrade \eqref{eq:operadic-composition-level-of-schemes} to a \emph{virtual} morphism
\[
\circ_\nu \colon \FM{A\sqcup\{\nu\}} \times \FM{B} \to \FM{A\sqcup B}.
\]
Similarly to the framed case, this amounts by \autoref{prop:morphisms-as-monomial-maps} to a \emph{virtual} monomial map
\begin{equation*}\label{eq:operadic-composition-virtual-monomial-map}
T_\infty\Crv{A\sqcup\{\nu\}} \oplus T_\infty\Crv{B} \to  T_\infty\Crv{A\sqcup\{\nu\}} \oplus N
\end{equation*}
between split vector bundles on the pair $(\bM{A\sqcup\{\nu\}},\bbM{A\sqcup\{\nu\}})\times (\bM{B},\bbM{B})$. It is obtained from combining the identity map of $T_\infty\Crv{A\sqcup\{\nu\}}$ and the composite
\[
\begin{tikzcd}[column sep = 2em]
    \Tinf{A\sqcup\{\nu\}} \oplus \Tinf{B} \ar[rr,"s^{A\sqcup\{\nu\}}_{\infty,\nu}\oplus \id{}"] && \Tinf[\nu]{A\sqcup\{\nu\}}\oplus \Tinf{B} 
    \stackrel{\eta_\nu}{\cong} N.
\end{tikzcd}
\]
The following statement is readily seen to follow from the factorization property of the universal transport maps proven in \autoref{prop:s-factorization}:
\begin{proposition}
    The morphisms $\circ_\nu$ are associative, making $\FM{}$ into an operad in the category $\fsWLogSch$.
\end{proposition}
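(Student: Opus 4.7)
The plan is to verify both parallel and sequential operadic axioms for the virtual morphisms $\circ_\nu$. Since the underlying scheme maps $\bM{A\sqcup\{\nu\}}\times\bM{B}\to\bM{A\sqcup B}$ are the standard gluing operations on moduli of genus-zero stable curves---classically known to form an operad---what remains is to check that the corresponding virtual monomial maps between the relevant Deligne--Faltings data agree on each side of each associativity diagram. By \autoref{prop:morphisms-as-monomial-maps}, this reduces to a comparison of monomial maps between split vector bundles built out of the basepoint lines $T_\infty \Crv{}$ and the normal bundles of the various boundary divisors (which appear in the pullback log structure along the gluing embeddings, per \autoref{ex:pullback-log-structure-to-divisor} and \autoref{lem:eta-expression}).

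For the parallel axiom, two independent nodes $\nu_1$ and $\nu_2$ attach subcurves to a common root curve. The output basepoint at $\infty$ is $v_x$ in both orderings, while the gluing data at $\nu_1$ and $\nu_2$ are constructed using the transports $s^{A\sqcup\{\nu_1,\nu_2\}}_{\infty,\nu_i}(v_x)$ combined via $\eta_{\nu_i}$ with the basepoints of the respective attached curves. These operations act on disjoint tensor factors of the target split bundle, so this axiom will reduce to a routine diagram chase from the definitions and the naturality of the universal transport maps.

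The substantive case is sequential associativity, $(x\circ_\nu y)\circ_\mu z = x\circ_\nu(y\circ_\mu z)$ with $\mu\in B$. Both paths produce the basepoint $v_x$ at $\infty$ of the triple-glued curve and the same gluing datum at $\nu$ (built from $v_x$ and $v_y$ via $s^{A\sqcup\{\nu\}}_{\infty,\nu}$ and $\eta_\nu$), so the heart of the matter is comparing the resulting gluing data at the ``outer'' node $\mu$. In the left association, one must transport $v_x$ from $\infty$ to $\mu$ along the reducible curve $C_x\cup_\nu C_y$ sitting in the boundary divisor $D_{A,B\sqcup\{\mu\}}$; in the right association, one instead transports $v_y$ from $\infty$ to $\mu$ along the smooth curve $C_y$ and then glues the result into $C_x$. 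The main obstacle---and the technical heart of the proof---is precisely \autoref{prop:s-factorization}: the identity $s|_D = s'\eta s''$ expresses the restriction of $s^{A\sqcup B\sqcup\{\mu\}}_{\infty,\mu}$ to the boundary as the tensor contraction of the transport $s^{A\sqcup\{\nu\}}_{\infty,\nu}$ on $C_x$, the universal gluing section $\eta_\nu$, and the transport $s^{B\sqcup\{\mu\}}_{\infty,\mu}$ on $C_y$. Substituting the specific vectors $v_x,v_y$ and remembering that the node-$\nu$ gluing datum of $(x\circ_\nu y)$ is itself produced by applying $\eta_\nu$ to $s^{A\sqcup\{\nu\}}_{\infty,\nu}(v_x)\otimes v_y$, the two paths produce identical elements of the normal bundle $N_\mu$. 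This will establish associativity and complete the operad structure on $\FM{}$ in $\fsWLogSch$.
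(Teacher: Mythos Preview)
Your proposal is correct and takes essentially the same approach as the paper, which simply asserts that the statement ``is readily seen to follow from the factorization property of the universal transport maps proven in \autoref{prop:s-factorization}.'' You have unpacked this claim in more detail than the paper does, correctly isolating the sequential associativity at the outer node $\mu$ as the substantive case and identifying $s|_D = s'\eta s''$ as precisely the identity needed there; the parallel case indeed reduces to the compatibility of the transport maps with restriction to boundary divisors when both endpoints lie on the same side of the node, which is the naturality you invoke.
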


\begin{remark}\label{rem:FM-as-free-set-operad}
The virtual monomial map \eqref{eq:operadic-composition-virtual-monomial-map} is a virtual isomorphism, which implies that the operadic composition induces a virtual isomorphism of log schemes
\[
\FM{A\sqcup\{\nu\}} \times \FM{B} \stackrel{\sim}{\to} \FM{A\sqcup B}|_{D_{A,B}} \hookrightarrow \FM{A\sqcup B}.
\]
lifting the inclusion $\bM{A\sqcup \{\nu\}}\times\bM{B} \stackrel{\sim}{\rightarrow} D_{A,B} \hookrightarrow \bM{A\sqcup B}$.
\end{remark}

Moreover, the maps
\[
s^A_{\infty,a} \colon \Tinf{A} \to \Tinf[a]{A}
\]
for $a \in A$, together with the identity map on $\Tinf{A}$, give a virtual morphism
\[
s^A \colon \FM{A} \to \FMfr{A}.
\]
At the level of virtual points, this takes a curve with gluing data and a tangential basepoint at $\infty$, and further decorates it with tangential basepoints at all other marked points, by transporting the basepoint at $\infty$ using the gluing data.  It is therefore compatible with the operadic compositions:
\begin{proposition}
    The universal transport maps $s^A_{\infty,a}$ for $a \in A$ induce a morphism  $\FM{} \to \FMfr{}$ of operads in $\fsWLogSch$.
\end{proposition}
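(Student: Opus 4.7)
The plan is to verify that, for every pair of finite sets $A$, $B$ and every gluing symbol $\nu$, the naturality square
\[
\begin{tikzcd}
\FM{A\sqcup\{\nu\}} \times \FM{B} \ar[r,"s^{A\sqcup\{\nu\}}\times s^B"] \ar[d,"\circ_\nu"'] & \FMfr{A\sqcup\{\nu\}} \times \FMfr{B} \ar[d,"\circ_\nu"] \\
\FM{A\sqcup B} \ar[r,"s^{A\sqcup B}"'] & \FMfr{A\sqcup B}
\end{tikzcd}
\]
commutes in $\fsWLogSch$; operadic associativity will then follow automatically, since all compositions in these genus-zero operads are generated by binary ones and the compatibility is a local statement at each node. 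By \autoref{prop:morphisms-as-monomial-maps}, the check reduces to comparing the induced virtual monomial maps on the split vector bundles underlying the Deligne--Faltings log structures, which I will handle summand by summand.

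On the summand $\Tinf{A\sqcup B}$ at $\infty$, both paths are the identity by construction. On the summand $(\circ_\nu)^*N_{D_{A,B}}$ giving the gluing datum at $\nu$, both paths produce $\eta_\nu\circ(s^{A\sqcup\{\nu\}}_{\infty,\nu}\otimes\id{\Tinf{B}})$: the lower path by definition of the unframed composition on $\FM{}$, and the upper path because the framed composition uses the tangent vector at $\nu$ on the first curve, which is by construction $s^{A\sqcup\{\nu\}}_{\infty,\nu}$ applied to the input at $\infty$.

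The main step is to treat the remaining summands $(\circ_\nu)^*\Tinf[z]{A\sqcup B}$ for $z\in A\sqcup B$. For $z=a\in A$, the points $\infty$ and $a$ lie on the same component of every curve in $D_{A,B}$; here a local trivialization $f$ of $\cO{\Crv{A\sqcup\{\nu\}}}(a-\infty)$ extends canonically to a trivialization of $\cO{\Crv{A\sqcup B}}(a-\infty)$ by the constant value $f(\nu)$ on $\Crv{}''$, from which one reads off that $s^{A\sqcup B}_{\infty,a}\vert_{D_{A,B}}=s^{A\sqcup\{\nu\}}_{\infty,a}$ (the same-side analogue of \autoref{prop:s-factorization}, with no $\eta$-twist). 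For $z=b\in B$, the points are separated by the node and the key input is \autoref{prop:s-factorization} applied to $(p,q)=(\infty,b)$ and $D=D_{A,B}$, yielding
\[
s^{A\sqcup B}_{\infty,b}\vert_{D_{A,B}} \;=\; s^{A\sqcup\{\nu\}}_{\infty,\nu}\cdot\eta_\nu\cdot s^B_{\infty,b},
\]
which exactly matches the composition produced by the upper-right path after unpacking the $\FMfr{}$-composition at $\nu$ and the definition of $s^B$.

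The main obstacle will be careful book-keeping of conormal twists: the universal gluing section $\eta_\nu$ is valued in a bundle twisted by $\cO{\bM{A\sqcup B}}(-D_{A,B})\vert_{D_{A,B}}$, and the factorization identity of \autoref{prop:s-factorization} comes with a matching twist; one must verify that these twists cancel consistently along both paths, exactly as they do in the proof of that proposition. Once the line-bundle matching is untangled, the verification is essentially tautological.
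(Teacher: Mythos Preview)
Your proposal is correct and follows the approach implicit in the paper. The paper itself does not give a detailed proof: it describes $s^A$ at the level of virtual points (transporting the basepoint at $\infty$ to each marking using the gluing data) and then simply asserts that this is ``therefore compatible with the operadic compositions.'' Your argument is a careful unpacking of that assertion via \autoref{prop:morphisms-as-monomial-maps}, checking the naturality square summand by summand. You correctly identify \autoref{prop:s-factorization} as the key input for the cross-node case $b\in B$, and the easier same-side statement $s^{A\sqcup B}_{\infty,a}|_{D_{A,B}}=s^{A\sqcup\{\nu\}}_{\infty,a}$ for $a\in A$ (which follows directly from the first part of the proof of \autoref{lem:s-tensor}). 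Your remark about the conormal twist is exactly right: the twist by $\cO{\bM{A\sqcup B}}(-D_{A,B})|_{D_{A,B}}$ in \autoref{prop:s-factorization} is absorbed into the normal-bundle summand $N$ appearing in the pullback log structure, which is why the factorization is stated as an equality of sections of that twisted bundle.
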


\subsection{Relation to configuration spaces and little disks}
We now give a more explicit description of the log schemes $\FM{A}$  in terms of configuration spaces, which explains their relation to the usual Fulton--MacPherson model for the little disks operad.   

\subsubsection{The operad of Kato--Nakayama spaces}

Note that since the functor $\KN{-}$ is symmetric monoidal, the spaces $\KN{\FM{}}$ form a topological operad. As for the Kato--Nakayama space of any Deligne--Faltings log scheme over a normal crossing divisor (\autoref{ex:DF-KN}), the points of $\KN{\FM{A}}$ are the quotient of the set of virtual $\CC$-points $\FM{A}(\CC)$ by real dilations in all the line bundles and normal bundles.  Concretely, this means that elements in $\KN{\FM{A}}$ are in bijection with isomorphism classes of the following data:
\begin{itemize}
    \item A stable $A$-marked curve $C$ of genus zero over $\CC$.
    \item A real ray in the tangent space $T_\infty C$
    \item A real ray in the gluing space $T_\nu C_\nu' \otimes T_\nu {C''_\nu}$ at every node $\nu \in C$
\end{itemize}
The operadic composition then descends from that of $\FM{A}$.  Namely, to compose an element $C_1 \in \KN{\FM{A\sqcup\{\nu\}}}$ with an element $C_2 \in \KN{\FM{B}}$, one glues the curves $C_1$ and $C_2$ at the points $\nu \in C_1$ and $\infty \in C_2$ to obtain a nodal curve $C$.  Then  one transports the tangent ray at $\infty$ on $C_1$ to a tangent ray at $\nu$ using $s_{\infty,\nu}$ and ``tensors'' it with the tangent ray at $\infty$ on $C_2$ to obtain a ray in the gluing space $T_\nu C_1 \otimes T_\infty C_2$ of $C$.

\begin{example}\label{ex:gluing-rays}
    For $j=1,2$, let $C_j=\PP^1(\CC)$ with coordinate $z_j$.  Then a tangent ray $\rho_j \subset T_\infty C_j$ has the form
    \[
    \rho_j = \RR_{>0} e^{\iu \theta_j}\cvf{1/z_j} \subset T_{\infty}C_j
    \]
    for some angle $\theta_j \in \RR/2\pi\ZZ$.   From the formula in \autoref{ex:transport-tangential-basepoints-easy} for the transport of tangential basepoints, we have
    \[
    s_{\infty,0}(\rho_1) = \RR_{>0} e^{-\iu \theta_1}\cvf{z_1} \subset T_0 C_1
    \]
    Hence if we glue $0 \in C_1$ to $\infty \in C_2$ and combine their tangent rays by the operadic composition above, the resulting gluing datum at the node is given by
    \[
    \eta = s_{\infty,0}(\rho_1)\otimes\rho_2 = \RR_{>0} e^{\iu(\theta_2-\theta_1)} \rbrac{\cvf{z_1}|_0 \otimes \cvf{1/z_2}|_\infty}.
    \]
    It can thus be thought of as a measure of the difference in angle between the tangent rays on $C_1$ and $C_2$.
\end{example}

\subsubsection{The topological Fulton--MacPherson operad}

Let us recall the construction of the topological Fulton--MacPherson operad $\FMtop{}$, which is discussed for instance in \cite{KontsevichOperadsMotives,LambrechtsVolic,Markl1999,Salvatore2001}.

Let us denote by 
\[
\Conf_A(\CC) := \set{(z_a)_{a \in A} \in \CC^{A} }{z_a \neq z_b \textrm{ if }a\neq b}
\]
the configuration space parameterizing distinct points in $\CC$ labelled by the elements of $A$, i.e.~the set of injections $A \hookrightarrow \CC$.  It carries a canonical action of the group $\RR_{>0} \ltimes \CC$ of translations and real dilations of $\CC$, and the quotient $\Conf_A(\CC)/(\RR_{>0}\ltimes \CC)$ has a canonical Fulton--MacPherson compactification
\[
\FMtop{A} := \overline{\Conf_A(\CC)/(\RR_{>0}\ltimes \CC)}
\]
which is a real manifold with corners, whose boundary strata  parameterize ways in which configurations of points can degenerate.  The points of $\FMtop{A}$ are given by the following data:
\begin{itemize}
    \item A rooted tree $\tau$ whose leaves are labelled by the elements of $A$
    \item For each internal vertex $v$ of $\tau$, an element $Z_v \in \Conf_{A(v)}(\CC)/(\RR_{>0}\ltimes \CC)$, where $A(v)$ denotes the set of edges that point away from the root at $v$.
\end{itemize}
These spaces assemble into an operad $\FMtop{}$ whose operadic composition is given simply by the grafting of trees (inserting the root of one into a leaf of another).

\subsubsection{The relation between them}
We now relate the topological operads $\KN{\FM{A}}$ and $\FMtop{A}$ discussed above.

Note that the configuration \emph{space} $\Conf_A(\CC)$ is the space of $\CC$-points (with the classical topology) of the configuration \emph{scheme}
\[
\Conf_A(\AF^1) := \set{(z_a)_{a\in \AF^1}}{z_a \neq z_b \textrm{ if } a \ne b}.
\]
The latter is a smooth scheme of finite type over $\Spec{\ZZ}$ and comes equipped with an action of the group scheme $\Gm\ltimes \Ga$ by translations and dilations.

Given a configuration $(z_a)_{a \in A}$ in $\AF^1$, we may form a smooth marked curve of genus zero by adding the point at infinity to obtain $C:=(\PP^1,(z_a)_{a \in A},\infty)\in\M{A}$.  We may further equip it with the tangent vector $\cvf{1/z}|_\infty \in T_\infty \PP^1$ where $z$ is the standard coordinate on $\AF^1$.  Evidently every smooth marked curve with a tangential basepoint at infinity is isomorphic to one obtained in this way.  Moreover, the only automorphisms of $\PP^1$ fixing a nonzero tangent vector at $\infty$ are the translations, so two such curves with a tangential basepoint are isomorphic if and only if the corresponding configurations differ by a translation.  Thus the quotient of $\Conf_A(\AF^1)$ by translations is identified with the moduli space of smooth $A$-marked curves with a nonzero tangent vector at $\infty$, i.e.~we have a canonical isomorphism
\begin{equation}
\begin{tikzcd}
\Conf_A(\AF^1)/\Ga \ar[r,"\sim"] &\Tinfx{A}|_{\M{A}}
\end{tikzcd} \label{eq:conf-to-T}
\end{equation}
of schemes over $\Spec{\ZZ}$.

In addition, we have the canonical virtual map $\Tinfx{A}|_{\M{A}}  \to \FM{A}|_{\M{A}}$ which trades the fibre coordinate on $\Tinfx{A}|_{\M{A}}$ for the corresponding phantom coordinate on $\FM{A}$, and is thus a bijection on the level of virtual points.  On the level of Kato--Nakayama spaces, it gives the map
\begin{equation}
\begin{tikzcd}
    (\Tinfx{A}|_{\M{A}})(\CC) \ar[r] &\KN{\FM{A}|_{\M{A}}}
\end{tikzcd}\label{eq:T-to-FM}
\end{equation}
that sends a pair $(C,v)$ of a smooth complex curve $C \in \M{A}(\CC)$ and a nonzero tangent vector $v \in (T_\infty C)^\times$ to the pair $(C,\RR_{>0}v)$ given by the same curve and the tangent ray $\RR_{>0}v$ spanned by $v$.  It is thus the quotient by the canonical action of $\RR_{>0}$ by rescaling the tangent vector.  The latter corresponds to the action of $\RR_{>0}$ on $\Conf_A(\CC)$ by dilation of configurations, and hence the maps \eqref{eq:conf-to-T} and \eqref{eq:T-to-FM} combine to induce a homeomorphism
\begin{equation}
\begin{tikzcd}
\Conf_A(\CC)/(\RR_{>0}\ltimes \CC) \ar[r,"\sim"] & \KN{\FM{A}|_{\M{A}}}.
\end{tikzcd} \label{eq:conf-to-FM}
\end{equation}

\begin{theorem}\label{prop:conf-vs-fm}
    The map \eqref{eq:conf-to-FM} extends uniquely to the Fulton--MacPherson space $FM_A =\overline{\Conf_A(\CC)/(\RR_{>0}\ltimes \CC)}$, giving an isomorphism of topological operads
    \[
    \FMtop{} \cong \KN{\FM{}}.
    \]
\end{theorem}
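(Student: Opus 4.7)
The uniqueness of the extension is immediate, since $\Conf_A(\CC)/(\RR_{>0}\ltimes\CC)$ is dense in $\FMtop{A}$ and $\KN{\FM{A}}$ is Hausdorff. For existence, the plan is to build the extension stratum by stratum using the operadic/tree descriptions of both sides, then check continuity.

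Both $\FMtop{A}$ and $\KN{\FM{A}}$ admit natural stratifications indexed by rooted trees with leaves labelled by $A$: on the $\FMtop{}$ side, a point is a tree $\tau$ together with an element $Z_v\in\Conf_{A(v)}(\CC)/(\RR_{>0}\ltimes\CC)$ at each internal vertex, while on the $\KN{\FM{}}$ side, the dual graph of a stable marked curve plays the role of $\tau$ and the stratification of the boundary normal crossing divisor $\bbM{A}$ assigns to each vertex $v$ a smooth component $C_v$ equipped with a tangential ray at its distinguished point. By \autoref{rem:FM-as-free-set-operad}, each such stratum of $\KN{\FM{A}}$ is canonically identified (via iterated operadic compositions $\circ_\nu$) with a product $\prod_v \KN{\FM{A(v)}|_{\M{A(v)}}}$. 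The smooth comparison \eqref{eq:conf-to-FM} applied factor-wise thus yields a canonical bijection between each stratum of $\FMtop{A}$ and the corresponding stratum of $\KN{\FM{A}}$. Assembling these over all trees defines the sought-for map of sets
\[
\Phi_A\colon \FMtop{A}\to \KN{\FM{A}},
\]
extending \eqref{eq:conf-to-FM}.

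The operadic compatibility will follow immediately from this definition: grafting of trees corresponds, on the $\FMtop{}$ side, to concatenation of the corresponding decorated trees, and on the $\KN{\FM{}}$ side to the boundary inclusion induced by $\circ_\nu$ via \autoref{rem:FM-as-free-set-operad}. The key observation that makes this match work is \autoref{ex:gluing-rays}: when two tangent rays at $\infty$ on two irreducible components are combined via the universal transport $s_{\infty,\nu}$ and the pairing $\eta_\nu$, the resulting gluing ray at the node records exactly the relative angle $\theta_2-\theta_1$ of the two rays. This is precisely the same datum that the FM compactification records when a configuration bubbles off: the ``internal'' configuration on the bubble is defined only up to $\RR_{>0}\ltimes\CC$, and what survives at the interface with the ``external'' configuration is the relative rotation between the two frames.

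The main obstacle, and the step requiring the most care, will be the continuity of $\Phi_A$. Since both $\FMtop{A}$ and $\KN{\FM{A}}$ are compact Hausdorff (the former by construction, the latter because $\bM{A}$ is proper and the real oriented blowup of a compact space along a normal crossing divisor is compact), a continuous bijection is automatically a homeomorphism; so it suffices to check continuity. I would do this by working in local charts near a boundary point. On the $\FMtop{}$ side, a neighbourhood of a tree stratum is parameterised by a smooth configuration together with small positive ``distance'' coordinates $r_v>0$ controlling the relative scale of each bubble, with $r_v\to 0$ giving the degenerate limit. On the $\KN{\FM{}}$ side, the local log structure at a boundary point of $\bM{A}$ provides, after passing to $\KN{-}$, polar coordinates $(|g_v|,\arg g_v)$ on each normal bundle of the corresponding boundary component, where $g_v$ is the section of \autoref{lem:eta-expression} defining that component. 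The explicit formula for $\Phi_A$ obtained from the bubble description will read these coordinates as follows: the magnitude $|g_v|$ corresponds to $r_v$, and $\arg g_v$ corresponds to the angle of the relative rotation between the configurations on the two sides of the node, via the tensor identity in \autoref{prop:s-factorization} applied to $s^A_{\infty,a}$. Matching these local descriptions shows that $\Phi_A$ is continuous across every boundary stratum, and then a straightforward induction on the depth of the tree completes the proof that $\Phi_A$ is a homeomorphism. Naturality in $A$ and compatibility with $\circ_\nu$ upgrade this to the asserted isomorphism of topological operads.
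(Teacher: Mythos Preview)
Your proposal is correct and follows essentially the same approach as the paper's own (sketch of) proof: both arguments recognize that, as set operads, $\FMtop{}$ and $\KN{\FM{}}$ are free on their respective smooth loci (the paper states this explicitly, you phrase it as the stratum-by-stratum construction via \autoref{rem:FM-as-free-set-operad}), extend the smooth isomorphism \eqref{eq:conf-to-FM} via this freeness, and then invoke compactness and Hausdorffness to upgrade a continuous bijection to a homeomorphism. Your version is in fact more detailed than the paper's, which declares the continuity check ``routine'' and omits it entirely, whereas you outline the local-coordinate verification using the polar description of $\KN{-}$ and the scale parameters $r_v$ on the Fulton--MacPherson side.
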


\begin{proof}[Sketch of proof]   
    We will follow a tradition in the literature and describe the resulting isomorphism of operads of sets, while omitting the routine verification of its compatibility with the topology.

    Note that by definition, the set operad $FM$ is the free operad generated by the sequence $\Conf_\bullet(\CC)/(\RR_{>0}\ltimes\CC)$. Also, \autoref{rem:FM-as-free-set-operad} implies that the set operad $\KN{\FM{}}$ is the free operad generated by the sequence $\KN{\FM{}|_{\M{}}}$. Therefore, \eqref{eq:conf-to-FM} extends uniquely to an isomorphism of set operads 
    \begin{equation}\label{eq:FM-to-KN-in-proof}
    FM \stackrel{\sim}{\to} \KN{\FM{}},
    \end{equation}
    and one checks that its components are continuous, hence homeomorphisms because all spaces involved are compact and Hausdorff.
\end{proof}

\begin{remark}
The isomorphism from \autoref{prop:conf-vs-fm} can be described explicitly as follows. Given a point of $\FMtop{A}$ consisting of a tree $\tau$ and a configuration $Z_{v} \in \Conf_{A(v)}(\CC)$ at each vertex $v$, we obtain a collection of smooth curves $C_v := (\PP^1,Z_v,\infty)$ which we decorate with the tangent ray spanned by the unit tangent vector $\RR_{>0} \cvf{1/z}|_\infty \subset T_\infty \PP^1$.  In the notation of \autoref{ex:gluing-rays}, this corresponds to the case in which all angles $\theta_j$ are equal to zero. Therefore, the image of $(\tau,(Z_v)_v)$ by \eqref{eq:FM-to-KN-in-proof} is the curve obtained by gluing the $C_v$ according to $\tau$ and decorating with the gluing ray $\RR_{>0}\cdot (\cvf{z}|_\nu\otimes \cvf{1/z}|_\infty)$ at every node.  Every point of $\KN{\FM{}}$ is represented by a decorated curve of this type, unique up to dilations and translations of the configurations $Z_v$.
\end{remark}

\subsection{de Rham cohomology, Arnol'd forms and formality}
Parallel to the ``Betti'' description above, we may give an explicit description of the de Rham algebra of $\FM{A}$ (even over $\Spec{\ZZ}$) as follows. 

\subsubsection{Forgetful maps}
Recall that if $A' \subset  A$ is a subset with at least two elements, we have a forgetful map $\bM{A} \to \bM{A'}$ defined by forgetting the location of the marked points in $A\setminus A'$ and stabilizing the resulting curve.  Over the locus of smooth curves, this identifies the tangent spaces at infinity on $\Crv{A}$ and $\Crv{A'}$, and thus it induces a virtual morphism $\FM{A} \to \FM{A'}$. 

\subsubsection{Logarithmic differentials}
In particular, if $\{a,b\}  \in A$ is a subset with two elements, we have a canonical forgetful map
\[
f_{\{a,b\}} : \FM{A} \to \FM{\{a,b\}}.
\]
Note that $\FM{\{a,b\}}$ is the log structure over $\bM{\{a,b\}}\cong \pt$ associated to the line $T_\infty\Crv{\{a,b\}}$.  A choice of $\ZZ$-linear coordinate $t$ on this tangent space gives an isomorphism $\FM{\{a,b\}} \to  \logpt$, so that
$
\forms[1]{}(\FM{\{a,b\}}) \cong \ZZ\cdot \dlog{t}.
$
Note that the generator $\dlog{t}$ is invariant under change of the phantom coordinate $t$.  We deduce that the pullback
\[
\omega_{\{a,b\}}  := f_{\{a,b\}}^*\ddlog{t} \in \forms[1]{}(\FM{A})
\]
is independent of the choice of $t$,  i.e.~it depends only on the choice of the two-element subset $\{a,b\}\subset A$ as suggested by the notation.  We can think of it as the ``logarithmic derivative of the universal tangential basepoint relative to the positions of $a$ and $b$''.

Now consider the pullback of these elements along the canonical virtual morphism
\[
\phi : \Conf_A(\AF^1) \to \FM{A}
\]
constructed in the previous section, sending the configuration $(z_a)_{a \in A}$ to the curve $(\PP^1,(z_a)_{a\in A},\infty)$ equipped with the tangential basepoint $e:=\cvf{1/z}|_\infty$.  The composition $f_{\{a,b\}} \circ \phi$ then sends a configuration $(z_a)_{a \in A}$ to the marked curve $(\PP^1,z_a,z_b,\infty)$ with the same tangential basepoint $e$.  Translating by $-z_a$ and dilating by $\frac{1}{z_b-z_a}$, the latter curve is isomorphic to  $(\PP^1,0,1,\infty)$ with the tangential basepoint $(z_b-z_a)e$.  Taking the phantom coordinate $t$ dual to $e$, we deduce that 
\[
\phi^* f_{\{a,b\}}^* t = z_b-z_a
\]
and therefore
\[
\phi^*\omega_{\{a,b\}} = \frac{\dd(z_b-z_a)}{z_b-z_a} \in \forms[1]{\Conf_A(\AF^1)}.
\]
Note that these differential forms are exactly the ones considered by Arnol'd in \cite{Arnold1969}, where he proved that (after dividing each by $\tipi$), the subalgebra they generate projects isomorphically to the integral cohomology of the configuration space $\Conf_A(\CC)$.  

\subsubsection{Formality}
\label{sec:formality}
 Recall that the functor $X \mapsto \sect{X,\forms{X/\KK}}$ of global log forms over a field $\KK$ is strong  symmetric monoidal.  Thus, by base change of $\FM{}$ to $\QQ$, we  obtain a differential graded Hopf cooperad $\Omega^\bullet(\FM{}) := \sect{\FM{}/\QQ,\forms{\FM{}/\QQ}}$. It turns out that its differential vanishes:

\begin{proposition}
The following statements hold:
\begin{enumerate}
    \item All global logarithmic forms on $\FM{}$ are closed, and the natural map
    \begin{equation}\label{eq:formality-FM-de-Rham}
    \Omega^\bullet(\FM{})\to \HdR{\FM{}}
    \end{equation}
    is an isomorphism of graded Hopf cooperads.
    \item The elements $\omega_{a,b} \in \forms[1]{}(\FM{A})$ generate $\forms{}(\FM{A})$ as an algebra over $\QQ$.  
\end{enumerate}
\end{proposition}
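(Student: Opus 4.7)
The strategy is to combine the universal virtual morphism $\phi \colon \Conf_A(\AF^1) \to \FM{A}$ with Arnol'd's classical computation of $H^\bullet(\Conf_A(\CC);\QQ)$ and the Kato--Nakayama comparison. I first observe that each generator $\omega_{\{a,b\}}$ is closed, since it is the pullback of $\ddlog{t}$ on $\FM{\{a,b\}} \cong \logpt$ via $f_{\{a,b\}}$, and $\dd(\ddlog{t}) = 0$ is part of the definition of the log de Rham differential. Let $\mathcal{A}^\bullet \subseteq \Omega^\bullet(\FM{A}/\QQ)$ denote the $\QQ$-subalgebra generated by the $\omega_{\{a,b\}}$; it consists of closed forms, so passing to cohomology classes yields a morphism of graded $\QQ$-algebras $\mathcal{A}^\bullet \to \HdR{\FM{A}/\QQ}$.

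Next I would identify the target with the Orlik--Solomon (Arnol'd) $\QQ$-algebra. By \autoref{prop:conf-vs-fm} and the contractibility of $\RR_{>0}\ltimes \CC$, the space $\KN{\FM{A}} \cong \FMtop{A}$ is homotopy-equivalent to $\Conf_A(\CC)$, so the Betti--de Rham comparison (\autoref{thm:BdR}) yields
\[
\HdR{\FM{A}/\QQ}\otimes_\QQ \CC \;\cong\; \HB{\FM{A}}\otimes_\ZZ \CC \;\cong\; H^\bullet(\Conf_A(\CC);\CC).
\]
Arnol'd's theorem shows the right-hand side is generated as an algebra by the classes $[\dlog{(z_b-z_a)}]$, and since $\phi^*\omega_{\{a,b\}} = \dlog{(z_b-z_a)}$, the classes of the $\omega_{\{a,b\}}$ generate $\HdR{\FM{A}/\QQ}$ over $\QQ$. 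Hence the map $\mathcal{A}^\bullet \to \HdR{\FM{A}/\QQ}$ is surjective.

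The central step is to show that the natural map $\Omega^\bullet(\FM{A}/\QQ) \to \HdR{\FM{A}/\QQ}$ is itself an isomorphism; combined with the surjection from $\mathcal{A}^\bullet$, this forces $\mathcal{A}^\bullet = \Omega^\bullet(\FM{A}/\QQ)$ and $d=0$ on all global forms. The key Hodge-theoretic input is the purity of the mixed Hodge structure on $H^p(\Conf_A(\CC);\QQ)$, which is of Tate type $(p,p)$ because the Orlik--Solomon generators are log classes. Purity implies $F^p H^p = H^p$ and $F^{p+1}H^p = 0$, from which the (degenerating) log Hodge-to-de-Rham spectral sequence on the log smooth scheme $\FM{A}/\QQ$ gives the vanishing $H^q(\FM{A}, \forms[p]{\FM{A}/\QQ}) = 0$ for $q > 0$, and hence $\Omega^p(\FM{A}/\QQ) = H^0(\FM{A}, \forms[p]{\FM{A}/\QQ}) \xrightarrow{\sim} \HdR[p]{\FM{A}/\QQ}$. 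The Hopf cooperad compatibility is then formal: the virtual operadic compositions $\circ_\nu$ pull back forms, and \autoref{prop:s-factorization} provides an explicit decomposition of $\circ_\nu^*\omega_{\{a,b\}}$ on the factors matching the classical Arnol'd cooperad structure.

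The principal obstacle is verifying the Hodge-theoretic input in the log setting: one needs a log Deligne--Illusie-type degeneration for the Deligne--Faltings log scheme $\FM{A}/\QQ$ together with a computation of the Hodge filtration on $\HdR{\FM{A}/\QQ}$ compatible with the purity of $H^\bullet(\Conf_A(\CC);\QQ)$. A more hands-on alternative, avoiding Hodge theory altogether, proceeds by induction on the partial order of strata of $\bbM{A}$: the recursive structure from \autoref{prop:s-factorization} and the Deligne--Faltings local description of $\forms[1]{\FM{A}/\QQ}$ reduce the problem to writing every global log form as a polynomial in the $\omega_{\{a,b\}}$, with the base case handled by Arnol'd's direct description of $\Omega^\bullet(\Conf_A(\AF^1)/\QQ)$.
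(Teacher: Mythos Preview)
Your overall logic is sound and would work if the Hodge-theoretic input you flag as the ``principal obstacle'' could be established directly for the Deligne--Faltings log scheme $\FM{A}$. However, the paper sidesteps this obstacle entirely with a trick you have missed: the line bundle $\Tinf{A}$ is trivial over the open locus $\M{A}$ (a cross-ratio provides an explicit trivialization), and by \autoref{ex:automorphisms-DF-log-structures} this yields a \emph{virtual} isomorphism
\[
\FM{A} \;\cong\; (\bM{A},\bbM{A}) \times \logpt
\]
of log schemes over $\bM{A}$. Since the underlying scheme is reduced, \autoref{lem:differentiability} shows this virtual isomorphism identifies the log de Rham complexes. The question then reduces to the same statement for the two factors: for $\logpt$ it is trivial, and for $(\bM{A},\bbM{A})$ it is the classical fact (from purity of the mixed Hodge structure on $\coH{\M{A}}$) that global log forms on $\bM{A}$ with poles along $\bbM{A}$ are all closed and compute $\HdR{\M{A}}$. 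Part (2) then follows from Arnol'd, exactly as you say.

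The contrast is instructive: you try to run the purity/degeneration argument directly on $\FM{A}$, which would require a log Hodge theory for ideally smooth (not log smooth) Deligne--Faltings schemes that is not readily available; the paper instead uses the flexibility of virtual morphisms to replace $\FM{A}$ by a product whose factors are already handled by classical theory. This is precisely the kind of simplification that virtual morphisms are designed to enable.
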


\begin{proof}
The vector bundle $T_\infty\Crv{A}$ on $\bM{A}$ is trivial in the interior $\M{A}$. Indeed, choosing two distinct marked points $p,q\in A$, the cross-ratio $f(z)= [z p | \infty q]$ is a global function on $\Crv{A}|_{\M{A}}$ whose vanishing locus is $\infty$, so that its fibrewise differential trivializes $\Tinf{A}|_{\M{A}}$. Therefore, by \autoref{ex:automorphisms-DF-log-structures} we get a virtual isomorphism of log schemes
\[
\FM{A} \cong (\bM{A},\bbM{A})\times \logpt.
\]
It is a standard fact that all global log forms on $(\bM{A},\bbM{A})$ are closed and that the natural map
\[
\Omega^\bullet(\bM{A},\bbM{A})\to \HdR{\bM{A},\bbM{A}}\cong \HdR{\M{A}}
\]
is an isomorphism, as a classical consequence of the fact that the mixed Hodge structure on $\coH{\M{A}}$ is pure of weight $2k$ for all $k$. The same holds for $\logpt$, and statement (1) follows. Statement (2) then follows from Arnol'd's computation of the de Rham cohomology of the configuration spaces $\Conf_A(\AF^1)$.
\end{proof}

On the other hand, the cochains
\[
\rsect{\KN{\FM{}};\CC} \cong \rsect{\FMtop{};\CC}
\]
form a weak dg Hopf cooperad; see, e.g.~\cite[\S8.5]{CiriciHorelMHS} for a precise definition of this concept.  
Applying  Betti and de Rham cochains and their comparison equivalence, the isomorphism \eqref{eq:formality-FM-de-Rham} fits into the following diagram of homotopy equivalences  of weak dg Hopf co-operads:
\begin{equation}
\begin{tikzcd}
\Omega^\bullet(\FM{})\otimes_\QQ\CC \ar[r,"\cong"] \ar[d,"\sim"] & \HdR{\FM{}}\otimes_\QQ\CC 
    \ar[dd,"\cong"] \\
    \rsect{\FM{},\forms{\FM{}/\QQ}} \otimes_\QQ \CC \ar[d,"\sim"]\\
    \rsect{\KN{\FM{}};\CC} & \HB{\FM{}}\otimes_\QQ\CC \ar[l,dashed]
\end{tikzcd}\label{eq:formality-square}
\end{equation}
The dashed horizontal map is the one that makes the diagram commute; it induces the identity on cohomology and thus establishes the formality of the weak dg Hopf cooperad  $\rsect{\KN{\FM{}};\CC} \cong \rsect{\FMtop{};\CC}$.   Combined with the homotopy equivalence between $\FMtop{}$ and the little disks operad~\cite{KontsevichOperadsMotives,LambrechtsVolic,Markl1999,Salvatore2001}, we arrive at the following statement, known in the literature as ``formality of little disks''; see the introduction for a discussion of its history.

\begin{corollary}
The weak dg Hopf co-operad of cochains on the little disks operad, with $\CC$-coefficients, is formal.
\end{corollary}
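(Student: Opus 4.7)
The plan is to read the corollary directly off the commutative diagram \eqref{eq:formality-square}, which has essentially already been assembled. Formality of a weak dg Hopf cooperad $A^\bullet$ means the existence of a zig-zag of quasi-isomorphisms of weak dg Hopf cooperads between $A^\bullet$ and its cohomology $\coH{A^\bullet}$ viewed as a dg cooperad with zero differential. I will produce such a zig-zag for $\rsect{\FMtop{};\CC}$, then transfer the result to the little disks operad via the known homotopy equivalence $\FMtop{}\simeq$ little disks.

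First, I would verify that every arrow of \eqref{eq:formality-square} is a morphism of weak dg Hopf cooperads. The operadic structure on $\FM{}$ in $\smWLogSch_\ZZ$ (constructed earlier in \autoref{sec:little-disks}) makes both $\forms{\FM{}/\QQ}$ and the derived global sections $\rsect{\FM{}, \forms{\FM{}/\QQ}}$ into dg Hopf cooperads, using that derived global sections are lax symmetric monoidal (up to homotopy) and that $\forms{}$ is a strong symmetric monoidal functor on $\smWLogSch_\KK$ by \autoref{thm:BdR}\eqref{it:deRham}. Similarly, the Kato--Nakayama spaces $\KN{\FM{}}$ form a topological operad (since $\KN{-}$ preserves products) and hence $\rsect{\KN{\FM{}};\CC}$ is a weak dg Hopf cooperad. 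Kato--Nakayama's comparison (naturally extended to virtual morphisms as in \autoref{sec:comparison}) provides the lower vertical quasi-isomorphism of weak dg Hopf cooperads, and the top horizontal isomorphism is the one established just before the corollary.

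Next, I would observe that $\Omega^\bullet(\FM{})$ carries the zero differential by the proposition immediately preceding, so as a dg object it coincides with its cohomology $\HdR{\FM{}}$. The zig-zag
\[
\rsect{\KN{\FM{}};\CC} \xleftarrow{\sim} \rsect{\FM{},\forms{\FM{}/\QQ}}\otimes_\QQ\CC \xleftarrow{\sim} \Omega^\bullet(\FM{})\otimes_\QQ\CC \xrightarrow{\cong} \HdR{\FM{}}\otimes_\QQ\CC
\]
of quasi-isomorphisms of weak dg Hopf cooperads, combined with the de Rham--Betti comparison $\HdR{\FM{}}\otimes_\QQ\CC \cong \HB{\FM{}}\otimes_\QQ\CC$, then realizes the dashed arrow of \eqref{eq:formality-square}, which by construction induces the identity in cohomology. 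This is exactly formality of $\rsect{\KN{\FM{}};\CC} \cong \rsect{\FMtop{};\CC}$ as a weak dg Hopf cooperad. Finally, the classical homotopy equivalence of topological operads between $\FMtop{}$ and the little disks operad (see \cite{KontsevichOperadsMotives,LambrechtsVolic,Markl1999,Salvatore2001}) transports this formality to the little disks cochains.

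The main obstacle is a bookkeeping rather than a mathematical one: ensuring that the zig-zag lives in weak dg Hopf cooperads and not merely in sequences of dg algebras. This requires care because the functor $\rsect{-}$ is only lax symmetric monoidal and only up to coherent homotopy, which forces one to work in a homotopically meaningful category of weak dg Hopf cooperads (such as in \cite[\S8.5]{CiriciHorelMHS}); once this framework is fixed, however, all four functors appearing in \eqref{eq:formality-square} are symmetric monoidal in the appropriate sense, and the verification reduces to checking compatibility with the operadic compositions $\FM{A\sqcup\{\nu\}}\times\FM{B}\to\FM{A\sqcup B}$ constructed in \autoref{sec:operads-moduli-spaces}, which is automatic from naturality of all the constructions involved.
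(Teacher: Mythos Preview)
Your proposal is correct and follows essentially the same approach as the paper: the argument in the paper is precisely the diagram \eqref{eq:formality-square} you describe, together with the observation that the dashed arrow induces the identity on cohomology, followed by the invocation of the homotopy equivalence $\FMtop{}\simeq$ little disks. Your discussion of the bookkeeping around weak dg Hopf cooperads is a faithful elaboration of what the paper leaves implicit.
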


\begin{remark}
There is a canonical integral structure  $\forms{}(\FM{})_\ZZ \subset \Omega^\bullet(\FM{})$ given by the global log forms on $\FM{}$ defined over $\ZZ$, or equivalently the $\ZZ$-subalgebra generated by the forms $\omega_{\{a,b\}}$. With this choice of integral structure, the Betti--de Rham comparison gives an isomorphism  $\forms[k]{}(\FM{})_\ZZ  \cong \HB[k]{\FM{};\ZZ (\tipi)^k}$, where the factor $(\tipi)^k$ appears in degree $k$ because the integral of $\ddlog{z_a-z_b}$ over a loop is a multiple of $\tipi$.  Hence the comparison isomorphism in cohomology can be renormalized to be defined over $\ZZ$.  (The corresponding statement for $\FMfr{}$ was observed also by Vaintrob~\cite{Vaintrob2021}.)  At cochain level, the situation is more subtle, since the integral of $\ddlog{z_a-z_b}$ along a non-closed path could be any complex number.  However, using sheaves of polylogarithms as in \cite{BPP,Brown2009,Goncharov2001}, one can show that the formality morphism \eqref{eq:formality-square} with $\CC$-coefficients can be lifted, in a canonical way, to one whose coefficients are multiple zeta values.
\end{remark}

\begin{remark}
Under the identification of $\forms{}(\FM{})$ with Arnol'd's forms on the configuration spaces $\Conf(\AF^1)$, the cooperadic cocomposition amounts to a canonical way to pull back the forms $\tfrac{\dd(z_i-z_j)}{z_i-z_j}$ the loci of collisions $z_i=z_j$, even though they have poles there.  Note that the existence of such a ``regularized pullback'' is a consequence of the homotopy equivalence of the configuration spaces with the spaces of little disks and Arnol'd's presentation of $\coH{\Conf(\AF^1)}$.  What is new here is that our formalism gives a direct algebro-geometric construction of the regularized pullback, which  implies the formality of little disks.
\end{remark}

\subsection{The operad of integral points}
Note that if $X$ is an operad of log schemes over $\ZZ$, then by taking virtual $\ZZ$-points we obtain an operad $X(\ZZ)$ in the category of sets; the operadic structure is naturally induced from that of $X$ because the functor of virtual points is symmetric monoidal.  Our aim now is to give a purely combinatorial description of the operads of virtual $\ZZ$-points $\FM{}(\ZZ)$.

\subsubsection{Binary trees}

Let $A$ be a finite set with $|A|\geq 2$. Consider the subscheme $\overline{\mathfrak{M}}_A^{\mathrm{bin}}\subset \bM{A}$ consisting of those stable marked curves $C$ whose dual graph $\tau(C)$ is a \emph{binary} tree, or in other words such that all the irreducible components of $C$ have exactly three ``special points'' (nodes and/or markings). It is the intersection of $|A|-2$ irreducible components of the divisor $\bbM{A}$, and is a disjoint union of copies of $\Spec{\ZZ}$ indexed by all binary rooted trees whose leaves are labelled by $A$ and whose root is labelled $\infty$.

\begin{proposition}\label{prop:virtual-Z-points-of-FM-are-binary}
Any virtual $\ZZ$-point of $\FM{A}$ has an underlying morphism which factors through $\overline{\mathfrak{M}}_A^{\mathrm{bin}}(\ZZ)$.
\end{proposition}

\begin{proof}
Since there is an ordinary morphism of log schemes from $\FM{A}$ to the divisorial log scheme $(\bM{A},\bbM{A})$ whose underlying morphism of schemes is the identity, it is enough to prove the analogous statement for $(\bM{A},\bbM{A})$.

Let $s:\Spec{\ZZ}\to (\bM{A},\bbM{A})$ be a virtual morphism of log schemes. For $S\subset A$ with $|S|=3$, the forgetful map $\bM{A}\to \bM{S}$ lifts to an ordinary morphism of log schemes $f_S\colon (\bM{A},\bbM{A})\to (\bM{S},\bbM{S})\cong (\PP^1_\ZZ,\{0,1,\infty\})$. \autoref{lem:virtual-Z-points-of-projective-line-three-points} then implies that $s\circ f_S$ has an underlying morphism which lands at one of the three $\ZZ$-points $0, 1,\infty$, i.e., in the locus $\bbM{S}$ of singular marked curves. This implies the claim since for a closed point $C$ of $\bM{A}$ whose dual graph is \emph{not} a binary tree, there exists a $3$-element subset $S\subset A$ such that the forgetful map $\bM{A}\to \bM{S}$ sends $C$ to a smooth marked curve.
\end{proof}

\subsubsection{Tangential basepoints and orderings}

We have a projection $\FM{A} \to \bM{A}$, defined by forgetting the log structure.  For $C \in \overline{\mathfrak{M}}_A^{\mathrm{bin}}(\ZZ)$, a lift of $C$ to an element of $\FM{A}(\ZZ)$ is equivalent to the specification of a tangential basepoint at $\infty$ on $C$ and a gluing datum at every node, all defined over $\Spec{\ZZ}$.  By transport of tangential basepoints through the nodes, this is equivalent to specifying, for each component $C_i$ of $C$, a tangential basepoint at the special point of  $C_i$ that is closest to $\infty$, which we denote by $\infty_i\in C_i(\ZZ)$.

Note that for each component $C_i$, the real locus $C_i(\RR) \cong \PP^1(\RR)$ (with the analytic topology) is a topological circle. This circle comes equipped with an orientation, obtained by declaring that the tangential basepoint at $\infty_i$ points in the positively oriented direction.  In terms of the dual graph $\tau(C)$, this corresponds to a cyclic order on the edges at each internal vertex of $\tau(C)$, which in turn gives the tree a planar structure $\rho(v)$.  Namely, the planar structure is determined by requiring that in a planar embedding, the edges are traversed counterclockwise in cyclic order; see \autoref{fig:trees} for an example.

In summary, we have the following combinatorial description of the operad $\FM{}(\ZZ)$.
\begin{proposition}\label{prop:planar-trees}
    The assignment $(C,v) \mapsto (\tau(C),\rho(v))$ gives an isomorphism from the operad $\FM{}(\ZZ)$ to the operad of planar rooted trees.  The forgetful map $\FM{}(\ZZ) \to \bM{}(\ZZ)$ is identified with the map $(\tau(C),\rho(v))\mapsto \tau(C)$ that forgets the planar structure.
\end{proposition}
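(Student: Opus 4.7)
My plan is to classify the lifts of each integral point $C \in \bM{A}(\ZZ)$, equivalently a binary rooted tree $\tau(C)$ by the preceding proposition, to virtual $\ZZ$-points of $\FM{A}$, and match them with planar refinements of $\tau(C)$. As explained in the paragraph preceding the statement, such a lift is equivalent to the choice of a tangential basepoint over $\ZZ$ at the special point $\infty_i$ on each irreducible component $C_i$ of $C$: starting from the tangential basepoint at $\infty$ and the gluing data at the nodes, one recovers the basepoints at each $\infty_i$ by iterated application of the universal transport maps of \autoref{cor:transport-tangential-basepoints} and inversion via the gluing isomorphisms across the nodes. These operations preserve $\ZZ$-integral structures because they are given by formulae defined over $\ZZ$.

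Next, I identify such data with planar structures vertex by vertex. Each $C_i$ is a copy of $\PP^1_\ZZ$ carrying exactly three $\ZZ$-rational special points, namely $\infty_i$ and two others. Once a $\ZZ$-linear coordinate near $\infty_i$ is fixed, a tangential basepoint over $\ZZ$ at $\infty_i$ is an element of $\ZZ^\times = \{\pm 1\}$, and it endows the real circle $C_i(\RR) \cong \PP^1(\RR)$ with an orientation (the one in which the basepoint points positively). That orientation gives a cyclic ordering of the three special points of $C_i$, which under the canonical bijection between the special points of $C_i$ and the edges meeting the corresponding internal vertex $v_i$ of $\tau(C)$ becomes a cyclic ordering of the edges at $v_i$, i.e.~a planar structure there. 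This assignment is a bijection: both sides have exactly two elements (the number $(3-1)!=2$ of cyclic orderings of three edges matches $|\ZZ^\times| = 2$), and reversing the sign of the basepoint reverses the orientation of the circle, hence toggles the cyclic order. Assembling over all internal vertices produces the bijection $v \leftrightarrow \rho(v)$.

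Finally, I verify operadic compatibility. Under the composition $\circ_\nu \colon \FM{A\sqcup\{\nu\}}(\ZZ) \times \FM{B}(\ZZ) \to \FM{A\sqcup B}(\ZZ)$, the tangential basepoint on each component of $C = C_1\sqcup_\nu C_2$ away from the new node is inherited unchanged from either $v_1$ or $v_2$; by the construction of the virtual morphism $\circ_\nu$ recalled in \autoref{sec:operads-moduli-spaces}, the new gluing datum at $\nu$ is precisely such that the basepoint on the root component of $C_2$ (whose point $\infty_i$ inside $C$ is now the node $\nu$) coincides with the basepoint that $v_2$ had at $\infty \in C_2$. Consequently, $\rho(v_1 \circ_\nu v_2)$ at every internal vertex of the dual tree is inherited unchanged from the planar structure at the corresponding vertex of $\tau(C_1)$ or $\tau(C_2)$, which is exactly the operadic grafting of planar rooted trees. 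The statement about the forgetful map is then immediate.

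The hardest step should be the bookkeeping of the universal transport at the level of $\ZZ$-integral tangential basepoints in the last paragraph, so that the planar structure on the root component of $C_2$ really does come from $v_2$ rather than being twisted by the gluing. This is tractable because each component is $\PP^1_\ZZ$ and the universal transport is governed by the explicit inversion formulae of \autoref{ex:transport-tangential-basepoints-easy}, combined with the factorization property of \autoref{prop:s-factorization}.
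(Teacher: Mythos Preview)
Your proposal is correct and follows essentially the same approach as the paper: the proposition is stated as a summary of the preceding discussion, which already explains that lifts of $C$ to $\FM{A}(\ZZ)$ correspond to tangential basepoints at each $\infty_i$, and that such a basepoint orients $C_i(\RR)$ and hence gives a cyclic order at each vertex. You add the explicit count $|\ZZ^\times|=2=(3-1)!$ and a verification of operadic compatibility via the construction of $\circ_\nu$ and \autoref{prop:s-factorization}, which the paper leaves implicit; this is a reasonable elaboration rather than a different route.
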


\begin{figure}
    \centering
    \begin{subfigure}{0.3\textwidth}\centering
        \begin{tikzpicture}
        \draw(0,0) circle (0.5);
        \draw[fill] (30:0.5) circle (0.05);
        \draw (30:0.75) node {$c$};
        \draw[fill] (150:0.5) circle (0.05);
        \draw (150:0.75) node {$a$};
        \draw[fill] (0,-0.5) circle (0.05);
        \draw[->] (0,-0.5) -- (0.4,-0.5);
        \draw(0,-1) circle (0.5);
        \draw[fill] (0,-1.5) circle (0.05);
        \draw[->] (0,-1.5) -- (0.4,-1.5);
        \draw (0,-1.7) node {$\infty$};
        \draw[fill] (-0.5,-1) circle (0.05);
        \draw (-0.7,-1) node {$b$};
    \end{tikzpicture}
    \end{subfigure}
    \begin{subfigure}{0.3\textwidth}\centering
        \begin{tikzpicture}[scale=0.5]
  \node {$\infty$} [grow'=up]
    child { 
        child {node {$b$}}
        child {
            child {node {$a$}}
            child {node {$c$}}
        }
    };
\end{tikzpicture}
    \end{subfigure}
    \begin{subfigure}{0.3\textwidth}\centering
        \[(b(ac))\]
    \end{subfigure}
    
    \caption{Virtual $\ZZ$-points of $\FM{A}$ are in bijection with planar rooted trees, or equivalently parenthesized monomials built from the elements of $A$; the correspondence is illustrated here with the set of labels $A = \{a,b,c\}$.}
    \label{fig:trees}
\end{figure}
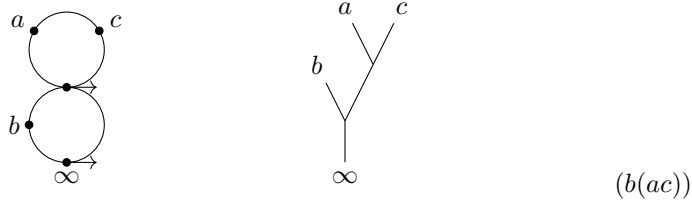

\subsection{The parenthesized braid operad}
Finally, we explain how Bar-Natan's formalism of parenthesized braids from \cite[\S2]{Bar-Natan1998} can be interpreted as the ``fundamental groupoid of $\FM{A}$ with basepoints defined over $\Spec{\ZZ}$''.

By the discussion in  \autoref{sec:KN-points-over-subrings}, we have a canonical injection
\[
\FM{A}(\ZZ)\cong \KN[\ZZ]{\FM{A}}\hookrightarrow \KN{\FM{A}} \cong \FMtop{A}.
\]
This allows us to view $\FM{A}(\ZZ)$ as a collection of basepoints in $\FMtop{A}$.  

\begin{definition}
    We denote by $\Pi_1(\FM{A}) := \Pi_1(\FMtop{A},\FM{A}(\ZZ))$ the fundamental groupoid of the topological space $\FMtop{A}$ based at the subset $\FM{A}(\ZZ)$.  
\end{definition}
Since the inclusion of $\ZZ$-points in the Kato--Nakayama space is a morphism of symmetric monoidal functors, the operadic structure of $\FM{}$ makes $\Pi_1(\FM{})$ into an operad of groupoids.  

In light of \autoref{prop:planar-trees}, the operad of basepoints of $\Pi_1(\FM{A})$ is the operad of planar rooted trees.  Recall that a planar rooted tree with leaves labelled by $A$ can equivalently be encoded in a parenthesized monomial built from the elements of $A$, as illustrated in \autoref{fig:trees}.  Namely, the planar structure induces a total order on the elements of $A$, given by traversing the leaves clockwise.  We then interpret each internal vertex as the bracketing of its edges pointing away from the root.  In particular, if $A = \{1,\ldots,n\}$, then this structure is what Bar-Natan calls a parenthesized permutation.  

The projection $\Conf_A(\CC)\to \FMtop{A}\cong \KN{\FM{A}}$ is a homotopy equivalence, so the fundamental group of $\FMtop{A}$ is the pure braid group on strands labelled by $A$.  More generally if $x,y \in \FM{A}(\ZZ)$, then comparing the total orders on $A$ induced by $x,y$ induces a bijection $\sigma_{x,y} \colon A \to A$, and homotopy classes of paths from $x$ to $y$ are in canonical bijection with lifts of $\sigma_{x,y}$ to an element of the braid group on strands labelled by $A$.  Thus the groupoids $\Pi_1(\FM{A})$ are exactly Bar-Natan's groupoids of parenthesized braids, which were observed by Tamarkin~\cite{TamarkinDisks} to form an operad. The operadic compositions clearly agree with those of $\Pi_1(\FM{})$.  Hence we have the following.

\begin{proposition}
    The operad of groupoids $\Pi_1(\FM{})$ is canonically isomorphic to the operad of parenthesized braids.
\end{proposition}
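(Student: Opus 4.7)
The plan is to assemble the observations already collected in the preceding paragraphs into a proof that identifies the operad $\Pi_1(\FM{})$ with Bar-Natan's parenthesized braid operad $\mathsf{PaB}$ as operads of groupoids. The argument naturally splits into three steps: identification of objects, identification of morphisms, and compatibility of operadic compositions.

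First, the objects. By \autoref{prop:planar-trees}, the set $\FM{A}(\ZZ)$ of objects of $\Pi_1(\FM{A})$ is in canonical bijection with planar rooted trees whose leaves are labelled by $A$ and whose root is labelled $\infty$. Under the standard dictionary recalled above \autoref{fig:trees}, such a planar tree is equivalent data to a parenthesized monomial in the elements of $A$, which for $A=\{1,\ldots,n\}$ is exactly Bar-Natan's notion of a parenthesized permutation. This gives the identification of objects at the level of each arity, and by construction it is compatible with the grafting operation, i.e.\ with the operadic structure on objects.

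Next, the morphisms. Since the projection $\Conf_A(\CC)\to \FMtop{A}\cong \KN{\FM{A}}$ is a homotopy equivalence (as used in \autoref{sec:little-disks}), the fundamental group of $\FMtop{A}$ at any basepoint is canonically isomorphic to the pure braid group on strands labelled by $A$. For two basepoints $x,y\in \FM{A}(\ZZ)$, I would compare the induced total orders on $A$ read off from the two planar structures, giving a bijection $\sigma_{x,y}\colon A\to A$; a standard change-of-basepoint argument then identifies the set of homotopy classes of paths from $x$ to $y$ in $\FMtop{A}$ with the set of braids on $A$-labelled strands whose underlying permutation equals $\sigma_{x,y}$. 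This is precisely the set of morphisms $x\to y$ in $\mathsf{PaB}$.

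Finally, I would verify compatibility with the operadic structure. The operadic composition on $\Pi_1(\FM{})$ is inherited from the operadic structure of $\FM{}$ via the symmetric monoidal functor $\KN{-}$ and the symmetric monoidal fundamental-groupoid functor $\Pi_1$; on objects it is the grafting of planar trees, which matches Tamarkin's substitution rule for parenthesized permutations. On morphisms, it is induced by the topological operadic composition of $\FMtop{}$ from \autoref{prop:conf-vs-fm}, which on fundamental groupoids is well known to realize the cabling/insertion operation on braids used to define operadic composition in $\mathsf{PaB}$. Assembling the bijection of objects with this identification of morphisms yields the desired isomorphism of operads of groupoids. The main obstacle is the last step: while each ingredient is documented in the literature (Tamarkin's operadic structure on parenthesized braids, the Fulton--MacPherson/little-disks comparison, and the equivalence between insertion of braids and operadic composition on $\FMtop{}$), checking that the canonical identifications of the first two steps intertwine these compositions requires tracing through the transport-of-tangential-basepoints recipe defining $\circ_\nu$ on $\FM{}$ and verifying that, after applying $\KN{-}$ and passing to fundamental groupoids, it reproduces exactly the cabling of braids along a chosen parenthesization; the computation in \autoref{ex:gluing-rays} together with the explicit description of the isomorphism \eqref{eq:FM-to-KN-in-proof} given after \autoref{prop:conf-vs-fm} is the key input that makes this matching transparent.
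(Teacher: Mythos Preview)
Your proposal is correct and follows essentially the same approach as the paper. The paper treats this proposition as a summary of the preceding discussion rather than giving a separate proof: it identifies objects via \autoref{prop:planar-trees}, identifies morphisms via the homotopy equivalence $\Conf_A(\CC)\to\FMtop{A}$ and the change-of-basepoint argument you describe, and then simply asserts that ``the operadic compositions clearly agree''; your third step spells this out in somewhat more detail than the paper does, but the structure and content are the same.
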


\bibliographystyle{hyperamsalpha}
\bibliography{virtual-mor-biblio}

\end{document}